\title[Pareto Record Frontier]
{The Pareto Record Frontier}
\newcommand\urladdrx[1]{{\urladdr{\def~{{\tiny$\sim$}}#1}}}
\author{James Allen Fill}
\address{Department of Applied Mathematics and Statistics,
The Johns Hopkins University,
3400 N.~Charles Street,
Baltimore, MD 21218-2682 USA}
\email{jimfill@jhu.edu}
\thanks{Research for both authors supported by
the Acheson~J.~Duncan Fund for the Advancement of Research in
Statistics.}
\author{Daniel~Q.\ Naiman}
\address{Department of Applied Mathematics and Statistics,
The Johns Hopkins University,
3400 N.~Charles Street,
Baltimore, MD 21218-2682 USA}
\email{daniel.naiman@jhu.edu}
\keywords{Multivariate records, Pareto records, record-setting region, width of frontier, current records, broken records, maxima, extreme value theory, boundary-crossing probabilities, time change}
\subjclass[2010]{Primary:\ 60D05; Secondary:\ 60F05, 60F15, 60G70, 60G17}
\numberwithin{equation}{section}
\theoremstyle{plain}
\newtheorem{theorem}{Theorem}[section]
\newtheorem{lemma}[theorem]{Lemma}
\newtheorem{proposition}[theorem]{Proposition}
\newtheorem{corollary}[theorem]{Corollary}
\theoremstyle{definition}
\newtheorem{example}[theorem]{Example}
\newtheorem{definition}[theorem]{Definition}
\newtheorem{remark}[theorem]{Remark}
\newtheorem*{acks}{Acknowledgments}
\theoremstyle{remark}
\newenvironment{romenumerate}[1][-10pt]{
\addtolength{\leftmargini}{#1}\begin{enumerate}
 }{\end{enumerate}}
\newcounter{oldenumi}
{\setcounter{oldenumi}{\value{enumi}}
\begin{romenumerate} \setcounter{enumi}{\value{oldenumi}}}
{\end{romenumerate}}
\newcounter{thmenumerate}
\newcounter{xenumerate}   
\newcommand{\refT}[1]{Theorem~\ref{#1}}
\newcommand{\refC}[1]{Corollary~\ref{#1}}
\newcommand{\refL}[1]{Lemma~\ref{#1}}
\newcommand{\refR}[1]{Remark~\ref{#1}}
\newcommand{\refS}[1]{Section~\ref{#1}}
\newcommand{\refP}[1]{Proposition~\ref{#1}}
\newcommand{\refD}[1]{Definition~\ref{#1}}
\newcommand{\refE}[1]{Example~\ref{#1}}
\newcommand{\refF}[1]{Figure~\ref{#1}}
\newcommand\marginal[1]{\marginpar{\raggedright\parindent=0pt\tiny #1}}
\newcommand\REM[1]{{\raggedright\texttt{[#1]}\par\marginal{XXX}}}
\xdef\klockan{\the\count1.0\the\count255}
\xdef\klockan{\the\count1.\the\count255}\fi
\newcommand\nopf{\qed}   
\def\rompar(#1){\textup(#1\textup)}    
\def\xexp(#1){e^{#1}}
\newcommand\half{\tfrac12}
\newcommand\punkt{.\spacefactor=1000}    
\newcommand\iid{i.i.d\punkt}
\newcommand\ie{i.e\punkt}
\newcommand\eg{e.g\punkt}
\newcommand{\as}{a.s\punkt}
\newcommand{\io}{i.o\punkt}
\renewcommand{\aa}{a.a\punkt}
\newcommand{\tend}{\longrightarrow}
\newcommand\Pto{\overset{\mathrm{P}}{\tend}}
\newcommand\Lcto{\overset{\mathcal{L}}{\tend}}
\newcommand\asto{\overset{\mathrm{a.s.}}{\tend}}
\newcommand\bbR{\mathbb R}
\newcommand\bbZ{\mathbb Z}
\newcounter{CC}
\newcounter{cc}
\newcommand\E{\operatorname{\mathbb E{}}}
\renewcommand\P{\operatorname{\mathbb P{}}}
\renewcommand\L{\operatorname{L}}
\newcommand\Var{\operatorname{Var}}
\newcommand\gl{\lambda}
\newcommand\tB{{\widetilde F}}
\newcommand\tC{{\widetilde C}}
\newcommand\tG{{\widetilde G}}
\newcommand\tW{{\widetilde W}}
\newcommand\Bh{B}
\newcommand\doi{D_{01}}
\newcommand\tr{\tilde r}
\newcommand{\ignore}[1]{}
\pgfplotsset{compat=1.3}
\begin{document}

\date{January~25, 2019}

\maketitle

\begin{abstract}
For \iid\ $d$-dimensional observations $X^{(1)}, X^{(2)}, \ldots$ with independent Exponential$(1)$
coordinates, consider the boundary (relative to the closed positive orthant), or ``frontier'', $F_n$ of the closed Pareto record-setting (RS) region
\[
\mbox{RS}_n
:= \{0 \leq x \in {\mathbb R}^d: x \not\prec X^{(i)}\mbox{\ for all $1 \leq i \leq n$}\}
\]
at time $n$, where
$0 \leq x$ means that $0 \leq x_j$ for $1 \leq j \leq d$ and
$x \prec y$ means that $x_j < y_j$ for $1 \leq j \leq d$.  With $x_+ := \sum_{j = 1}^d x_j$, let
\[
F_n^- := \min\{x_+: x \in F_n\} \quad \mbox{and} \quad
F_n^+ := \max\{x_+: x \in F_n\},
\]
and define the \emph{width} of $F_n$ as
\[
W_n := F_n^+ - F_n^-.
\]
We describe typical and almost sure behavior of the processes $F^+$, $F^-$, and $W$.  In particular, we show that  $F^+_n \sim \ln n \sim F^-_n$ almost surely and that $W_n / \ln \ln n$ converges in probability to $d - 1$; and for $d \geq 2$ we show that, almost surely, the set of limit points of the sequence $W_n / \ln \ln n$ is the interval $[d - 1, d]$.

We also obtain modifications of our results that are important in connection with efficient simulation of Pareto records.  Let $T_m$ denote the time that the $m$th record is set.  We show
that $F^+_{T_m} \sim (d! m)^{1/d} \sim F^-_{T_m}$ almost surely and that  
$W_{T_m} / \ln m$ converges in probability to $1 - d^{-1}$;
and for $d \geq 2$ we show that, almost surely, 
the sequence $W_{T_m} / \ln m$ has $\liminf$ equal to $1 - d^{-1}$ and $\limsup$ equal to $1$.
\end{abstract}

\section{Introduction, background, and main results}\label{S:intro}

The study of univariate records is very well developed (\cite{Arnold(1998)} being a classical reference), but that of multivariate records less well so, in part because there are many ways one can formulate the latter concept.  See~\cite{Hwang(2010)}, and the references therein, and~\cite[Chap.~8]{Arnold(1998)} for background.

This paper is mainly about the stochastic process $(F_n)$, where $F_n$ is the boundary, or ``frontier'', for \emph{Pareto records} (otherwise known as \emph{nondominated records} or \emph{weak records}; consult
Definitions~\ref{D:record}--\ref{D:RS}) in general dimension~$d$ when the observed sequence of points $X^{(1)}, X^{(2)}, \dots$ are assumed (as they are throughout the paper)  to be \iid\ (independent and identically distributed) copies of a $d$-dimensional random vector~$X$ with independent Exponential$(1)$ coordinates $X_j$.

Theoretical investigation
leading to the results in this paper were spurred by empirical observations whose generation is discussed briefly in \refS{S:time} (see especially
\refF{fig:simulation}) and in detail in~\cite{Fillgenerating(2018)} and began with the simple result of \refT{T:typical}.

{\bf Notation:\ }Throughout this paper we abbreviate the $k$th iterate of natural logarithm $\ln$ by $\L_k$ and
$\L_1$ by $\L$, and we write
$x_+ := \sum_{j = 1}^d x_j$ for the sum of coordinates of the $d$-dimensional vector $x = (x_1, \dots, x_d)$.

Unless otherwise specifically noted, all the results of this paper hold for any dimension $d \geq 1$.

\subsection{Pareto records and the record-setting region}
\label{S:records}
We begin with some definitions.  Write $x \prec y$
(respectively, $x \leq y$)
to mean that $x_j < y_j$
(resp.,\ $x_j \leq y_j$)
for $1 \leq j \leq d$.
(We caution that, with this convention,  $\leq$ is weaker than $\preceq$, the latter meaning ``$\prec$ or $=$''; indeed, $(0, 0) \leq (0, 1)$ but we have neither $(0, 0) \prec (0, 1)$ nor $(0, 0) = (0, 1)$.  This distinction will matter little in this paper, since the probability that any coordinate of an observation is repeated or vanishes is~$0$, but the distinction is important in~\cite{Fillgenerating(2018)}.)  The notation $x \succ y$ means
$y \prec x$, and $x \geq y$ means $y \leq x$.

\begin{definition}
\label{D:record}
(a)~We say that $X^{(k)}$ is a \emph{(Pareto) record} (or that it \emph{sets} a record at time~$k$) if $X^{(k)} \not\prec X^{(i)}$ for all $1 \leq i < k$.

(b)~If $1 \leq k \leq n$,
we say that $X^{(k)}$ is a \emph{current record} (or \emph{remaining record}, or \emph{maximum}) at time~$n$ if $X^{(k)} \not\prec X^{(i)}$ for all $1 \leq i \leq n$.

(c)~If $1 \leq k \leq n$,
we say that $X^{(k)}$ is a \emph{broken record} at time~$n$ if it is a record but not a current record, that is, if $X^{(k)} \not\prec X^{(i)}$ for all $1 \leq i < k$ but $X^{(k)} \prec X^{(\ell)}$ for some $k < \ell \leq n$; in that case, the observation corresponding to the smallest such~$\ell$ is said to \emph{break} or \emph{kill} the
record $X^{(k)}$.
\end{definition}

For $n \geq 1$ (or $n \geq 0$, with the obvious conventions) let $R_n$ denote the number of records $X^{(k)}$ with $1 \leq k \leq n$, let $r_n$ denote the number of remaining records at time~$n$, and let $\beta_n := R_n - r_n$ denote the number of broken records.  Note that $R_n$ and $\beta_n$ are nondecreasing in~$n$, but the same is not true for $r_n$.  For dimension $d \geq 2$, by
standard consideration of concomitants
[that is, by considering the $d$-dimensional sequence $X^{(1)}, \ldots, X^{(n)}$ sorted from largest to smallest value of (say) last coordinate]
we see that $r_n(d)$ (that is, $r_n$ for dimension~$d$, with similar notation used here for $R_n$) has, for each~$n$, the same (univariate) distribution as $R_n(d-1)$; note, however, the same equality in distribution does \emph{not} hold for the stochastic processes $r(d)$ and $R(d - 1)$.

\begin{definition}
\label{D:RS}
(a)~The \emph{record-setting region} at time~$n$ is the (random) closed set of points
\[
\mbox{RS}_n := \{x \in \bbR^d: 0 \leq x \not\prec X^{(i)}\mbox{\ for all $1 \leq i \leq n$}\}.
\]

(b)~We call the (topological) boundary of $\mbox{RS}_n$ (relative to the closed positive orthant determined by the origin) its \emph{frontier} and denote it by $F_n$.
\end{definition}

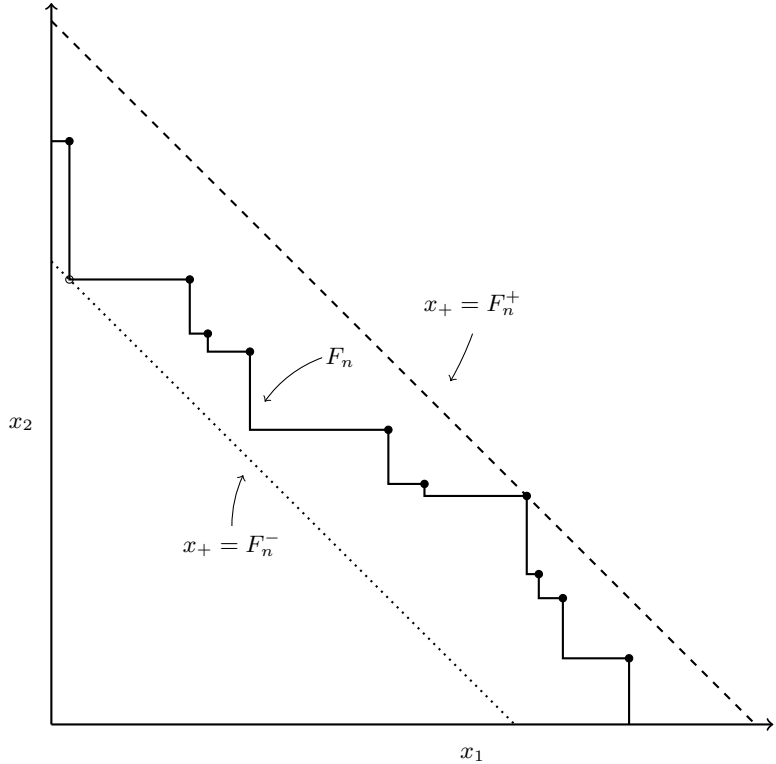
\begin{figure}[htb]
\begin{tikzpicture}[scale=8]
%
%
\draw[->,thick,color=black] (0,0)--(0,1.2);
\draw[->,thick,color=black] (0,0)--(1.2,0);
%
%
\draw[thick,color=black](0,.97)--
(.03,.97)--(.03,.74) --
(.23,.74)--(.23,.65) --
(.26,.65)--(.26,.62) --
(.33,.62)--(.33,.49) --
(.56,.49)--(.56,.40) --
(.62,.40)--(.62,.38) --
(.79,.38)--(.79,.25) --
(.81,.25)--(.81,.21) --
(.85,.21)--(.85,.11) --
(.96,.11)-- (.96,.0)
;
%
%
\filldraw [black]
(.03,.97) circle (.175pt)
(.23,.74) circle (.175pt)
(.26,.65) circle (.175pt)
(.33,.62) circle (.175pt)
(.56,.49) circle (.175pt)
(.62,.40) circle (.175pt)
(.79,.38) circle (.175pt)
(.81,.25) circle (.175pt)
(.85,.21) circle (.175pt)
(.96,.11) circle (.175pt)
;
\draw [black]
(.03,.74) circle (.175pt);
%
%
\draw[dashed,color=black, thick](0,1.17)--(1.17,0);
%
%
\draw[dotted,color=black, thick](0,.77)--(.77,0);
%
%
\draw (.48,.61) node[color=black] {\footnotesize $F_n$};
\draw (.7,.7) node[color=black] {\footnotesize $x_+=F_n^+$};
\draw (.3,.3) node[color=black] {\footnotesize $x_+=F_n^-$};
%
%
\draw [->,color=black](.7,.65) arc (340:330:5mm);
\draw [->,color=black](.3,.33) arc (180:155:2mm);
\draw [->,color=black](.450,.61) arc (110:145:2mm);
%
%
\draw (.7,-.05) node[color=black] {\footnotesize $x_1$};
\draw (-.05,.5) node[color=black] {\footnotesize $x_2$};
\end{tikzpicture}

\caption{Record frontier $F_n$ based on $n$ observations resulting in 10 current records (shown as solid points).
The values $F_n^- = \min\{ x_+ : x \in F_n\}$  and $F_n^+ = \max\{ x_+ : x \in F_n\}$ determine two hyperplanes
$x_+ = F_n^-$ and $x_+ = F_n^+.$  A new observation sets a record if and only if it falls in the region to the upper right of $F_n.$
}
\label{fig:frontierdefs}
\end{figure}

\begin{remark}
\label{R:RS}
The terminology in \refD{D:RS}(a) is natural since the next observation $X^{(n + 1)}$ sets a record if and only if it falls in the record-setting region.  Note that
\begin{align*}
\mbox{RS}_n
&= \{x \in \bbR^d: 0 \leq x \not\prec X^{(i)}\mbox{\ for all $1 \leq i \leq n$} \\
&{} \qquad \qquad \qquad \mbox{such that $X^{(i)}$ is a current record at time~$n$}\},
\end{align*}
and that the current records at time~$n$ all belong to~$\mbox{RS}_n$ but lie on its frontier.
Observe
also that $F_n$ is a closed subset of $\mbox{RS}_n$.  Because this paper makes heavy use of the classical probabilistic notion of boundary-crossing probabilities, to avoid confusion we have chosen to use the term ``frontier'' for $F_n$, rather than ``boundary'', in \refD{D:RS}(b).
\end{remark}

\subsection{The record-setting frontier}
\label{S:boundary}

Our first result shows that deviations of the sum of coordinates for a generic current record at time~$n$ from
$\L n$ are typically of constant order.
Observe that the conditional distribution of $X^{(k)}_+$ given that $X^{(k)}$ is a current record at time~$n$ doesn't depend on $k \in \{1, \dots, n\}$; in particular, it's the conditional distribution of $X^{(n)}_+$ given that $X^{(n)}$ sets a record.  Let $Y_n$ be a random variable with that distribution.
Let~$G$ denote a random variable with the standard Gumbel distribution (\ie, distribution function $x \mapsto e^{- e^{-x}}$, $x \in \bbR$), and write $\Lcto$ for convergence in law (\ie, in distribution)

\begin{theorem}
\label{T:typical}
We have
\[
Y_n - \L n \Lcto G.
\]
\end{theorem}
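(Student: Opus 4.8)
The plan is to obtain the conditional density of $Y_n$ in closed form and then read off the Gumbel limit via the substitution $s = \L n + y$.

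First I would condition on the last observation. Given $X^{(n)} = x$ with $0 \le x$, for each $i < n$ the event $\{X^{(n)}\prec X^{(i)}\}$ has conditional probability $\prod_{j=1}^{d}\P(X^{(i)}_j > x_j) = e^{-x_+}$, and these events are independent over $i\in\{1,\dots,n-1\}$; hence $X^{(n)}$ sets a record with conditional probability $(1-e^{-x_+})^{n-1}$. Since $X^{(n)}_+$ is a sum of $d$ independent Exponential$(1)$ variables, its density is proportional to $s^{d-1}e^{-s}$ on $(0,\infty)$, and so $Y_n$ has density
\[
f_n(s) = \frac{1}{c_n}\,(1-e^{-s})^{n-1}\,s^{d-1}\,e^{-s},\quad s>0,\qquad c_n := \int_0^\infty (1-e^{-s})^{n-1}s^{d-1}e^{-s}\dd s .
\]

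Next I would substitute $s = \L n + y$, using $e^{-s}=e^{-y}/n$ and $s^{d-1}=(\L n)^{d-1}(1+y/\L n)^{d-1}$; the factor $(\L n)^{d-1}/n$ then cancels between numerator and denominator, so that for each fixed $x\in\bbR$
\[
\P\bigpar{Y_n - \L n \le x} = \frac{\displaystyle\int_{-\L n}^{x}\bigpar{1-e^{-y}/n}^{n-1}\bigpar{1+y/\L n}^{d-1}e^{-y}\dd y}{\displaystyle\int_{-\L n}^{\infty}\bigpar{1-e^{-y}/n}^{n-1}\bigpar{1+y/\L n}^{d-1}e^{-y}\dd y}.
\]
Extending each integrand by $0$ for $y\le-\L n$, as \ntoo\ it converges pointwise to the standard Gumbel density $e^{-e^{-y}}e^{-y}$, since $(1-e^{-y}/n)^{n-1}\to e^{-e^{-y}}$ and $(1+y/\L n)^{d-1}\to 1$. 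I would then dominate, uniformly in~$n$: for $y\le 0$ the integrand is at most $e\,e^{-e^{-y}}e^{-y}$ (from $(1-u)^{n-1}\le e^{-(n-1)u}$ with $u=e^{-y}/n\in(0,1)$ when $y>-\L n$, together with $(1+y/\L n)^{d-1}\le 1$), while for $y\ge 0$ and $n\ge 3$ it is at most $(1+y)^{d-1}e^{-y}$ (from $(1-e^{-y}/n)^{n-1}\le 1$ and $\L n\ge 1$); both bounds are integrable over the respective half-lines and free of $n$. Dominated convergence together with the substitution $u=e^{-y}$ then gives numerator $\to\int_{-\infty}^{x}e^{-e^{-y}}e^{-y}\dd y = e^{-e^{-x}}$ and denominator $\to\int_{-\infty}^{\infty}e^{-e^{-y}}e^{-y}\dd y = 1$. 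Hence $\P(Y_n-\L n\le x)\to e^{-e^{-x}}$ for every $x$, which is precisely $Y_n - \L n \Lcto G$.

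I do not anticipate a deep obstacle; the one point requiring care is the interchange of limit and integral, where the integration variable ranges over an interval unbounded at both ends. For $y\to+\infty$ the polynomial factor $(1+y/\L n)^{d-1}$ grows, but is dominated uniformly in~$n$ by $(1+y)^{d-1}e^{-y}\in L^1$; for $y\to-\infty$ the factor $e^{-y}$ blows up, but is tamed uniformly in~$n$ by $(1-e^{-y}/n)^{n-1}\le e\,e^{-e^{-y}}$. Alternatively, the final step can be packaged through Scheff\'e's lemma applied to the densities $f_n(\L n+\cdot\,)$, which upgrades the conclusion to convergence in total variation.
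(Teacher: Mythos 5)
Your argument is correct and essentially the same as the paper's: both derive the conditional density of $Y_n$ proportional to $(1-e^{-s})^{n-1}s^{d-1}e^{-s}$ and then identify the Gumbel limit of the shifted density, with Scheff\'e available to upgrade to total-variation convergence. The one minor difference is that the paper short-cuts by citing the well-known asymptotic $p_n\sim n^{-1}(\L n)^{d-1}/(d-1)!$ for the normalizing constant and then checks pointwise density convergence, whereas you keep the normalization as a ratio of integrals and handle both numerator and denominator by a single dominated convergence argument; this makes your version self-contained (it in effect reproves the $p_n$ asymptotic as a byproduct), and your explicit dominating functions on $(-\L n,0]$ and $[0,\infty)$ are correct.
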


\begin{proof}
This is quite elementary.  Let $p_n$ denote the probability that $X^{(n)}$ sets a record.  Fix $n \geq 2$ for the moment.  For $x \succ 0$ we have
\begin{align*}
\lefteqn{\P(X^{(n)} \in dx\,|\,X^{(n)} \not\prec X^{(i)}\mbox{\ for all $1 \leq i \leq n$})} \\
&= p_n^{-1} \P(X^{(n)} \in dx,\,X^{(n)} \not\prec X^{(i)}\mbox{\ for all $1 \leq i \leq n$}) \\
&= p_n^{-1} \P(X^{(n)} \in dx,\,x \not\prec X^{(i)}\mbox{\ for all $1 \leq i \leq n - 1$}) \\
&= p_n^{-1} \P(X^{(n)} \in dx) \P(x \not\prec X^{(i)}\mbox{\ for all $1 \leq i \leq n - 1$}) \\
&= p_n^{-1} e^{-x_+} [1 - \P(x \prec X^{(1)})]^{n - 1}\,dx
= p_n^{-1} e^{-x_+} (1 - e^{-x_+})^{n - 1}\,dx,
\end{align*}
and so the conditional density depends on~$x$ only through~$x_+$.  It follows that the density $f_n(y)$ of $Y_n$ satisfies
\[
f_n(y) = p_n^{-1} \frac{y^{d - 1}}{(d - 1)!} e^{-y} (1 - e^{-y})^{n - 1}, \quad y > 0.
\]
Using the well-known asymptotic equivalence $p_n \sim n^{-1} (\L n)^{d - 1} / (d - 1)!$ as $n \to \infty$ [see~\eqref{sets} below], it is easy to check that, for each fixed $z \in \bbR$, the density of $Y_n - \L n$ at~$z$ converges to the standard Gumbel density $e^{- z} e^{- e^{- z}}$ as $n \to \infty$.  The claimed result thus follows from Scheff\'{e}'s theorem (\eg,\ \cite[Thm.~16.12]{Billingsley(2012)}), which shows that there is in fact convergence in total variation.
\end{proof}

This paper primarily concerns the stochastic process $(F_n)$, and specifically its ``width'' as defined next
(see \refF{fig:frontierdefs}).

\begin{definition}
\label{D:W}
Recall that $F_n$ denotes the frontier of $\mbox{RS}_n$, and let
\begin{equation}
\label{-+}
F_n^- := \min\{x_+: x \in F_n\} \quad \mbox{and} \quad F_n^+ := \max\{x_+: x \in F_n\}.
\end{equation}
We define the \emph{width} of $F_n$ as
\begin{equation}
\label{W}
W_n := F_n^+ - F_n^-.
\end{equation}
\end{definition}

Very roughly put, what we will see in this paper is that, unlike $Y_n$ of \refT{T:typical}, deviations of $F^+_n$ from $\L n$ are exactly of order $\L_2 n$; on the other hand, we will see that deviations
of $F^-_n$ from $\L n$ are of smaller order than $\L_2 n$.  It will follow that the width of the frontier is exactly of order $\L_2 n$.

We next make some simple observations about the quantities appearing in \refD{D:W} that will prove fundamentally useful to our development.

\begin{lemma}[\underline{characterization of $F_n^+$}]
\label{L:+}
We have
\[
F_n^+ = \max\{X^{(k)}_+: 1 \leq k \leq n\},
\]
which is nondecreasing in~$n$.
\end{lemma}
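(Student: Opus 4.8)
The plan is to establish the identity as two inequalities and then read off monotonicity. Throughout write $M_n := \max\{X^{(k)}_+ : 1 \le k \le n\}$.

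First I would observe that $M_n$ is attained at an index $k^\ast$ for which $X^{(k^\ast)}$ is necessarily a \emph{current} record at time~$n$: indeed, if $X^{(k^\ast)} \prec X^{(i)}$ for some $i \le n$, then $X^{(k^\ast)}_+ < X^{(i)}_+ \le M_n$, contradicting the choice of $k^\ast$. By \refR{R:RS}, every current record at time~$n$ lies on the frontier $F_n$; hence $X^{(k^\ast)} \in F_n$ and therefore $F_n^+ \ge X^{(k^\ast)}_+ = M_n$.

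For the reverse inequality, fix $x \in F_n$. Since $\mbox{RS}_n$ is closed we have $x \in \mbox{RS}_n$, while the definition of $F_n$ as the boundary of $\mbox{RS}_n$ relative to the closed positive orthant provides a sequence $y^{(m)} \to x$ with $y^{(m)} \ge 0$ and $y^{(m)} \notin \mbox{RS}_n$ for every~$m$; that is, for each~$m$ there is an index $i_m \in \{1, \dots, n\}$ with $y^{(m)} \prec X^{(i_m)}$. Since there are only finitely many indices, by the pigeonhole principle some value $i$ occurs infinitely often among the $i_m$; passing to that subsequence we have $y^{(m)}_j < X^{(i)}_j$ for all $j$ and all $m$ in the subsequence, and letting $m \to \infty$ yields $x_j \le X^{(i)}_j$ for all~$j$, i.e.\ $x \le X^{(i)}$. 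Consequently $x_+ \le X^{(i)}_+ \le M_n$. Taking the maximum over $x \in F_n$ gives $F_n^+ \le M_n$, and combining with the previous paragraph, $F_n^+ = M_n$. Finally, $M_n$ is plainly nondecreasing in~$n$, since enlarging~$n$ only enlarges the set over which the maximum is taken, which gives the last assertion.

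The argument is essentially elementary; the only point requiring a little care is the topological step, namely correctly unwinding the definition of $F_n$ as a \emph{relative} boundary and extracting from it the approximating sequence of dominated points, after which the pigeonhole-and-limit passage is routine. One could also note in passing that the same limit argument shows $F_n$ is bounded --- every $x \in F_n$ satisfies $x \le X^{(i)}$ for some $i \le n$ --- hence compact, so that $F_n^+$ is indeed attained; but this is not needed beyond what \refD{D:W} already presumes.
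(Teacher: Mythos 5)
Your proof is correct and follows the same two-inequality strategy as the paper: the maximizing observation is necessarily a current record and hence lies on $F_n$, giving $F_n^+ \ge M_n$; and every $x \in F_n$ is dominated coordinatewise by some observation, giving $F_n^+ \le M_n$. The paper's version of the second step is terser (it just asserts $x \preceq X^{(i)}$ for some $i$), and in fact you have the relation slightly more accurately: the correct conclusion is $x \le X^{(i)}$ in the paper's notation, not $x \preceq X^{(i)}$, since boundary points such as $X^{(i_j)} e_j$ from Lemma~\ref{L:-}(a) satisfy the former but not the latter; your topological limit argument is exactly the justification the paper leaves implicit.
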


\begin{proof}
The current records at time~$n$ all belong to $F_n$, and broken records and non-records all have coordinate-sums (strictly) smaller than some current record.  Thus
$F_n^+ \geq \max\{X^{(k)}_+: 1 \leq k \leq n\}$.  Conversely, if $x \in F_n$, then $x \preceq X^{(i)}$ for some~$i$; it follows that $F_n^+ \leq \max\{X^{(k)}_+: 1 \leq k \leq n\}$.
\end{proof}

\begin{lemma}[\underline{two upper bounds on $F_n^-$}]
\label{L:-}
\

{\rm (a)}~Define
\[
B_n^+(j) := \max\{X_j^{(i)}: 1 \leq i \leq n\}.
\]
Then
\[
F_n^- \leq \min_{1 \leq j \leq d} B_n^+(j).
\]

{\rm (b)}~Let $1 \leq m \leq n$.  Define
\[
\Bh_{m, n} := \mbox{\rm $m^{\rm th}$-largest value among $X^{(k)}_+$ with $1 \leq k \leq n$}.
\]
Then, over the event $\{r_n \geq m\}$ that there are at least~$m$ remaining records at time~$n$, we have
\[
F_n^- \leq \Bh_{m, n}.
\]

{\rm (c)}~The
processes $F^-$, $\min_{1 \leq j \leq d} B^+(j)$, and $\Bh_{m, \cdot}$ (for any~$m$) all have nondecreasing sample paths.
\end{lemma}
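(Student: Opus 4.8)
The plan is to reduce all three parts to the single identity
\[
F_n^- = \min\{x_+ : x \in \mbox{RS}_n\} \qquad \text{a.s.},
\]
that is, the frontier point of least coordinate-sum is also the $\mbox{RS}_n$-point of least coordinate-sum. To prove this, note first that $h(x) := x_+$ is continuous and has compact sublevel sets on the closed positive orthant, so it attains its minimum over the (nonempty, closed) set $\mbox{RS}_n$ at some $x^*$; since $F_n \subseteq \mbox{RS}_n$, it suffices to show $x^* \in F_n$. If instead $x^*$ lay in the interior of $\mbox{RS}_n$ relative to the orthant, then a small relative neighborhood of $x^*$ would lie in $\mbox{RS}_n$; but almost surely $0 \notin \mbox{RS}_n$ (every coordinate of every $X^{(i)}$ is a.s.\ positive, so $0 \prec X^{(1)}$), whence $x^*_j > 0$ for some $j$, and decreasing that coordinate slightly keeps us inside the orthant and inside $\mbox{RS}_n$ while strictly lowering $h$ --- contradicting the minimality of $x^*$. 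Hence $x^* \in F_n$, and the identity follows.

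Given the identity, parts~(a) and~(b) reduce to exhibiting suitable members of $\mbox{RS}_n$. For~(a), the point with $j$th coordinate equal to $B_n^+(j)$ and all other coordinates $0$ lies in $\mbox{RS}_n$, because for each $i \le n$ the relation ``$\prec X^{(i)}$'' already fails in coordinate~$j$ (as $B_n^+(j) \ge X^{(i)}_j$); its coordinate-sum is $B_n^+(j)$, so $F_n^- \le B_n^+(j)$, and minimizing over~$j$ gives the bound. For~(b), work on $\{r_n \ge m\}$ and let $s$ be the least coordinate-sum among the $r_n \ge m$ current records at time~$n$; each such record satisfies $X^{(k)} \not\prec X^{(i)}$ for all $i \le n$ and $X^{(k)} \ge 0$, hence lies in $\mbox{RS}_n$, so $F_n^- \le s$. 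Moreover at least $m$ of the values $X^{(k)}_+$ ($1 \le k \le n$) --- namely those of the current records --- are $\ge s$, so the $m$th largest of them satisfies $\Bh_{m,n} \ge s \ge F_n^-$.

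For~(c): each $B_n^+(j) = \max_{i \le n} X^{(i)}_j$ is nondecreasing in~$n$, and a pointwise minimum of nondecreasing functions is nondecreasing, so $\min_{1 \le j \le d} B^+(j)$ has nondecreasing sample paths; passing from~$n$ to~$n + 1$ only inserts one further value into the multiset $\{X^{(k)}_+ : k \le n\}$, which cannot lower its $m$th-largest element, so $\Bh_{m, \cdot}$ is nondecreasing; and $(\mbox{RS}_n)_{n \ge 1}$ is nonincreasing (each new observation only tightens the defining constraints), so by the identity $F_{n+1}^- = \min_{\mbox{RS}_{n+1}} h \ge \min_{\mbox{RS}_n} h = F_n^-$.

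The one place that calls for care --- and hence the main, if modest, obstacle --- is the identity itself: one must confirm that the coordinate-sum minimizer over $\mbox{RS}_n$ lands on the frontier rather than in the relative interior, which requires attention to the faces of the orthant and uses the almost-sure hypothesis (to rule out $x^* = 0$). Everything after that is bookkeeping. (One could instead prove~(a) and~(b) without the identity, by checking directly that the exhibited points lie on $F_n$ --- points just below $B_n^+(j)\,e_j$ along the $j$th axis, respectively points $X^{(k)} - \epsilon\,\etta$, fail to lie in $\mbox{RS}_n$ --- but the identity is in any case needed for the monotonicity of $F^-$ asserted in~(c).)
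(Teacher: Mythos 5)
Your proof is correct, and the underlying argument is the same as the paper's; what differs is the organization.  You factor out the a.s.\ identity $F_n^- = \min\{x_+ : x \in \mbox{RS}_n\}$ as a single upfront lemma (proved by a short topological argument: the minimizer over the closed set $\mbox{RS}_n$ cannot lie in its relative interior) and then derive all three parts from it, needing only to exhibit points of $\mbox{RS}_n$ for (a)--(b) and to invoke nesting of the sets $\mbox{RS}_n$ for (c).  The paper instead proves each part ad hoc: for (a) it verifies directly that the witness points $B_n^+(j)\,e_j$ lie on the frontier $F_n$ itself (a slightly stronger check than membership in $\mbox{RS}_n$); for (b) it appeals to the fact that current records lie on $F_n$; and for (c) it says the monotonicity of $F^-$ ``follows easily'' from $F_{n+1} \subseteq \mbox{RS}_{n+1} \subseteq \mbox{RS}_n$ --- an inference that in fact relies on the very identity you isolate, but leaves it implicit.  (The paper does later use an equivalent form of the identity, $\{F_n^- \le b\} = \{\exists\, x \in \mbox{RS}_n : x_+ \le b\}$, at the start of Section~3.)  The witness points themselves are identical in both proofs.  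Your reorganization is a modest but genuine improvement: it makes the topological step behind (c) explicit and lets (a)--(b) get by with the weaker membership check.
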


\begin{proof}
(a)~For $j = 1, \dots, d$, let $i_j \in \{1, \dots, n\}$ denote the almost surely unique index such that
\[
X^{(i_j)}_j = \max\{X_j^{(i)}: 1 \leq i \leq n\}.
\]
Let $e_j = (0, \dots, 0, 1, 0, \dots, 0)$ denote the $j$th coordinate vector.
We claim that the points $Y^{(j)} := X^{(i_j)} e_j$ with $j = 1, \dots, d$ all belong to $F_n$ (in fact, to
$\mbox{$F_n \cap \mbox{RS}_n$}$), and then the inequality is immediate.  To prove the claim, note that all of
the points $Y^{(j)}$ belong to $\mbox{RS}_n$ [because $Y^{(j)}_j = X^{(i_j)}_j$ and hence $Y^{(j)} \not\prec X^{(i_j)}$] but also to $F_n$ [because $Y^{(j)} \leq X^{(i_j)}$].

(b)~Over the event $\{r_n \geq m\}$, $F_n^-$ is certainly at most the $m$th-largest sum of coordinates of remaining records, which is in turn at most $\Bh_{m, n}$.

(c)~The asserted monotonicity is clear for the bounding processes.  The asserted monotonicity
of $F^-$ follows easily from the observation that
$F_{n + 1} \subseteq \mbox{RS}_{n + 1} \subseteq \mbox{RS}_n$.
\end{proof}

It seems difficult to study the processes
$F^+$ and $F^-$ bivariately, so we draw all our conclusions about the width process~$W$ by studying
$F^+$ and $F^-$ univariately (that is, separately) and
using $W = F^+ - F^-$.
The behavior of
$F^+$ is well known from classical extreme value theory and is reviewed in \refS{S:+}.  Conclusions about
$F^-$ will be drawn from (i)~the upper-bounding processes in~\refL{L:-}(a)--(b) together with
classical extreme value theory for those bounding processes and (ii)~a rather nontrivial lower bound developed in \refS{S:-}.

\subsection{Main results}
\label{S:main}

We next present the main results of our paper.  What the results show, in various precise senses, is that $F_n^+$ and $F_n^-$ both concentrate near $\L n$, with deviations  that are $O(\L_2 n)$, from which it follows of course that $W_n = O(\L_2 n)$.  But for $d \geq 2$ we show more, namely, that $\L_2 n$ is the exact scale for $W_n$, that is, that $W_n = \Theta(\L_2 n)$.
We can even narrow things down further:\
$W_n / \L_2 n \to d - 1$ in probability for each $d \geq 1$, with an almost sure $\liminf$ equal to $d - 1$ and an almost sure $\limsup$ equal to~$d$.

Here are our main results for arbitrary but fixed dimension $d \geq 1$.  We consider both convergence in probability (typical behavior) and almost sure largest and smallest deviations from $\L n$ (top and bottom boundary-behavior, respectively) for large~$n$.

\begin{theorem}[Kiefer~\cite{Kiefer(1972)}]
\label{T:+}
Consider the process $F^+$ defined at~\eqref{-+}.
\medskip

{\rm (a)~\underline{Typical behavior of $F^+$}:}
\[
F_n^+ - [\L n + (d - 1) \L_2 n - \L((d - 1)!)] \Lcto G.
\]
\smallskip

{\rm (b)~\underline{Top boundaries for $F^+$}:}
\[
\P(F_n^+ \geq \L n + c \L_2 n\mbox{\rm \ \io}) =
\begin{cases}
1 & \mbox{{\rm if} $c \leq d$;} \\
0 & \mbox{{\rm if} $c > d$}.
\end{cases}
\]
\smallskip

{\rm (c)~\underline{Bottom boundaries for $F^+$}:}
\[
\P(F_n^+ \leq \L n + (d - 1) \L_2 n - \L_3 n - \L((d-1)!) + c\mbox{\rm \ \io}) =
\begin{cases}
1 & \mbox{{\rm if} $c \geq 0$;} \\
0 & \mbox{{\rm if} $c < 0$}.
\end{cases}
\]
\end{theorem}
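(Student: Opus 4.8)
My plan is to reduce everything to classical extreme‑value theory via \refL{L:+}. By that lemma, $F_n^+ = M_n := \max\{\,X^{(k)}_+ : 1 \le k \le n\,\}$ is the running maximum of the i.i.d.\ sequence of coordinate‑sums, each term distributed as the sum $S$ of $d$ independent $\Exp(1)$'s, i.e.\ as a Gamma$(d,1)$ variable with
\[
q(x) := \P(S > x) = e^{-x}\sum_{j=0}^{d-1}\frac{x^j}{j!} \sim \frac{x^{d-1}e^{-x}}{(d-1)!}\qquad(x \to \infty),
\]
and with $\P(M_n \le t) = [1 - q(t)]^n$ for every $t$. Since $S$ lies in the Gumbel domain of attraction, parts~(a)--(c) are instances of the general theory of the typical, upper‑class, and lower‑class behavior of maxima of i.i.d.\ sequences (hence the attribution to Kiefer), so the only work is to read off the normalizing sequences from this explicit $q$ and to carry out the Borel--Cantelli bookkeeping, repeatedly using the monotonicity of $M_n$ in $n$ (noted in \refL{L:+}) to pass between statements indexed by $n$ and statements indexed by a subsequence.

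For~(a), set $\ell_n := \L n + (d-1)\L_2 n - \L((d-1)!)$. From $e^{-\ell_n} = (d-1)!\,/\,[n(\L n)^{d-1}]$ and $(\ell_n+z)^{d-1} \sim (\L n)^{d-1}$ one checks $n\,q(\ell_n+z) \to e^{-z}$ for each fixed $z \in \bbR$, hence $\P(M_n - \ell_n \le z) = [1-q(\ell_n+z)]^n \to \exp(-e^{-z})$, which is the asserted convergence in law to the standard Gumbel $G$.

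The three ``convergence‑to‑$0$/$1$'' halves of~(b) and~(c) that behave routinely all go by Borel--Cantelli along a geometric subsequence $n_k := \ceil{\rho^k}$, monotonicity being used to control every $n$ in a block by the block endpoints. \emph{(b), $c > d$:} with $\phi(n) := \L n + c\L_2 n$, monotonicity gives $\{\,M_n \ge \phi(n) \text{ for some } n \in [n_{k-1},n_k)\,\} \subseteq \{M_{n_k} \ge \phi(n_{k-1})\}$, whose probability is at most $n_k\,q(\phi(n_{k-1})) \asymp (n_k/n_{k-1})(\L n_{k-1})^{d-1-c} \asymp k^{d-1-c}$, summable exactly when $c > d$; the first Borel--Cantelli lemma gives the ``$0$'' conclusion. \emph{(c), $c < 0$:} writing $t_n$ for the curve in~(c), choose the ratio $\rho \in (1, e^{-c})$ --- possible precisely because $e^{-c} > 1$; then $\{\,M_n \le t_n \text{ for some } n \in [n_k,n_{k+1})\,\} \subseteq \{M_{n_k} \le t_{n_{k+1}}\}$, whose probability is at most $\exp(-n_k\,q(t_{n_{k+1}})) \asymp k^{-e^{-c}/\rho}$, summable since $e^{-c}/\rho > 1$. \emph{(b), $c \le d$:} split the observations into the consecutive blocks $(n_{k-1}, n_k]$; the events $A_k := \{\,\max_{n_{k-1} < i \le n_k} X^{(i)}_+ \ge \phi(n_k)\,\}$ depend on disjoint blocks of observations, hence are independent, have $\P(A_k) \asymp k^{d-1-c}$ (checking the cases $c \le d-1$ and $d-1 < c \le d$ separately), and $\sum_k \P(A_k) = \infty$ exactly when $c \le d$, so the second Borel--Cantelli lemma makes infinitely many $A_k$ occur, each forcing $M_{n_k} \ge \phi(n_k)$, i.e.\ $F_n^+ \ge \L n + c\L_2 n$ at $n = n_k$.

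The remaining half of~(c) --- that $F_n^+ \le t_n$ infinitely often, almost surely, when $c \ge 0$ --- is the main obstacle, and a naive block/independence argument does not suffice: forcing one fresh block to dominate $M_{n_k}$ makes the subsequence grow so fast that $\sum_k \P(M_{n_k} \le t_{n_k})$ converges. The plan is a second‑moment (Kochen--Stone) argument along a moderately fast subsequence, e.g.\ $n_k := k^k$. With $E_k := \{M_{n_k} \le t_{n_k}\}$, the decomposition $M_{n_k} = \max(M_{n_j}, \max_{n_j < i \le n_k} X^{(i)}_+)$ together with monotonicity of the curve yields, for $j < k$, the exact identity $\P(E_j \cap E_k) = \P(E_j)\,[1 - q(t_{n_k})]^{n_k - n_j}$, whence $\P(E_j \cap E_k)/[\P(E_j)\P(E_k)] = [1 - q(t_{n_k})]^{-n_j} \le \exp(2 n_j\,q(t_{n_k}))$; since $n_{k-1}/n_k \sim e^{-1}/k$ for $n_k = k^k$, this ratio is $1 + o(1)$ uniformly over $j < k$, so the pairwise correlations are asymptotically trivial. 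One also checks that $\P(E_k) = [1 - q(t_{n_k})]^{n_k}$ behaves like $(\L n_k)^{-e^{-c}}$ and hence $\sum_k \P(E_k) = \infty$ for every $c \ge 0$; this last point is genuinely delicate at $c = 0$, where $\P(E_k) \sim 1/(k \ln k)$ and the $o(1)$ errors in the tail estimate must be tracked carefully to be sure the barely‑divergent harmonic‑type series survives. Given these two facts, the Kochen--Stone lemma gives that $E_k$ occurs for infinitely many $k$ almost surely, i.e.\ $F_n^+ \le t_n$ infinitely often. (An alternative to the Kochen--Stone step is to use the record‑value/record‑time representation of the maximum process --- the record values of an i.i.d.\ sequence form a unit‑rate renewal process on the $-\L q$ scale with conditionally geometric inter‑record gaps --- and reduce~(c) to a Borel--Cantelli analysis of the geometric overshoots, but this carries essentially the same bookkeeping.)
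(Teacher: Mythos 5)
Your treatment of part~(a), of part~(b), and of the outer case $c<0$ in part~(c) essentially coincides with the paper's (which distills Kiefer's): a direct computation for~(a), and Borel--Cantelli along geometric subsequences together with the monotonicity of $F^+$ for the rest. (For~(b) the paper also records Kiefer's simpler first proof, which notes that for ultimately nondecreasing $b_n\to\infty$ one has $\{F^+_n>b_n\mbox{ i.o.}\}=\{X^{(n)}_+>b_n\mbox{ i.o.}\}$ and applies Borel--Cantelli directly to the independent events $\{X^{(n)}_+>b_n\}$; your block argument corresponds to Kiefer's second proof.)

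For the inner case $c\ge0$ of~(c) you depart and propose a Kochen--Stone second-moment argument. That route can be made to work, but your stated reason for abandoning a block argument --- that forcing a fresh block to dominate $M_{n_k}$ requires a subsequence growing so fast that $\sum_k\P(M_{n_k}\le t_{n_k})$ converges --- misdescribes what the block argument actually does, and Kiefer's argument (which the paper follows) does go through. One never asks a block to dominate. Take $n_j=\lfloor e^{\alpha j\L j}\rfloor$ with $\alpha>1$ and the independent block maxima $F^+_{n_j,n_{j+1}}:=\max\{X^{(i)}_+:n_j<i\le n_{j+1}\}$. Then one combines (i) a \emph{first} Borel--Cantelli argument showing $F^+_{n_j}\le b_{n_{j+1}}$ for all large~$j$ a.s.\ [the complementary probabilities are $O(j^{-\alpha}\L j)$, summable since $\alpha>1$] with (ii) a \emph{second} Borel--Cantelli argument showing that the independent events $\{F^+_{n_j,n_{j+1}}\le b_{n_{j+1}}\}$ occur i.o.\ a.s.\ [their probabilities are $\sim[\alpha(j+1)\L(j+1)]^{-1}$, which is non-summable]. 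On the intersection, $F^+_{n_{j+1}}=\max\bigl(F^+_{n_j},F^+_{n_j,n_{j+1}}\bigr)\le b_{n_{j+1}}$ i.o., and monotonicity then yields $F^+_n\le b_n$ i.o.\ as required. The subsequence growth rate is calibrated so that (i) is summable while (ii) is barely divergent simultaneously, not so that the block dominates. Finally, if you do pursue Kochen--Stone, be aware that it delivers only $\P(\limsup_k E_k)>0$; to reach probability one you should either invoke the Kolmogorov $0$--$1$ law (applicable here, since $t_{n_k}\to\infty$ renders any fixed finite prefix of observations eventually irrelevant to the events $E_k$), or observe that your uniform $1+o(1)$ correlation estimate yields the lower bound $1/(1+\epsilon)$ for every $\epsilon>0$ and hence the bound~$1$.
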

\medskip

\refT{T:+} gives rise immediately to the following succinct corollary.
\medskip

\begin{corollary}[Kiefer~\cite{Kiefer(1972)}]
\label{C:+}
Consider the process $F^+$ defined at~\eqref{-+}.
\medskip

{\rm (a)~\underline{Typical behavior of $F^+$}:}
\[
\frac{F_n^+ - \L n}{\L_2 n} \Pto d - 1.
\]

{\rm (b)~\underline{Almost sure behavior for $F^+$}:}
\[
\liminf \frac{F_n^+ - \L n}{\L_2 n} = d - 1 < d =
\limsup \frac{F_n^+ - \L n}{\L_2 n} \mbox{\rm \ \as}
\]
\end{corollary}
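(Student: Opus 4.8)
The plan is to read the corollary off directly from \refT{T:+}; each of the three parts of that theorem becomes, after dividing by $\L_2 n$ and letting \ntoo, the corresponding assertion about $(F_n^+ - \L n)/\L_2 n$. For part~(a): \refT{T:+}(a) exhibits $F_n^+ - \L n - (d - 1)\L_2 n$ (up to the additive constant $\L((d-1)!)$) as a sequence converging in law, hence bounded in probability; since $\L_2 n \to \infty$, dividing by $\L_2 n$ gives $(F_n^+ - \L n)/\L_2 n - (d-1) \pto 0$, which is part~(a). This half needs no real work.

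For the $\limsup$ half of part~(b) I would use the dichotomy in \refT{T:+}(b). For a fixed rational $c < d$, the probability-one ``infinitely often'' statement there gives $\limsup_n (F_n^+ - \L n)/\L_2 n \ge c$ almost surely; intersecting these events over the countably many rationals $c < d$ yields $\limsup_n (F_n^+ - \L n)/\L_2 n \ge d$ a.s. Symmetrically, for a fixed rational $c > d$ the probability-zero statement gives that almost surely $F_n^+ < \L n + c\L_2 n$ for all large~$n$, hence $\limsup_n (F_n^+ - \L n)/\L_2 n \le c$ a.s.; intersecting over rationals $c > d$ gives $\limsup_n (F_n^+ - \L n)/\L_2 n \le d$ a.s. Thus the $\limsup$ equals~$d$ almost surely.

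For the $\liminf$ half I would argue the same way from \refT{T:+}(c), now carrying along the lower-order correction terms. Taking $c = 0$ there shows that almost surely $F_n^+ \le \L n + (d-1)\L_2 n - \L_3 n - \L((d-1)!)$ infinitely often; dividing by $\L_2 n$ and using $\L_3 n/\L_2 n \to 0$ and $\L((d-1)!)/\L_2 n \to 0$ gives $\liminf_n (F_n^+ - \L n)/\L_2 n \le d - 1$ a.s. Taking a fixed negative $c$ (say $c = -1$) shows that almost surely the reverse strict inequality holds for all large~$n$, and dividing by $\L_2 n$ gives $\liminf_n (F_n^+ - \L n)/\L_2 n \ge d - 1$ a.s. Since $d - 1 < d$, this completes part~(b).

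There is essentially no hard step here; the only points requiring attention are (i) the passage from the ``infinitely often'' events of \refT{T:+}(b),(c) to $\limsup$/$\liminf$ inequalities, which is why I replace the sharp threshold by a countable dense family of thresholds and intersect the resulting almost-sure events, and (ii) verifying that the correction terms $\L_3 n/\L_2 n$, $\L((d-1)!)/\L_2 n$, and (in the $c=-1$ case) $1/\L_2 n$ all tend to $0$, so that the rescaled bottom-boundary bounds from \refT{T:+}(c) genuinely converge to $d-1$.
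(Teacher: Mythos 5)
Your deduction is correct and is exactly the one the paper intends; the paper simply declares \refC{C:+} an immediate consequence of \refT{T:+} without writing out the details, and you have supplied those details faithfully. One very minor remark: for the lower bound on the $\limsup$ you need no countable family of rationals, since \refT{T:+}(b) already holds at $c = d$ itself (the ``$\io$'' event has probability one there), giving $\limsup (F_n^+ - \L n)/\L_2 n \ge d$ a.s.\ in one step; the countable-intersection device is genuinely needed only for the ``probability zero'' sides, where the sharp threshold $c$ is excluded.
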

\medskip

\begin{remark}
\label{R:interval+}
In fact, one can show rather simply from \refC{C:+}(b) and the fact that
$F^+$ has nondecreasing sample paths that the set (call it $\Lambda$) of limit points of the
sequence $(F_n^+ - \L n) / \L_2 n$ is almost surely the closed interval
$[d - 1, d]$.
Here
is a sketch of the proof.  The set~$\Lambda$ is closed, so we need only show that~$\Lambda$ is dense in
$[d - 1, d]$, which clearly follows if we can show that
\begin{equation}
\label{one step}
\limsup_{n \to \infty}
\left[ \frac{F^+_n - \L n}{\L_2 n} - \frac{F^+_{n + 1} - \L (n + 1)}{\L_2 (n + 1)} \right] \leq 0\mbox{\ \as},
\end{equation}
the roughly stated idea being that then (\as) the
sequence $(F_n^+ - \L n) / \L_2 n$ ``can't leap downward over any interval \io" in its infinitely many downward moves from its $\limsup$ to its $\liminf$.  To
prove~\eqref{one step}, we first
bound $F^+_{n + 1}$ from below by $F^+_n$, then express the resulting difference with a common denominator, and finally use the consequence $F^+_n \sim \L n\mbox{\ \as}$ of \refC{C:+}(b) to find
\begin{align*}
\lefteqn{\hspace{-0.1in}\frac{F^+_n - \L n}{\L_2 n} - \frac{F^+_{n + 1} - \L (n + 1)}{\L_2 (n + 1)}} \\
&\leq \frac{(1 + o(1)) (n \L n)^{-1} F^+_n + (1 + o(1)) n^{-1} \L_2 n}{(1 + o(1)) (\L_2 n)^2}
\sim n^{-1} (\L_2 n)^{-1} = o(1) \mbox{\ \as}
\end{align*}
as $n \to \infty$.
\end{remark}
\medskip

\begin{remark}
\label{R:Bai2005}
Our \refT{T:+} formalizes and improves
upon related computations in Bai et al.~\cite[Secs.~1 and 3.2]{Bai(2005)} who, for the limited purpose of proving a central limit theorem reviewed in \refT{T:typical counts}(a) below, ``observe that nearly all maxima occur in a thin strip sandwiched between [the] two parallel hyper-planes''
\[
x_+ = \L n - \L_3 n - \L[4 (d - 1)] \quad \mbox{and} \quad x_+ = \L n + 4 (d - 1) \L_2 n.
\]
\end{remark}
\medskip

Our results
for $F^-$ show that the deviations of $F^-_n$ from $\L n$ are almost surely negligible on a scale of $\L_2 n$.
\medskip

\begin{theorem}
\label{T:-}
Consider the process $F^-$ defined at~\eqref{-+}.
\medskip

{\rm (a)~\underline{Typical behavior of $F^-$}:}
\[
\P(F_n^- \leq \L n - 3 \L_3 n) \to 0
\]
and
\[
\P(F_n^- \geq \L n + c_n) \to 0\mbox{\rm \ if $c_n \to \infty$}.
\]
\medskip

{\rm (b)~\underline{Top outer boundaries for $F^-$}:}  If $d \geq 2$, then
\[
\P(F_n^- \geq \L n + c \L_2 n\mbox{\rm \ \io}) = 0 \mbox{\rm \ if $c > 0$}.
\]
\medskip

{\rm (c1)~\underline{A bottom outer boundary for $F^-$ on the scale of $\L_3 n$}:}
\[
\P(F_n^- \leq \L n - 3 \L_3 n\mbox{\rm \ \io}) = 0.
\]
\medskip

{\rm (c2)~\underline{A bottom inner boundary for $F^-$ on the scale of $\L_3 n$}:}
\[
\P(F_n^- \leq \L n - \L_3 n\mbox{\rm \ \io}) = 1.
\]
\end{theorem}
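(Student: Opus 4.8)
The plan is to treat the statements bounding $F_n^-$ from above and from below separately. The two upper-bound statements --- the second display of~(a) and part~(b) --- will come from the inequalities $F_n^-\le\min_{1\le j\le d}B_n^+(j)$ and, on $\{r_n\ge m\}$, $F_n^-\le\Bh_{m,n}$ of \refL{L:-}, together with classical extreme-value asymptotics for maxima of $\mathrm{Exp}(1)$ variables. The three lower-bound statements --- the first display of~(a), part~(c1), and the $\liminf$-type part~(c2) --- require more. For the first two I would use that, almost surely, $0\notin\mathrm{RS}_n$ while $\mathrm{RS}_n$ is an up-set, so the coordinate-sum over $\mathrm{RS}_n$ is minimized at a point of $F_n$, whence
\[
\{F_n^-\le\ell\}=\bigl\{\{x\ge0:x_+\le\ell\}\not\subseteq\textstyle\bigcup_{i=1}^n[0,X_1^{(i)})\times\cdots\times[0,X_d^{(i)})\bigr\},
\]
and the elementary identity $\P(x\not\prec X^{(i)}\text{ for all }i\le n)=(1-e^{-x_+})^n$ for each fixed $x\ge0$.

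For the second display of~(a): $F_n^-\le\min_jB_n^+(j)$ and $\P(\min_jB_n^+(j)\ge\L n+c_n)=\bigl[1-(1-e^{-c_n}/n)^n\bigr]^d\to0$ whenever $c_n\to\infty$. For part~(b) with $c>1/d$: the process $\min_jB_\cdot^+(j)$ has nondecreasing paths (\refL{L:-}(c)) and along $n_k=2^k$ satisfies $\P(\min_jB_{2^{k+1}}^+(j)\ge\L 2^k+c\L_2 2^k)\le\bigl(2(k\ln2)^{-c}\bigr)^d$, summable precisely because $cd>1$; Borel--Cantelli combined with monotonicity (to interpolate over $n\in[2^k,2^{k+1})$, using that $\L n+c\L_2 n$ is eventually increasing) gives~(b) in this range. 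For $0<c\le1/d$, which is non-empty only when $d\ge2$, I would instead fix $\epsilon\in(0,c)$, set $m_n:=\lceil(\L n)^{d-1-\epsilon}\rceil$, and use
\[
\{F_n^-\ge\L n+c\L_2 n\}\subseteq\{r_n<m_n\}\cup\bigl\{\#\{k\le n:X_+^{(k)}>\L n+c\L_2 n\}\ge m_n\bigr\}.
\]
The count in the second event has mean $\sim(\L n)^{d-1-c}/(d-1)!\ll m_n$, so a Chernoff bound for the binomial bounds its probability by $\exp(-c'm_n\L_2 n)$, summable over all~$n$; and $\{r_n<m_n\}$ occurs only finitely often almost surely, by an almost-sure lower bound $r_n\gtrsim(\L n)^{d-1-\epsilon}$ obtainable from $r_n(d)\eqd R_n(d-1)$ and $\E R_n(d-1)\sim(\L n)^{d-1}/(d-1)!$ via a second-moment argument along a subsequence (the care required being that $r_n$ is not monotone). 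Borel--Cantelli then yields~(b).

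For the lower bounds: rounding a hypothetical uncovered point with coordinate-sum $\le\ell$ up to the nearest lattice point --- which only enlarges it, hence keeps it uncovered, while raising $x_+$ by at most~$d$ --- and union-bounding over the $O((\L n)^d)$ lattice points with coordinate-sum $\le\ell+d$ gives $\P(F_n^-\le\ell)\le C(\L n)^d\exp(-ne^{-\ell-d})$. With $\ell=\L n-a\L_3 n$ this is $\asymp(\L n)^d\exp(-e^{-d}(\L_2 n)^a)\to0$ for every $a>1$, which already gives the first display of~(a) (take $a=3$). This per-$n$ estimate is not summable over all~$n$, so for the almost-sure statement~(c1) I would interpolate using that $F^-$ has nondecreasing paths (\refL{L:-}(c)): along a subsequence $n_k$ chosen to grow slowly enough that $\L n_{k+1}-\L n_k=o(\L_3 n_k)$ yet fast enough that $\sum_k(\L n_k)^d\exp(-e^{-d}(\L_2 n_k)^2)<\infty$ --- both arrangeable --- Borel--Cantelli gives $F_{n_k}^->\L n_k-2\L_3 n_k$ for all large~$k$, and then for $n\in[n_k,n_{k+1})$,
\[
\L n-3\L_3 n\le\L n_{k+1}-3\L_3 n_k\le\L n_k-2\L_3 n_k<F_{n_k}^-\le F_n^-,
\]
which is~(c1).

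I expect part~(c2) to be the main obstacle, because at that scale the covering bound exceeds~$1$ and the relevant events are not independent. I would reduce to the running maximum of the first coordinate, $F_n^-\le B_n^+(1)=\max_{i\le n}X_1^{(i)}$, so that it suffices to produce infinitely many~$n$ with $B_n^+(1)\le\L n-\L_3 n$, and exploit the record structure of the univariate sequence $(X_1^{(i)})_{i\ge1}$: writing $\tau_m$ for the time of its $m$th record and $V_m:=X_1^{(\tau_m)}$, the increments $V_{m+1}-V_m$ are i.i.d.\ $\mathrm{Exp}(1)$ (so $V_m\sim m$ almost surely), while conditionally on $(V_m)_m$ the gaps $\tau_{m+1}-\tau_m$ are geometric with mean $e^{V_m}$. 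Let $A_m$ be the event that the $(m+1)$st record is delayed enough that $V_m\le\L(\tau_{m+1}-1)-\L_3(\tau_{m+1}-1)$; on $A_m$ the index $n=\tau_{m+1}-1$ witnesses $B_n^+(1)=V_m\le\L n-\L_3 n$. A short computation with the geometric tail shows that, relative to the filtration generated by $(V_m)_m$ and the earlier gaps, $\P(A_m\mid\cdot)\gtrsim V_m^{-1}\asymp m^{-1}$, so $\sum_m\P(A_m\mid\cdot)=\infty$ almost surely; L\'evy's conditional form of the Borel--Cantelli lemma then shows that $A_m$ occurs for infinitely many~$m$ almost surely, which is~(c2). (One may alternatively first reduce~(c2) to showing that the probability in question is positive, since the event that $F_n^-\le\L n-\L_3 n$ for infinitely many~$n$ is invariant under finite permutations of $(X^{(i)})_i$ and hence has probability $0$ or~$1$ by the Hewitt--Savage law; but the conditional Borel--Cantelli argument appears to be the most direct route.)
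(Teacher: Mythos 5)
Your overall plan — treat the upper and lower bounds on $F_n^-$ separately, getting the upper bounds from \refL{L:-} and extreme-value asymptotics, and the lower bounds from a finite covering of the uncountable union in~\eqref{union} — matches the paper's structure in \refS{S:-}.  Several of your arguments are genuine alternatives, and one is actually an improvement; one has a real gap.

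\textbf{The lower bound on $F_n^-$.}  Your ``round the uncovered point up to the nearest integer lattice point'' device is a cleaner substitute for the paper's \refL{L:geo}: the ceiling raises $x_+$ by at most $d$, stays in the record-setting region (it only enlarges the orthant that must be empty), and reduces the uncountable union to the $O((\L n)^d)$ lattice points with $i_+\le\ell+d$.  The resulting bound
$\P(F_n^-\le\ell)\le C(\L n)^d\exp(-ne^{-\ell-d})$
has inner exponent $\L n-\ell-d$, whereas the paper's \refP{P:-lower} only achieves $(1+o(1))\tfrac12(\L n-\ell)$; your bound therefore handles any coefficient $a>1$ of $\L_3 n$, while the paper needs $a>2$ (see \refR{R:3}).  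Your derivation of the first display of~(a) and of~(c1) from this bound, via the monotonicity of $F^-$ and a subsequence, is correct; in fact a dyadic subsequence $n_k=2^k$ already satisfies both of your requirements, so the more delicate $n_k\approx e^{\sqrt k}$ is unnecessary.  The second display of~(a) is the same as the paper's (the paper just uses $B^+_n(1)$ instead of $\min_j B^+_n(j)$).

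\textbf{Part~(c2).}  Here the paper simply applies the $d=1$ case of \refT{T:+}(c), which has been proved (following Kiefer) earlier in the paper, to $B^+_n(1)\ge F_n^-$.  Your self-contained re-derivation — conditioning on the univariate record values $(V_m)$, observing that the time $\tau_{m+1}-1$ immediately before the next record gives $B^+_n(1)=V_m$, and then showing via a geometric-tail computation that $\P(A_m\mid\cF_m)\gtrsim V_m^{-1}\asymp m^{-1}$ and invoking L\'evy's conditional Borel--Cantelli — is a valid alternative route.  It is more work than citing the already-established one-dimensional LIL, but the argument goes through (and your Hewitt--Savage remark is a correct backup reduction).

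\textbf{Part~(b) — a genuine gap.}  Your split into $c>1/d$ and $0<c\le 1/d$ is sensible, and the $c>1/d$ argument via $\min_j B_n^+(j)$, dyadic blocking, and monotonicity is correct: $\P(\min_j B^+_{2^{k+1}}(j)\ge\L 2^k+c\L_2 2^k)\le(2(k\ln 2)^{-c})^d$ is summable precisely when $cd>1$.  The problem is in the $0<c\le 1/d$ case.  Your inclusion
$\{F_n^-\ge\L n+c\L_2 n\}\subseteq\{r_n<m_n\}\cup\{\#\{k\le n:X_+^{(k)}>\L n+c\L_2 n\}\ge m_n\}$
with $m_n=\lceil(\L n)^{d-1-\epsilon}\rceil$ is fine, but finishing it requires the almost-sure lower bound $r_n\gtrsim(\L n)^{d-1-\epsilon}$, and this is not available.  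The equality in distribution $r_n(d)\eqd R_n(d-1)$ holds only marginal-by-marginal, not as processes (the paper explicitly flags this after \refD{D:record}); $r_n$ is not monotone; and a second-moment (Chebyshev) bound $\P(r_n<\tfrac12\E r_n)=O((\L n)^{-(d-1)})$ is not even summable along a dyadic subsequence when $d=2$, and for no $d$ does it let you interpolate between subsequence points.  Indeed, for $d=3,4$ the statement $r_n\gtrsim(\L n)^{d-1-\epsilon}$ a.s.\ is essentially the conjecture recorded in \refR{r as}(b), which the paper establishes only for $d\ge5$ (\refT{T:as}(c)).  The paper's proof of~(b) avoids this entirely: it proves the much weaker but sufficient \refT{T:r} (namely $r_n\gtrsim(\L n)/(d\L_2 n)$ a.s., obtained by a clever argument combining $B^+_n(1)$ with the already-established upper bound $W_n\le(1+\epsilon)d\L_2 n$ from~(c1) and \refC{C:+}(b), not by any second-moment argument), uses \refL{L:-}(b) with a \emph{fixed} $m>1/c$, and cites Kiefer \cite[Thm.~1, (3.1)]{Kiefer(1972)} for the fixed-$m$ LIL.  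Separately, your claim that $\exp(-c'm_n\L_2 n)$ is ``summable over all $n$'' fails for $d=2$: there $m_n\L_2 n=(\L n)^{1-\epsilon}\L_2 n=o(\L n)$, so the terms are $n^{-o(1)}$ and the series diverges; one must pass to a subsequence and use monotonicity, which you do not do.  To repair~(b) you would either need to prove \refT{T:r} (or an analogue) rather than attempt a direct second-moment bound on $r_n$, or find a route that avoids a polylogarithmic lower bound on $r_n$ altogether.

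Everything else checks out, and the rounding lemma is a nice simplification worth keeping.
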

\medskip

\refT{T:-} gives rise immediately to the following succinct corollary.
\medskip

\begin{corollary}
\label{C:-}
Consider the process $F^-$ defined at~\eqref{-+}.
\medskip

{\rm (a)~\underline{Typical behavior of $F^-$}:}
\[
\frac{F_n^- - \L n}{\L_2 n} \Pto 0.
\]

{\rm (b)~\underline{Almost sure behavior for $F^-$}:\ }If $d \geq 2$, then
\[
\lim \frac{F_n^- - \L n}{\L_2 n} = 0\mbox{\rm \ \as}
\]
\end{corollary}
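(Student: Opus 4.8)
The plan is to read the corollary off directly from \refT{T:-}; no new probabilistic estimate is required, only a routine translation of the boundary-crossing statements there into statements about convergence in probability and about $\limsup$ and $\liminf$.

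For part~(a), I would fix $\eps > 0$ and bound
\[
\P\bigpar{|F_n^- - \L n| > \eps \L_2 n}
\le \P\bigpar{F_n^- \ge \L n + \eps \L_2 n} + \P\bigpar{F_n^- \le \L n - \eps \L_2 n}.
\]
The first term tends to~$0$ by the second assertion of \refT{T:-}(a), applied with $c_n := \eps \L_2 n \to \infty$. For the second term, I would observe that $\eps \L_2 n \ge 3 \L_3 n$ for all large~$n$, so that the event there is eventually contained in $\{F_n^- \le \L n - 3 \L_3 n\}$, whose probability tends to~$0$ by the first assertion of \refT{T:-}(a). This yields $\frac{F_n^- - \L n}{\L_2 n} \Pto 0$, and note that this argument needs no restriction on~$d$.

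For part~(b) (with $d \ge 2$), I would handle the $\limsup$ and the $\liminf$ separately. For the $\limsup$: by \refT{T:-}(b), for each rational $c > 0$ we have $\P(F_n^- \ge \L n + c \L_2 n\ \io) = 0$; intersecting this countable family of almost-sure events shows that, almost surely, $\limsup_n \frac{F_n^- - \L n}{\L_2 n} \le c$ for every rational $c > 0$, hence $\limsup_n \frac{F_n^- - \L n}{\L_2 n} \le 0$ a.s. For the $\liminf$: by \refT{T:-}(c1), almost surely $F_n^- > \L n - 3 \L_3 n$ for all large~$n$, so, since $\L_3 n / \L_2 n \to 0$, we get
\[
\liminf_n \frac{F_n^- - \L n}{\L_2 n} \ge \liminf_n \frac{-3 \L_3 n}{\L_2 n} = 0 \mbox{\ \as}
\]
Combining the two bounds gives $\lim_n \frac{F_n^- - \L n}{\L_2 n} = 0$ a.s.

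There is essentially no obstacle, since all the substantive work already resides in \refT{T:-}. The only subtlety worth flagging is the hypothesis $d \ge 2$ in part~(b): it is inherited solely from \refT{T:-}(b), and for $d = 1$ the conclusion of~(b) in fact fails, because then $F_n^- = F_n^+$ and $\limsup_n \frac{F_n^- - \L n}{\L_2 n} = 1$ by \refC{C:+}(b).
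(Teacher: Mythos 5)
Your proof is correct and is exactly the routine unpacking the paper has in mind when it says \refT{T:-} ``gives rise immediately'' to this corollary (the paper supplies no separate proof). Your closing remark that part~(b) genuinely fails for $d = 1$, because then $F_n^- = F_n^+$ and \refC{C:+}(b) gives $\limsup (F_n^- - \L n)/\L_2 n = 1$ almost surely, is a correct and useful observation.
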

\medskip

We come now to our main focus, the process~$W$.  The results in \refT{T:W} follow directly from Corollaries~\ref{C:+} and~\ref{C:-}.
\medskip

\begin{theorem}
\label{T:W}
Consider the process $W$ defined at~\eqref{W}.
\medskip

{\rm (a)~\underline{Typical behavior of $W$}:}
\[
\frac{W_n}{\L_2 n} \Pto d - 1.
\]

{\rm (b)~\underline{Almost sure behavior for~$W$}:\ }If $d \geq 2$, then
\[
\liminf \frac{W_n}{\L_2 n} = d - 1 < d =
\limsup \frac{W_n}{\L_2 n} \mbox{\rm \ \as},
\]
and, in particular,
\[
W_n = \Theta(\L_2 n) \mbox{\rm \ \as}
\]
\end{theorem}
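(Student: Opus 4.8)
The plan is to deduce both parts directly from the decomposition $W_n = F_n^+ - F_n^-$ of \refD{D:W} together with Corollaries~\ref{C:+} and~\ref{C:-}, which already pin down the behavior of $F^+$ and $F^-$ on the scale $\L_2 n$. First I would divide through by $\L_2 n$ (legitimate once $n$ is large, since $\L_2 n > 0$ for $n \geq 3$) to obtain the elementary identity
\[
\frac{W_n}{\L_2 n} = \frac{F_n^+ - \L n}{\L_2 n} - \frac{F_n^- - \L n}{\L_2 n}.
\]

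For part~(a), I would note that the first term on the right tends to $d - 1$ in probability by \refC{C:+}(a) and the second term tends to $0$ in probability by \refC{C:-}(a); since convergence in probability is preserved under subtraction (equivalently, apply the continuous-mapping principle to the pair formed by the two terms, using that marginal convergence in probability to constants is joint), the left side converges in probability to $d - 1$, which is the claim.

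For part~(b), assuming $d \geq 2$, I would invoke \refC{C:-}(b) to say that the second term above is $o(1)$ almost surely, so that on an event of probability one the sequences $W_n / \L_2 n$ and $(F_n^+ - \L n)/\L_2 n$ have identical $\liminf$ and identical $\limsup$; \refC{C:+}(b) then gives $\liminf W_n / \L_2 n = d - 1$ and $\limsup W_n / \L_2 n = d$ almost surely. Finally, $W_n = \Theta(\L_2 n)$ almost surely follows because $d \geq 2$ makes the $\liminf$ at least $1$ (hence positive) while the $\limsup$ is the finite number $d$. I do not anticipate any real obstacle here: the only points needing (routine) attention are that convergence in probability passes through the subtraction in~(a), and that adding an almost surely $o(1)$ term leaves $\liminf$ and $\limsup$ unchanged in~(b); all the substantive work has already been carried out in establishing Corollaries~\ref{C:+} and~\ref{C:-}.
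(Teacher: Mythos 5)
Your proposal is correct and is exactly the paper's argument: the paper states that Theorem~\ref{T:W} "follows directly from Corollaries~\ref{C:+} and~\ref{C:-}," and your decomposition $W_n/\L_2 n = (F_n^+ - \L n)/\L_2 n - (F_n^- - \L n)/\L_2 n$ together with the subtraction-of-limits reasoning is precisely what is intended.
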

\medskip

\begin{remark}
(a)~When $d = 1$, at each time $n \geq 1$ there is exactly one current record, $F_n^+ = F_n^-$ is
the value of that record, $\mbox{RS}_n$ is the closed
interval $[F_n^+, \infty)$, and $W_n = 0$.

(b)~Using \refR{R:interval+}, \refT{T:W}(b) can be strengthened to the conclusion that the set of limit points of the sequence $W_n / \L_2 n$ is almost surely the closed interval $[d - 1, d]$.

(c)~\refT{T:W}(b) has the following immediate corollary.  If, for some positive integer $d_0$, processes $W(d)$ corresponding to dimension~$d$, $d = d_0, d_0 + 1, \dots$, are defined on a common probability space (regardless of any dependence among the processes), then
\begin{equation}
\label{double limit}
\lim_{d \to \infty} \limsup_{n \to \infty} \frac{W_n(d)}{(d - 1) \L_2 n} = 1 = \lim_{d \to \infty} \liminf_{n \to \infty} \frac{W_n(d)}{(d - 1) \L_2 n} \mbox{\rm \ \ \as}
\end{equation}
That is, roughly speaking, for time~$n$ large relative to large dimension~$d$, the width $W_n(d)$ almost surely concentrates near $(d - 1) \L n$.

(d)~We could have used~$d$ in the denominators of~\eqref{double limit}, but we chose $d - 1$ because of \refT{T:W}(a).  A remark of a somewhat similar flavor as~(b) for convergence in probability is the following.  If, for some integer $d_0 \geq 2$, processes $W(d)$ corresponding to dimension~$d$, $d = 2, \ldots, d_0$, are defined on a common probability space (regardless of any dependence among the processes), then
\[
\max_{2 \leq d \leq d_0} \left| \frac{W_n(d)}{(d - 1) \L_2 n} - 1 \right| \Pto 0.
\]
We have not investigated whether this result might extend to dimension~$d_0$ growing with~$n$.
\end{remark}

\subsection{Outline of paper}
\label{S:outline}
The
stochastic process $F^+$ is studied in \refS{S:+}, where we prove \refT{T:+}.  We treat the
process $F^-$ in \refS{S:-}, where we prove \refT{T:-}.  In \refS{S:counts} we assess asymptotic behavior of the record counts $R_n$, $r_n$, and $\beta_n$ introduced following \refD{D:record} as preparation for \refS{S:time}, where we produce versions of our main results concerning the record-setting frontier
process~$F$ when time is measured in the number of records (rather than observations $X^{(i)}$) generated.

\section{The process $F^+$}
\label{S:+}
This
section is devoted to the proof of \refT{T:+} concerning the process $F^+$ defined at \eqref{-+}.  In light of the characterization provided by \refL{L:+}, \refT{T:+} follows from results of~\cite{Kiefer(1972)}.  Kiefer is concerned with behavior of the law of the iterated logarithm type for the empirical distribution function and sample $p_n$-quantiles for a sequence of independent uniform$(0, 1)$ random variables, with $p_n > 0$ and $p_n \downarrow 0$, but notes that his results ``may easily be translated into results for general laws.''  Since we are concerned here with a sequence $X^{(1)}_+, X^{(2)}_+, \dots$ from the Gamma$(d, 1)$ distribution and with (only) the $p_n = 1 / n$ \emph{upper} quantile, for completeness and the reader's convenience we distill Kiefer's proof(s) for our special case.

\begin{proof}[Proof of \refT{T:+}]
(a)~This is elementary.  We have
\begin{align*}
\lefteqn{\hspace{-.5in}\P(F^+_n - [\L n + (d - 1) \L_2 n - \L((d - 1)!)] \leq x)} \\
&= \left[ \P\left( X^{(1)}_+ - [\L n + (d - 1) \L_2 n - \L((d - 1)!)] \leq x \right) \right]^n \\
&= \left[ \P\left(X^{(1)}_+ \leq \L n + (d - 1) \L_2 n  - \L((d - 1)!) + x \right) \right]^n \\
&= \left( 1 - \sum_{j = 0}^{d - 1} e^{- \gl} \frac{\gl^j}{j!} \right)^n
= \left[ 1 - (1 + o(1)) e^{- \gl} \frac{\gl^{d - 1}}{(d - 1)!} \right]^n \\
&= \left[ 1 - (1 + o(1)) n^{-1} e^{- x} \right]^n \to e^{-e^{-x}} = \P(G \leq x),
\end{align*}
where $\gl := \L n + (d - 1) \L_2 n - \L((d - 1)!) + x$.
\ignore{
This sentence gave further details in the first proof in~(b) below.
{\bf Suppress the next sentence in final draft:}
Note that
\begin{align*}
\P(X^{(n)}_+ > b_n)
&= \P(X^{(1)}_+ > b_n) = \sum_{j = 0}^{d - 1} e^{- b_n} \frac{b_n^j}{j!} \\
&\sim e^{- b_n} \frac{b_n^{d - 1}}{(d - 1)!}
\sim [(d - 1)!]^{-1} n^{-1} (\L n)^{d - 1 -c}.
\end{align*}
}

(b)~Kiefer describes two proofs.  The first proof observes, for
any sequence $b_n \to \infty$ which
is ultimately monotone nondecreasing, that
\[
\{F^+_n > b_n\mbox{\ \io}\} = \{X^{(n)}_+ > b_n\mbox{\ \io}\}
\]
and applies the Borel--Cantelli lemmas to the sequence of independent events $\{X^{(n)}_+ > b_n\}$ with
$b_n \equiv \L n + c \L_2 n$.
The second proof exploits the nondecreasingness of the sample paths of the process
$F^+_{\cdot} = \Bh_{1, \cdot}$ noted in \refL{L:-} and proceeds as follows.  If $(b_n)$ is ultimately monotone nondecreasing and $(n_j)$ is any strictly increasing sequence of positive
integers, then
\[
\{F_{n_j, n_{j + 1}}^+ \geq b_{n_{j + 1}}\mbox{\rm\ \io$(j)$}\}
 \subseteq \{F_n^+ \geq b_n\mbox{\rm\ \io$(n)$}\}
 \subseteq \{F_{n_{j + 1}}^+ \geq b_{n_j}\mbox{\rm\ \io$(j)$}\},
\]
where we note that the random variables
\begin{equation}
\label{B2}
F_{n_j, n_{j + 1}}^+ \equiv \max\{X^{(k)}_+: n_j < k \leq n_{j + 1}\}
\end{equation}
are independent.  Now choose $b_n \equiv \L n + c \L_2 n$ and $n_j \equiv 2^j$ and apply the Borel--Cantelli lemmas.
\ignore{
{\bf Suppress the next two sentences in final draft:}
Note that if $c > d$, then
\begin{align*}
\P(F_{n_{j + 1}}^+ \geq b_{n_j})
&= 1 - \P(F_{n_{j + 1}}^+ < b_{n_j}) \\
&= 1 - \left[ \P\left( X^{(1)}_+ < b_{n_j} \right) \right]^{n_{j + 1}} \\
&= 1 - \left( 1 - \sum_{k = 0}^{d - 1} e^{-b_{n_j}} \frac{b_{n_j}^k}{k!} \right)^{n_{j + 1}} \\
&= 1 - \left[ 1 - (1 + o(1)) e^{-b_{n_j}} \frac{b_{n_j}^{d - 1}}{(d - 1)!} \right]^{n_{j + 1}} \\
&= 1 - \left[ 1 - (1 + o(1)) [(d - 1)!]^{-1} n_j^{-1} (\L n_j)^{d - 1 - c} \right]^{n_{j + 1}} \\
&= 1 - \exp \left[ - (1 + o(1)) \frac{n_{j + 1} (\L n_j)^{d - 1 - c}}{(d - 1)! n_j} \right] \\
&= 1 - \exp \left[ - (1 + o(1)) \frac{2 (\L 2)^{d - 1 - c}}{(d - 1)!} j^{d - 1 - c} \right] \\
&\sim \frac{2 (\L 2)^{d - 1 - c}}{(d - 1)!} j^{d - 1 - c}
\end{align*}
is summable; and that if $d - 1 < c \leq d$, then the series with $j$th term equal~to
\begin{align*}
\lefteqn{\hspace{-.3in}\P(F_{n_j, n_{j + 1}}^+ \geq b_{n_{j + 1}})} \\
&= 1 - \P(F_{n_{j + 1} - n_j}^+ < b_{n_{j + 1}}) \\
&= 1 - \left[ \P\left( X^{(1)}_+ < b_{n_{j + 1}} \right) \right]^{n_{j + 1} - n_j} \\
&= 1 - \left( 1 - \sum_{k = 0}^{d - 1} e^{-b_{n_{j + 1}}} \frac{b_{n_{j + 1}}^k}{k!} \right)^{n_{j + 1} - n_j} \\
&= 1 - \left[ 1 - (1 + o(1)) e^{-b_{n_{j + 1}}} \frac{b_{n_{j + 1}}^{d - 1}}{(d - 1)!} \right]^{n_{j + 1} - n_j} \\
&= 1 - \left[ 1 - (1 + o(1)) [(d - 1)!]^{-1} n_{j + 1}^{-1} (\L n_{j + 1})^{d - 1 - c} \right]^{n_{j + 1} - n_j} \\
&= 1 - \exp \left[ - (1 + o(1)) \frac{(n_{j + 1} - n_j) (\L n_{j + 1})^{d - 1 - c}}{(d - 1)! n_{j + 1}} \right] \\
&= 1 - \exp \left[ - (1 + o(1)) \frac{(\L 2)^{d - 1 - c}}{2 (d - 1)!} (j + 1)^{d - 1 - c} \right] \\
&\sim \frac{(\L 2)^{d - 1 - c}}{2 (d - 1)!} j^{d - 1 - c}
\end{align*}
diverges.  [The assumption $c > d - 1$ is without loss of generality for proving the case $c \leq d$ of
\refT{T:+}(b).]
}

(c)~For the case $c < 0$ of outer-class bottom boundaries, we start with the observation that if $(b_n)$ is ultimately monotone nondecreasing and $(n_j)$ is any strictly increasing sequence of positive integers, then
\[
\{F_n^+ \leq b_n\mbox{\rm\ \io$(n)$}\}
 \subseteq \{F_{n_j}^+ \leq b_{n_{j + 1}}\mbox{\rm\ \io$(j)$}\}.
\]
We then choose $b_n \equiv \L n + (d - 1) \L_2 n - \L_3 n - \L((d-1)!) + c$ with $c < 0$ and
$n_j \equiv \lfloor e^{|c| j / 2} \rfloor$ and apply the
first Borel--Cantelli lemma.
\ignore{
{\bf Suppress the next sentence in final draft:}
Indeed, note that if $c < 0$, then
\begin{align*}
\P(F_{n_j}^+ \leq b_{n_{j + 1}})
&= \left[ \P\left( X^{(1)}_+ \leq b_{n_{j + 1}} \right) \right]^{n_j} \\
&= \left( 1 - \sum_{k = 0}^{d - 1} e^{-b_{n_{j + 1}}} \frac{b_{n_{j + 1}}^k}{k!} \right)^{n_j} \\
&= \left[ 1 - (1 + o(1)) e^{-b_{n_{j + 1}}} \frac{b_{n_{j + 1}}^{d - 1}}{(d - 1)!} \right]^{n_j} \\
&= \left[ 1 - (1 + o(1)) e^{|c|} n_{j + 1}^{-1} \L_2 n_{j + 1} \right]^{n_j} \\
&= \exp \left[ - (1 + o(1)) e^{|c|}\,\frac{n_j \L_2 n_{j + 1}}{n_{j + 1}} \right] \\
&= \exp \left[ - (1 + o(1)) e^{-|c| / 2} e^{|c|} \L j \right] = j^{- (1 + o(1)) e^{|c| / 2}}
\end{align*}
is summable.
}

For the case $c \geq 0$ of inner-class bottom boundaries, we start with the observation that if $(b_n)$ is ultimately monotone nondecreasing and $(n_j)$ is any strictly increasing sequence of positive integers, then, recalling the definition~\eqref{B2},
\begin{align*}
\lefteqn{\hspace{-.6in}\{F_{n_j}^+ \leq b_{n_{j + 1}}\mbox{\rm\ \aa$(j)$}\} \cap
\{F_{n_j, n_{j + 1}}^+ \leq b_{n_{j + 1}}\mbox{\rm\ \io$(j)$}\}} \\
&\subseteq \{F_{n_{j + 1}}^+ \leq b_{n_{j + 1}}\mbox{\rm\ \io$(j)$}\}
\subseteq \{F_n^+ \leq b_n\mbox{\rm\ \io$(n)$}\}.
\end{align*}
We then choose $b_n \equiv \L n + (d - 1) \L_2 n - \L_3 n - \L((d-1)!) + c$ with $c \geq 0$ and
$n_j \equiv \lfloor e^{\alpha j \L j} \rfloor$ with $\alpha > 1$ and apply the first Borel--Cantelli lemma to the events $\{F_{n_j}^+ > b_{n_{j + 1}}\}$ and the second Borel--Cantelli lemma to the independent events
$\{F_{n_j, n_{j + 1}}^+ \leq b_{n_{j + 1}}\}$.
\ignore{
{\bf Suppress the next sentence in final draft:}
Indeed, note that if $c \geq 0$, then
\begin{align*}
\P(F_{n_j}^+ > b_{n_{j + 1}})
&= 1 - \P(F_{n_j}^+ \leq b_{n_{j + 1}}) \\
&= 1 - \left[ \P\left( X^{(1)}_+ \leq b_{n_{j + 1}} \right) \right]^{n_j} \\
&= 1 - \left( 1 - \sum_{k = 0}^{d - 1} e^{-b_{n_{j + 1}}} \frac{b_{n_{j + 1}}^k}{k!} \right)^{n_j} \\
&= 1 - \left[ 1 - (1 + o(1)) e^{-b_{n_{j + 1}}} \frac{b_{n_{j + 1}}^{d - 1}}{(d - 1)!} \right]^{n_j} \\
&= 1 - \left[ 1 - (1 + o(1)) e^{-c} n_{j + 1}^{-1} \L_2 n_{j + 1} \right]^{n_j} \\
&= 1 - \exp \left[ - (1 + o(1)) e^{-c}\,\frac{n_j \L_2 n_{j + 1}}{n_{j + 1}} \right] \\
&\sim e^{-c}\,\frac{n_j \L_2 n_{j + 1}}{n_{j + 1}}
\sim e^{- (c + \alpha)} j^{- \alpha} \L j
\end{align*}
is summable, and when $c = 0$ [which we may assume without loss of generality for proving the case $c \geq 0$ of \refT{T:+}(c)] the series with $j$th term equal~to
\begin{align*}
\P(F_{n_j, n_{j + 1}}^+ \leq b_{n_{j + 1}})
&= \P(F_{n_{j + 1} - n_j}^+ \leq b_{n_{j + 1}}) \\
&= \left[ \P\left( X^{(1)}_+ \leq b_{n_{j + 1}} \right) \right]^{n_{j + 1} - n_j} \\
&= \left( 1 - \sum_{k = 0}^{d - 1} e^{-b_{n_{j + 1}}} \frac{b_{n_{j + 1}}^k}{k!} \right)^{n_{j + 1} - n_j} \\
&= \left[ 1 - [1 + O((j \L j)^{-1})] e^{-b_{n_{j + 1}}} \frac{b_{n_{j + 1}}^{d - 1}}{(d - 1)!} \right]^{n_{j + 1} - n_j} \\
&= \left[ 1 - [1 + O((j \L j)^{-1})] n_{j + 1}^{-1} \L_2 n_{j + 1} \right]^{n_{j + 1} - n_j} \\
&\sim [\alpha (j + 1) \L(j + 1)]^{-1}
\end{align*}
diverges, where the asymptotic equivalence holds because as $j \to \infty$ we have
\begin{align*}
\lefteqn{\hspace{-.1in}-\L \P(F_{n_j, n_{j + 1}}^+ \leq b_{n_{j + 1}})} \\
&= (n_{j + 1} - n_j) \times - \L \left[ 1 - [1 + O((j \L j)^{-1})] n_{j + 1}^{-1} \L_2 n_{j + 1} \right] \\
&= (n_{j + 1} - n_j) \left[ [1 + O((j \L j)^{-1})] n_{j + 1}^{-1} \L_2 n_{j + 1}
+ O(n_{j + 1}^{-2} (\L_2 n^{j + 1})^2) \right] \\
&= (n_{j + 1} - n_j) n_{j + 1}^{-1} \L_2 n_{j + 1} + o(1) \\
&= \L (j + 1) + \L_2 (j + 1) + \L \alpha + o(1).
\end{align*}
}
\end{proof}

\section{The process $F^-$}
\label{S:-}

\subsection{Towards a stochastic lower bound on $F_n^-$}
\label{S:TLB-}

To prove \refT{T:-} we need a stochastic lower bound on $F_n^-$ to complement the upper bound of \refL{L:-}.  For this we use the definitions of the
frontier $F_n$ and the closed record-setting region $\mbox{RS}_n$ to argue as follows.  For $x \in \bbR^d$, let
\[
O^+_x := \{y \in \bbR^d: y \succ x\}
\]
denote the open positive orthant determined by~$x$.  For any set $S \subseteq \bbR^d$, let $N_n(S)$ denote the number of observations $X^{(i)}$ with $1 \leq i \leq n$ that
fall in~$S$.  Then
\begin{align}
\{F_n^- \leq b\}
&= \{x_+ \leq b\mbox{\ for some $x \in F_n$}\}
= \{x_+ \leq b\mbox{\ for some $x \in \mbox{RS}_n$}\} \nonumber \\
&= \{x_+ \leq b\mbox{\ for some $x \geq 0$ satisfying $x \not\prec X^{(i)}$ for all $1 \leq i \leq n$}\} \nonumber \\
&= \bigcup_{x \geq 0:\,x_+ \leq b} \{N_n(O^+_x) = 0\} \nonumber \\
\label{union}
&= \bigcup_{x \geq 0:\,x_+ = b} \{N_n(O^+_x) = 0\}.
\end{align}
The difficulty with upper-bounding the probability of this event is of course that the last union is uncountable.  In the next subsection we produce a geometric lemma whose application effectively bounds the uncountable union by a finite union.

\begin{figure}[htb]
\begin{tikzpicture}[scale=7]
%
%
\draw[->,thick,color=black] (0,0)--(0,1);
\draw[->,thick,color=black] (0,0)--(1,0);
%
%
\draw (.5,-.05) node[color=black] {\footnotesize $x_1$};
\draw (-.05,.5) node[color=black] {\footnotesize $x_2$};
%
%
\draw[color=black, dashed, thick](.9,0)--(0,.9);
%
%
\draw[color=black, dashdotted, thick](.8,0)--(0,.8);
%
%
\filldraw [black]
(.9,.0) circle (.175pt)
(.8,.1) circle (.175pt)
(.7,.2) circle (.175pt)
(.6,.3) circle (.175pt)
(.5,.4) circle (.175pt)
(.4,.5) circle (.175pt)
(.3,.6) circle (.175pt)
(.2,.7) circle (.175pt)
(.1,.8) circle (.175pt)
(.0,.9) circle (.175pt)
%
%
(.671,.129) circle (.175pt)
;
%
%
\draw[color=black, thick, dotted] (.671,.129)--(1,.129);
\draw[color=black, thick, dotted] (.671,.129)--(.671,1.);
\draw(.80,.25) node[color=black] {\normalsize $O_v^+$};
\draw(.64,.11) node[color=black] {\small $v$};


\draw (.40,.78) node[color=black] {\scriptsize $x_+=2m-(d-1)$};
\draw (.22,.37) node[color=black] {\scriptsize $x_+=2m-2(d-1)$};
%
%
%
%
\draw [->,color=black](.210,.78) arc (90:100:4mm);
%
%
\draw [->,color=black](.210,.4) arc (120:110:6mm);

\end{tikzpicture}

\caption{
Geometric lemma illustrated for $d = 2$.  Given $v$ with $v_+ = 2 m - 2(d - 1)$, the orthant $O^+_v$ determined by~$v$ must contain a point~$i$ with integer coordinates on the hyperplane $x^+ = 2 m - (d - 1)$.}
\label{fig:geomlemma}
\end{figure}
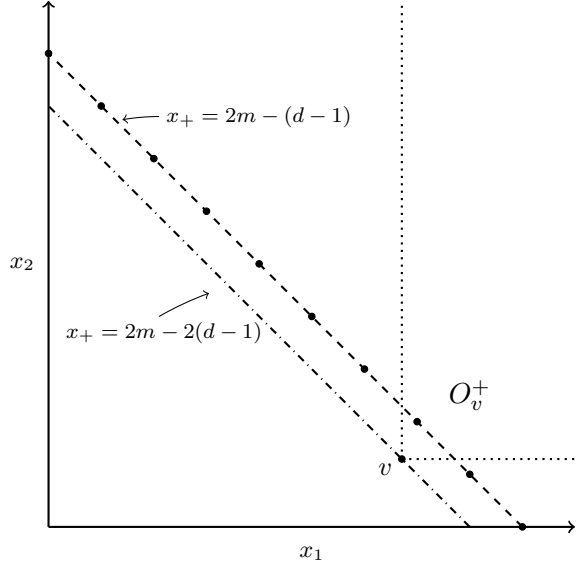

\subsection{A geometric lemma}
\label{S:geo}

Consider the (uncountable) union of positive orthants whose vertices lie on the
hyperplane $x_+ = 2 m - 2 (d - 1)$ in $\bbR^d,$ where $m \geq d - 1$ is an integer.
We can also form a finite union of positive orthants whose vertices lie on the hyperplane $x+ = 2m - (d-1)$
situated a bit further from the origin.
Our key geometric lemma guarantees that the uncountable union contains the finite union (see \refF{fig:geomlemma}).

\begin{lemma}
\label{L:geo}
Given a positive integer $m \geq d - 1$, and $0 \leq x \in \bbR^d$ with
\begin{equation}
\label{x+}
x_+ = 2 m - 2 (d - 1),
\end{equation}
there exists $0 \leq i \in \bbZ^d$ with
\begin{equation}
\label{i+}
i_+ = 2 m - (d - 1)
\end{equation}
such that
\begin{equation}
\label{orth}
O^+_i \subseteq O^+_x.
\end{equation}
\end{lemma}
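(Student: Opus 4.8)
The plan to prove \refL{L:geo} is to reduce the orthant-containment condition \eqref{orth} to a coordinatewise inequality and then carry out a careful integer rounding. First I would record the elementary equivalence
\[
O^+_i \subseteq O^+_x \iff x_j \le i_j \text{ for all } j \in \{1,\dots,d\}.
\]
Indeed, if $i \ge x$ coordinatewise and $y \succ i$, then $y_j > i_j \ge x_j$ for every $j$, so $y \succ x$; conversely, if $i_k < x_k$ for some~$k$, one exhibits a point $y \in O^+_i \setminus O^+_x$ by choosing $y_k \in (i_k, x_k)$ and the remaining coordinates arbitrarily large. So it suffices to produce $0 \le i \in \bbZ^d$ with $i_+ = 2m - (d-1)$ and $i_j \ge x_j$ for all~$j$.

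Next I would take the coordinatewise ceiling as a first candidate: set $i^{(0)}_j := \ceil{x_j}$. Then $i^{(0)} \in \bbZ^d$, $i^{(0)} \ge 0$ (since $x \ge 0$), and $i^{(0)}_j \ge x_j$ for every~$j$. The crux is to control the overshoot of the coordinate sum. Write $s := i^{(0)}_+ - x_+ = \sum_{j=1}^d (\ceil{x_j} - x_j)$. Each summand lies in $[0,1)$, so $0 \le s < d$; and since $x_+ = 2m - 2(d-1)$ is an integer, $s$ is an integer. Hence $s \in \{0, 1, \dots, d-1\}$, and in particular $i^{(0)}_+ = 2m - 2(d-1) + s \le 2m - (d-1)$.

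Finally I would top up a single coordinate to hit the target sum exactly. The deficit $t := (2m - (d-1)) - i^{(0)}_+ = (d-1) - s$ is a nonnegative integer, so defining $i$ to be $i^{(0)}$ with $t$ added to its first coordinate, we get $i \in \bbZ^d$, $i \ge i^{(0)} \ge 0$, $i_j \ge x_j$ for all~$j$, and $i_+ = i^{(0)}_+ + t = 2m - (d-1)$, which is \eqref{i+}; by the first paragraph this yields \eqref{orth}. The hypothesis $m \ge d-1$ enters only to guarantee $x_+ = 2m - 2(d-1) \ge 0$ (so that such an~$x$ exists) and $i_+ = 2m - (d-1) \ge 0$.

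There is no serious obstacle here; the one point that must be gotten right is the double observation that the total ceiling gap~$s$ is simultaneously an integer (because $x_+$ is) and strictly less than~$d$, so it never exceeds the available budget $d-1$, together with the fact that dumping the leftover budget $(d-1) - s$ onto one coordinate preserves both integrality and the coordinatewise domination $i \ge x$.
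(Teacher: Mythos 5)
Your proof is correct and follows essentially the same route as the paper: take the coordinatewise ceiling, observe that the overshoot $s = \sum_j(\lceil x_j\rceil - x_j)$ is an integer in $\{0,\dots,d-1\}$ because $x_+$ is an integer and each summand lies in $[0,1)$, and then make up the remaining deficit of $d-1-s$ by adding to coordinates. The only cosmetic difference is that the paper distributes the leftover budget by adding $1$ to $d-1-s$ distinct coordinates, while you dump it all onto the first coordinate; both are equally valid.
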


\begin{proof}
We need to prove the existence of $0 \leq i \in \bbZ^d$ satisfying~\eqref{i+} and~\eqref{orth} (i.e.,  $x \leq i$).  The frugal choice $0 \leq i' \in \bbZ^d$ defined by
\[
i'_j := \left\lceil x_j \right\rceil, \quad j = 1, \dots, d,
\]
satisfies~\eqref{orth} but not necessarily~\eqref{i+}.  However, using~\eqref{x+} we observe that $i'_+$ is at least the integer
\[
x_+ = 2 m - 2 (d - 1)
\]
and strictly less than the integer $2 m - 2 (d - 1) + d = 2 m - (d - 2)$, \ie, is at most $2 m - (d - 1)$.  Thus we need only (arbitrarily) ``sweeten'' (\ie, add~$1$ to) precisely $2 m - (d - 1) - i'_+ \in \bbZ \cap [0, d - 1]$ of the entries $i'_j$ to obtain~$i$ with the desired properties.
\end{proof}

\subsection{A stochastic lower bound on $F_n^-$}
\label{S:LB-}

Let $0 \leq b < \L n$.
Returning to~\eqref{union}, we now see from \refL{L:geo} with $t = \L n \geq 0$ and
\[
m = \left\lceil \frac{(d - 1) \L n}{\L n - b} \right\rceil \geq d - 1,
\]
together with homogeneity
[$O^+_{c y} = c\,O^+_y$ for $0 \leq y \in \bbR^d$ and $0 \leq c \in \bbR^1$], that
\begin{align*}
\{F_n^- \leq b\}
&= \bigcup_{x \geq 0:\,x_+ = b} \{N_n(O^+_x) = 0\} \\
&\subseteq \bigcup_{x \geq 0:\,x_+ = \left( 1 - \frac{d - 1}{m} \right) t} \{N_n(O^+_x) = 0\} \\
&\subseteq \bigcup_{0 \leq i \in \bbZ^d:\,i_+ = 2 m - (d - 1)}
\left\{ N_n\left( O^+_{\frac{t}{2 m} i} \right) = 0 \right\},
\end{align*}
and so by finite subadditivity
\[
\P(F_n^- \leq b)
\leq \sum_{0 \leq i \in \bbZ^d:\,i_+ = 2 m - (d - 1)} \P\left( N_n \left( O^+_{\frac{t}{2 m} i} \right) = 0 \right).
\]
But
\begin{align*}
\P\left( N_n \left( O^+_{\frac{t}{2 m} i} \right) = 0 \right)
&= \P\left( X \notin O^+_{\frac{t}{2 m} i} \right)^n
= \left[ 1 - \P\left( X \in O^+_{\frac{t}{2 m} i} \right) \right]^n \\
&= \left[1 - \exp\left( - \tfrac{t}{2 m} i_+ \right) \right]^n \\
&= \left[ 1 - \exp\left\{ - \left( 1 - \tfrac{d - 1}{2 m} \right) \L n \right\} \right]^n \\
&= \left[1 - n^{- \left( 1 - \frac{d - 1}{2 m} \right)} \right]^n \\
&\leq \exp\!\left(- n^{\frac{d - 1}{2 m}} \right).
\end{align*}
Since the cardinality of $\{0 \leq i \in \bbZ^d:\,i_+ = 2 m - (d - 1)\}$ equals
\[
{2 m \choose d - 1} \leq \frac{(2 m)^{d - 1}}{(d - 1)!}
\]
we conclude that
\begin{align*}
\P(F_n^- \leq b)
&\leq \frac{(2 m)^{d - 1}}{(d - 1)!} \exp\!\left(- n^{\frac{d - 1}{2 m}} \right) \\
&\leq (1 + o(1)) \frac{[2 (d - 1)]^{d - 1}}{(d - 1)!}
\left( 1 - \frac{b}{\L n} \right)^{- (d - 1)} \\
&{} \qquad \qquad \times \exp\!\left[- \exp\left\{ (1 + o(1)) \half (\L n - b) \right\} \right],
\end{align*}
where the last inequality holds assuming that $b = b_n = (1 + o(1)) \L n$ as $n \to \infty$.

We summarize and simplify the bound we have derived in the next proposition, where we assume further that $\L n - b_n \to \infty$.  The bound is the key to the proof of the first assertion in \refT{T:-}(a) and
of \refT{T:-}(c1).
\begin{proposition}[\underline{Stochastic lower bound on $F_n^-$}]
\label{P:-lower}
Let $0 \leq b_n < \L n$ with $b_n = (1 - o(1)) \L n$ and $\L n - b_n \to \infty$.
Then
\[
\P(F_n^- \leq b_n)
\leq
(\L n)^{d - 1} \exp\!\left[- \exp\left\{ (1 + o(1)) \half (\L n - b_n) \right\} \right],
\]
\nopf
\end{proposition}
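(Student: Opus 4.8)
The plan is to recognize that essentially all of the work behind this proposition is already contained in the display chain that precedes its statement: the reduction, via \refL{L:geo} and homogeneity, of the uncountable union in~\eqref{union} to the \emph{finite} union of no-observation events $\{N_n(O^+_{(t/2m)i}) = 0\}$ over lattice points $i$ with $i_+ = 2m-(d-1)$, followed by finite subadditivity together with the exact single-orthant estimate $\P(N_n(O^+_{(t/2m)i}) = 0) \le \exp(-n^{(d-1)/(2m)})$ and the binomial bound $\binom{2m}{d-1} \le (2m)^{d-1}/(d-1)!$ on the number of terms. So the proof will be short: it only remains to simplify the resulting bound and absorb lower-order factors.

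Concretely, I would take as the starting point the inequality already derived, namely that for $b = b_n = (1+o(1))\L n$,
\[
\P(F_n^- \le b_n) \le (1+o(1))\,\frac{[2(d-1)]^{d-1}}{(d-1)!}\left(1 - \frac{b_n}{\L n}\right)^{-(d-1)}\exp\!\left[-\exp\left\{(1+o(1))\half(\L n - b_n)\right\}\right].
\]
The hypothesis $b_n = (1-o(1))\L n$ is exactly the condition under which that bound is valid, so nothing new is needed there. Now write $w_n := \L n - b_n$. The extra hypothesis $w_n \to \infty$ lets me discard the polynomial prefactor cleanly: since $1 - b_n/\L n = w_n/\L n$ and $w_n \ge 1$ for all large $n$, we have $(1 - b_n/\L n)^{-(d-1)} = (\L n / w_n)^{d-1} \le (\L n)^{d-1}$.

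It then remains to absorb the constant $[2(d-1)]^{d-1}/(d-1)!$ and the $1+o(1)$ multiplying it into the inner exponent. The mechanism is the elementary observation that a bounded positive factor $C$ in front of a term of the form $\exp[-\exp\{(1+o(1))\half w_n\}]$ can be compensated by shrinking the constant in the inner exponent: writing that exponent as $(1+\eps_n)\half w_n$ with $\eps_n = o(1)$ and replacing it by $(1+\eps_n)\half w_n - \ln 2$, i.e.\ by $(1+\eps'_n)\half w_n$ with $\eps'_n := \eps_n - (2\ln 2)/w_n = o(1)$, multiplies the double exponential by exactly $\tfrac12$, which dominates $C$ once $w_n$, hence $\exp\{(1+\eps_n)\half w_n\}$, is large enough. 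Combining this with the previous step yields precisely the asserted bound. I do not expect any real obstacle here — the geometry and the extreme-value estimation are already in place — beyond the routine bookkeeping of the nested $o(1)$ terms; the one point worth a separate remark is the degenerate case $d = 1$, which is not covered by the preceding derivation (there $m$ would be $0$) but for which $F_n^- = F_n^+$ and the claimed bound follows immediately from $\P(F_n^+ \le x) = (1-e^{-x})^n$ together with $1 - u \le e^{-u}$.
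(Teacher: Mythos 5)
Your proposal is correct and takes essentially the same approach as the paper: the proposition is stated with no separate proof precisely because the preceding derivation (geometric lemma, homogeneity, finite subadditivity, binomial bound, and the single-orthant estimate) \emph{is} the proof, and the only remaining work is the bookkeeping you carry out — bounding $(1-b_n/\L n)^{-(d-1)} \le (\L n)^{d-1}$ using $\L n - b_n \to \infty$ and absorbing the bounded prefactor into the inner $(1+o(1))$ exponent. Your separate remark on the degenerate case $d=1$ (where the lattice-point reduction collapses but $F_n^- = F_n^+$ makes the bound immediate) is a small but legitimate point that the paper leaves implicit.
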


\subsection{Proof of \refT{T:-}}
\label{S:proof-}
In this subsection we prove \refT{T:-}, part by part in the order (a), (c1), (c2), (b).

\begin{proof}[Proof of \refT{T:-}{\rm (a)}]
The second assertion in \refT{T:-}(a) follows from the case $d = 1$ of \refT{T:+}(a) since, according to
\refL{L:-}(a), we have
\begin{equation}
\label{Bbound}
F_n^- \leq \min_{1 \leq j \leq d} B_n^+(j) \leq B_n^+(1),
\end{equation}
where we recall the definition
\[
B^+_n(j) := \max\{X^{(i)}_j: 1 \leq i \leq n\}.
\]
The first assertion follows from part~(c1), proved next.
\end{proof}

\begin{proof}[Proof of \refT{T:-}{\rm (c1)}]
As noted in \refL{L:-}, the process $F^-$ has nondecreasing sample paths.  From this it follows that if $(b_n)$ is (ultimately) monotone nondecreasing and $(n_j)$ is any strictly increasing sequence of positive integers, then
\[
\{F_n^- \leq b_n\mbox{\rm\ \io$(n)$}\} \subseteq \{F_{n_j}^- \leq b_{n_{j + 1}}\mbox{\rm\ \io$(j)$}\}.
\]
To complete the proof, we choose $b_n \equiv \L n - 3 \L_3 n$ and $n_j \equiv 2^j$, bound
$\P(F_{n_j}^- \leq b_{n_{j + 1}})$ using \refP{P:-lower}, and apply the first Borel--Cantelli lemma.

Here are the details.  Since $\L n_j = j \L 2$ and
\[
b_{n_{j + 1}} = (j + 1) \L 2 - 3 \L_2[(j + 1) \L 2] = j \L 2 - (1 + o(1)) 3 \L_2 j,
\]
the hypotheses of \refP{P:-lower} are met and
\begin{align*}
\P(F_{n_j}^- \leq b_{n_{j + 1}})
&\leq (j \L 2)^{d - 1} \exp\!\left[- \exp\left\{ (1 + o(1)) \tfrac{3}{2} \L_2 j \right\} \right] \\
&= \exp\!\left[- (\L j)^{(1 + o(1))(3 / 2)} \right],
\end{align*}
which is summable.
\end{proof}

\begin{remark}
\label{R:3}
We chose the constant~$3$ as the coefficient of $- \L_3 n$ in parts~(a) and~(c1) of \refT{T:-} for convenience.  As the proof shows, we could have used any constant larger than~$2$.
\end{remark}

\begin{proof}[Proof of \refT{T:-}{\rm (c2)}]
This follows immediately from the case $d = 1$ of \refT{T:+}(c) using the aforementioned bound~\eqref{Bbound}.
\end{proof}

There remains only the proof of \refT{T:-}(b).  For that we need first the following almost sure lower bound on $r_n$, which is of interest in its own right.

\begin{theorem}
\label{T:r}
Assume $d \geq 2$.  Let $r_n$ denote the number of remaining records at time~$n$.  Then
\[
\liminf \frac{r_n}{(\L n) / (d \L_2 n)} \geq 1\mbox{\rm \ \as}
\]
\end{theorem}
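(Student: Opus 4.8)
The plan is to exhibit, for each $n$ in a geometrically growing subsequence, about $(\L n)/(d\,\L_2 n)$ deterministic points whose associated open orthants each capture a distinct current record, and then to propagate this lower bound to all intermediate times. The key elementary fact is: \emph{if the open orthant $O^+_v := \{y\in\bbR^d : y\succ v\}$ (for $0\le v\in\bbR^d$) contains at least one of $X^{(1)},\dots,X^{(n)}$, then it contains a current record at time~$n$.} Indeed, the capturing observation $X^{(k)}$ satisfies $X^{(k)}\preceq Y$ for some current record $Y$ (take a maximizer of the coordinate sum among the observations $X^{(i)}$, $i\le n$, with $X^{(i)}\succeq X^{(k)}$), and then $Y\succeq X^{(k)}\succ v$ forces $Y\succ v$.

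Fix $\eps>0$. Put $\omega_n := 2\L_3 n$, $t_n := \L n - \omega_n$, $\Delta_n := \lceil (d+\eps)\L_2 n\rceil$, and $M_n := 1 + \lfloor t_n/\Delta_n\rfloor$, so that $M_n\sim (\L n)/[(d+\eps)\L_2 n]$. Using $d\ge 2$, for $1\le p\le M_n$ set
\[
v^{(p)}_n := \bigl( t_n - (p-1)\Delta_n,\ 0,\dots,0,\ (p-1)\Delta_n \bigr)\ \ge\ 0, \qquad (v^{(p)}_n)_+ = t_n .
\]
For $p\neq q$ the points $v^{(p)}_n, v^{(q)}_n$ are $\prec$-incomparable, and their coordinatewise maximum has coordinate sum $t_n + |p-q|\,\Delta_n \ge t_n + \Delta_n$. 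Along $n_j := 2^j$ we apply the first Borel--Cantelli lemma to the events ``$O^+_{v^{(p)}_{n_j}}$ misses all of $X^{(1)},\dots,X^{(n_j)}$ for some $p\le M_{n_j}$'': since $\P(X\in O^+_v) = e^{-v_+}$, this probability is at most
\[
M_{n_j}\,\bigl(1 - e^{-t_{n_j}}\bigr)^{n_j}\ \le\ M_{n_j}\,\exp\!\bigl(- n_j e^{-t_{n_j}}\bigr)\ =\ M_{n_j}\,\exp\!\bigl(-(\L_2 n_j)^2\bigr),
\]
which is summable in~$j$ exactly because the subsequence grows geometrically (here $\omega_n\to\infty$ but $\omega_n = o(\L_2 n)$ is used). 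Hence, a.s.\ eventually, every $O^+_{v^{(p)}_{n_j}}$ with $p\le M_{n_j}$ contains some $X^{(k)}$ with $k\le n_j$; by the elementary fact above, applied at any time $n\ge n_j$ (the witnessing observations being still present), each such orthant then contains a current record $Y^{(p)}_n$ at every time $n\ge n_j$.

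It remains to see these $M_{n_j}$ current records are a.s.\ eventually distinct for every $n\in[n_j,n_{j+1})$. If instead $Y^{(p)}_n = Y^{(q)}_n =: Y$ for some $p\neq q$, then $Y\succ v^{(p)}_{n_j}\vee v^{(q)}_{n_j}$, so by \refL{L:+} and the nondecreasingness of $F^+$,
\[
F^+_{n_{j+1}}\ \ge\ F^+_n\ \ge\ Y_+\ >\ t_{n_j}+\Delta_{n_j}\ \ge\ \L n_j + (d+\eps)\L_2 n_j - 2\L_3 n_j;
\]
but \refT{T:+}(b) (the case $c = d + \tfrac{\eps}{2} > d$) gives $F^+_m < \L m + (d+\tfrac{\eps}{2})\L_2 m$ a.s.\ eventually in~$m$, and since $n_{j+1} = 2n_j$ this upper bound for $m=n_{j+1}$ is eventually smaller than the displayed lower bound --- a contradiction. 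Therefore a.s.\ eventually $r_n \ge M_{n_j}$ for every $n\in[n_j,n_{j+1})$, and since $M_{n_j}\sim (\L n_j)/[(d+\eps)\L_2 n_j]\sim (\L n)/[(d+\eps)\L_2 n]$ uniformly on that block, we get $\liminf r_n\big/\bigl[(\L n)/(d\,\L_2 n)\bigr]\ge d/(d+\eps)$. Letting $\eps\downarrow 0$ proves the theorem.

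The delicate point is the non-monotonicity of the process $r$: a plain Borel--Cantelli over all~$n$ would force the ``wasted'' quantity $\omega_n$ to be of order $\L_2 n$, degrading the constant to $d+1$ (or worse); the geometric subsequence together with the persistence of orthant witnesses fixed at time $n_j$ (and monotonicity of $F^+$) is exactly what lets $\omega_n = o(\L_2 n)$ and hence recovers the sharp constant~$d$. (When $d\ge 3$ one can bypass the geometry altogether: since $r_n \eqd R_n(d-1)$ for each fixed~$n$, a Chernoff bound yields $\P(r_n \le \delta_n\E r_n) \le \exp(-(1-o(1))\E r_n)$ for any $\delta_n\downarrow 0$, and $\E r_n\sim (\L n)^{d-1}/(d-1)!$ makes this summable over all~$n$; but for $d=2$ the subsequence argument appears to be needed.)
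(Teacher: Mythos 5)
Your proof is correct, and it takes a genuinely different route from the paper's. The paper orders the current records by first coordinate as $Z^{(1)},\dots,Z^{(r_n)}$, observes (via a short geometric argument about $\mathrm{RS}_n$) that consecutive gaps $Z^{(i)}_1 - Z^{(i-1)}_1$ are each at most $W_n$, and then telescopes: $B^+_n(1) = \sum_i (Z^{(i)}_1 - Z^{(i-1)}_1) \leq r_n W_n$. Combining the a.s.\ lower bound $B^+_n(1) \geq \L n - \epsilon\L_2 n$ (from \refC{C:+}(b) with $d=1$) with the a.s.\ upper bound $W_n \leq (1+\epsilon)d\,\L_2 n$ (from \refC{C:+}(b) and \refT{T:-}(c1)) gives the result in two lines. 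Your argument instead \emph{constructs} the witnesses: you lay down $\approx (\L n)/[(d+\epsilon)\L_2 n]$ deterministic test orthants with apexes on the hyperplane $x_+ = \L n - 2\L_3 n$, use a first Borel--Cantelli along $n_j = 2^j$ to ensure each is hit by an observation, invoke the elementary lemma that an orthant containing an observation contains a current record at every later time, and then use \refT{T:+}(b) to force the captured records to be distinct (since a coincidence would push $F^+$ above its outer boundary). What your approach buys is independence from the $F^-$ machinery: you do not need the geometric lemma (\refL{L:geo}), \refP{P:-lower}, or \refT{T:-}(c1), only \refT{T:+}(b). What the paper's approach buys is brevity: the telescoping identity does the counting for you once the width bound is in hand. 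Both hinge on the same sharp threshold $c=d$ in \refT{T:+}(b) and on monotonicity of $F^+$; your closing remark correctly identifies why the geometric subsequence is what keeps the wasted term $\omega_n$ at $o(\L_2 n)$ rather than $\Theta(\L_2 n)$ and thus preserves the constant $d$ (rather than degrading it to $d+1$), and your observation that for $d\geq 3$ a plain Chernoff bound on the Poisson--binomial representation of $r_n$ suffices over all~$n$ is also correct.
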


\begin{proof}
Fix $\epsilon > 0$.
From \refC{C:+}(b) with $d = 1$ it follows that almost surely
\[
\liminf \frac{B^+_n(1) - \L n}{\L_2 n} = 0
\]
and hence $B^+_n(1) \geq \L n - \epsilon \L_2 n$ a.a.\ \
Additionally, from the now-established \refC{C:+}(b) and \refT{T:-}(c1), it follows that almost surely
\[
\limsup \frac{W_n}{\L_2 n} \leq d
\]
and hence $W_n / \L_2 n \leq (1 + \epsilon) d$ a.a.\ \

Label the remaining records in (\as\ strictly) increasing order of first coordinate as $Z^{(1)}, \dots, Z^{(r_n)}$, and define $Z^{(0)} := Y^{(2)}$ as defined in the proof of \refL{L:-}(a).  Note in particular that the points
$Z^{(i)}$ with $0 \leq i \leq r_n$ all belong to $F_n$, that $Z^{(0)}_1 = Y^{(2)}_1 = 0$, and that
$Z^{(r_n)}_1 = B^+_n(1)$.  Therefore,
\begin{align*}
\L n - \epsilon \L_2 n
&\leq B^+_n(1)
= Z^{(r_n)}_1 - Z^{(0)}_1
= \sum_{i = 1}^{r_n} \left( Z^{(i)}_1 - Z^{(i - 1)}_1 \right) \\
&\leq r_n W_n
\leq (1 + \epsilon)\,d\,r_n \L_2 n
\end{align*}
for all large~$n$, almost surely.  The desired result follows.
\end{proof}

\begin{proof}[Proof of \refT{T:-}{\rm (b)}]
In light of \refT{T:r} and \refL{L:-}(b), it is sufficient that for each fixed positive integer~$m$ we have
\begin{equation}
\label{Bh}
\P\left( \Bh_{m, n} \geq \L n + \frac{a}{m} \L_2 n\mbox{\rm \ \io} \right) = 0
\end{equation}
if $a > 1$.  But~\eqref{Bh} is known from \cite[Thm.~1, see esp.~(3.1)]{Kiefer(1972)}.
\end{proof}

\section{Record counts}
\label{S:counts}

Knowledge about the record counts $R_n$, $r_n$, and $\beta_n$ discussed in \refS{S:intro} is interesting in its own right, and knowledge about $R_n$ will be needed in the next section.

\subsection{Typical behavior}
\label{S:typical}
In this subsection we review a known central limit theorem (CLT) of Berry--Esseen type for $r_n$ and use it to derive easily CLTs for $R_n$ and $\beta_n$.  Here are the results.  Complicated but explicit forms are known for the constants $\gamma_{d, j}$ appearing in the variance expressions.

\begin{theorem}[Bai et al.~\cite{Bai(2005), Bai(1998)}]
\label{T:typical counts}
Let~$\Phi$ denote the standard normal distribution function.
\smallskip

{\rm (a)} Let $d \geq 2$.  Then there exist constants $\gamma_{d, j}$ with $\gamma_{d, 0} \geq 1 / (d - 1)! > 0$ such that the number $r_n$ of remaining records at time~$n$ satisfies
\begin{align*}
\E r_n &= (\L n)^{d - 1} \sum_{j = 0}^{d - 1} \frac{(-1)^j \Gamma^{(j)}(1)}{j! (d - 1 - j)!} (\L n)^{-j}
+ O(n^{-1} (\L n)^{d - 1}) \sim \frac{(\L n)^{d -1}}{(d -1)!}, \\
\Var r_n &= (\L n)^{d - 1} \sum_{j = 0}^{d - 1} \gamma_{d, j} (\L n)^{-j}
+ O(n^{-1} (\L n)^{2 d - 2}) \sim \gamma_{d, 0} (\L n)^{d - 1},
\end{align*}
and
\[
\sup_x \left| \P\left( \frac{r_n - \E r_n}{\sqrt{\Var r_n}} < x \right) - \Phi(x) \right|
= O((\L n)^{- (d - 1) / 4} (\L_2 n)^d).
\]
\smallskip

{\rm (b)} Let $d \geq 1$.  Then the number $R_n$ of records set through time~$n$ satisfies
\begin{align*}
\E R_n &= (\L n)^d \sum_{j = 0}^d \frac{(-1)^j \Gamma^{(j)}(1)}{j! (d - j)!} (\L n)^{-j}
+ O(n^{-1} (\L n)^d) \sim \frac{(\L n)^d}{d!}, \\
\Var R_n &= (\L n)^d \sum_{j = 0}^d \gamma_{d + 1, j} (\L n)^{-j}
+ O(n^{-1} (\L n)^{2 d}) \sim \gamma_{d + 1, 0} (\L n)^d,
\end{align*}
and
\[
\sup_x \left| \P\left( \frac{R_n - \E R_n}{\sqrt{\Var R_n}} < x \right) - \Phi(x) \right|
= O((\L n)^{-  d / 4} (\L_2 n)^{d + 1}).
\]
\smallskip

{\rm (c)} Let $d \geq 1$.  Then the number $\beta_n = R_n - r_n$ of broken records at time~$n$ satisfies
\begin{align*}
\E \beta_n
&= (\L n)^d \left[ \frac{1}{d!}
+ \sum_{j = 1}^d \frac{(-1)^j [\Gamma^{(j)}(1) + j \Gamma^{(j - 1)}(1)]}{j! (d - j)!} (\L n)^{-j} \right] \\
&{} \qquad + O(n^{-1} (\L n)^d) \sim \frac{(\L n)^d}{d!}, \\
\Var \beta_n &= \gamma_{d + 1, 0} (\L n)^d [1 + O((\L n)^{-1/2})],
\end{align*}
and the central limit theorem
\[
\frac{\beta_n - \E \beta_n}{\sqrt{\Var \beta_n}}\mbox{\rm \ converges in law to standard normal}.
\]
\end{theorem}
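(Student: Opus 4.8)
The plan is to take part~(a) as given — it is precisely the Berry--Esseen central limit theorem for $r_n$ proved by Bai et al.~\cite{Bai(2005), Bai(1998)} — and to obtain parts~(b) and~(c) from it by elementary bookkeeping, exploiting the equality in distribution between record counts in consecutive dimensions.

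For part~(b) I would invoke the concomitants identity recorded in the discussion following \refD{D:record}: for each fixed~$n$, the number $R_n$ of records in dimension~$d$ has the same (univariate) distribution as the number $r_n$ of remaining records in dimension~$d+1$. Since the mean expansion, the variance expansion, and the Berry--Esseen bound in part~(a) are all statements about the single law of $r_n$ at the fixed time~$n$, applying part~(a) in dimension~$d+1$ — legitimate because $d \ge 1$ forces $d+1 \ge 2$ — reproduces each of the three displayed assertions of~(b): the exponent $(d+1)-1$ becomes~$d$ and the index range $0 \le j \le (d+1)-1$ becomes $0 \le j \le d$ in the mean sum; the constants $\gamma_{d,j}$ become $\gamma_{d+1,j}$ in the variance sum; and the rate $O((\L n)^{-((d+1)-1)/4}(\L_2 n)^{d+1}) = O((\L n)^{-d/4}(\L_2 n)^{d+1})$ is exactly what is claimed. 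I would include just enough of this correspondence to make it unambiguous.

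For part~(c) I would start from $\beta_n = R_n - r_n$. Taking expectations, substituting the expansion for $\E R_n$ from~(b) and for $\E r_n$ from~(a), rewriting $(\L n)^{d-1}(\L n)^{-j} = (\L n)^d (\L n)^{-(j+1)}$ so that both sums run over powers $(\L n)^{-k}$ sharing the factor $(\L n)^d$, and collecting terms produces the stated formula for $\E\beta_n$; the leading term $(\L n)^d/d!$ is the $j=0$ contribution of $\E R_n$ (using $\Gamma(1)=1$), and the generic coefficient $(-1)^j[\Gamma^{(j)}(1)+j\Gamma^{(j-1)}(1)]/(j!\,(d-j)!)$ comes from pairing the $j$th term of $\E R_n$ with the $(j-1)$th term of $\E r_n$ via $1/((j-1)!\,(d-j)!) = j/(j!\,(d-j)!)$. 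For the variance I would write $\Var\beta_n = \Var R_n - 2\Cov(R_n,r_n) + \Var r_n$; part~(a) gives $\Var r_n = O((\L n)^{d-1})$, Cauchy--Schwarz gives $|\Cov(R_n,r_n)| \le (\Var R_n\,\Var r_n)^{1/2} = O((\L n)^{d-1/2})$, and hence $\Var\beta_n = \gamma_{d+1,0}(\L n)^d[1 + O((\L n)^{-1/2})]$. For the limit law I would use the decomposition
\[
\frac{\beta_n - \E\beta_n}{\sqrt{\Var\beta_n}}
= \sqrt{\frac{\Var R_n}{\Var\beta_n}}\cdot\frac{R_n - \E R_n}{\sqrt{\Var R_n}} - \frac{r_n - \E r_n}{\sqrt{\Var\beta_n}};
\]
here $\sqrt{\Var R_n/\Var\beta_n} \to 1$ and the second factor of the first term is asymptotically standard normal by~(b), while the last term has variance $\Var r_n/\Var\beta_n = O((\L n)^{-1}) \to 0$ and so tends to~$0$ in probability, so Slutsky's theorem yields the assertion.

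I do not expect a genuine obstacle: parts~(b) and~(c) are routine consequences of~(a). The only point needing a moment's care is that the Berry--Esseen \emph{rate} transfers in part~(b); this is immediate once one notes that the rate is a property of the law of $r_n$ at the fixed time~$n$, and that the concomitants statement is an equality of those laws (not merely of selected functionals), so that dimension $d+1$ enters the rate directly through part~(a).
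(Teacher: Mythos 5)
Your proposal follows essentially the same route as the paper: part~(a) is cited from Bai et al., part~(b) follows from part~(a) applied in dimension $d+1$ via the concomitants identity, and part~(c) is obtained by subtraction from~(a) and~(b), with Slutsky's theorem giving the CLT. The paper phrases the variance step of part~(c) slightly differently --- it uses the $L^2$ triangle inequality after centering by means, which places $\sqrt{\Var \beta_n}$ within $\pm\sqrt{\Var r_n}$ of $\sqrt{\Var R_n}$ --- rather than expanding $\Var \beta_n = \Var R_n - 2\Cov(R_n,r_n) + \Var r_n$ and bounding the cross term by Cauchy--Schwarz as you do, but the two give the same $O((\L n)^{-1/2})$ relative error and are interchangeable.

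One small gap: part~(c) is asserted for all $d \ge 1$, but your derivation invokes part~(a), which holds only for $d \ge 2$, so as written your argument does not cover $d = 1$. The paper treats that case separately by noting that $r_n = 1$ for all $n \ge 1$ when $d = 1$, so that $\beta_n = R_n - 1$ and part~(c) reduces immediately to part~(b). You should add this one observation to make the proof complete over the stated range of~$d$.
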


\begin{proof}
Part~(a) is known from \cite{Bai(2005)}:\ their eq.~(8) for $\E r_n$, their Theorem~1 for $\Var r_n$, their eq.~(13)---and the main theorem of~\cite{Bai(1998)}---for the stated lower bound on
$\gamma_{d, 0}$, and their Theorem~2 for the CLT.
\smallskip

Part~(b) follows immediately from part~(a) by use of concomitants.  (Recall the discussion concerning concomitants preceding \refD{D:RS}.)
\smallskip

For $d = 1$, part~(c) follows from part~(b) because $r_n = 1$ for $n \geq 1$.  For $d \geq 2$, part~(c) follows from parts~(a) and~(b); for $\Var \beta_n$ we use the triangle inequality for $L^2$-norm after centering by means, and for the CLT we use the CLT of part~(b) together with Slutsky's theorem.
\end{proof}

We have not attempted to find further terms in the asymptotic expansion for $\Var \beta_n$ nor a
Berry--Esseen theorem for $\beta_n$.

\subsection{Almost sure behavior}
\label{S:as}

We next establish a sufficient condition for a top boundary for the absolute centered process
$(|R_n - \E R_n|)$ to be of outer class, and derive from that condition strong-law concentration for~$R$ about its mean function.  We also establish analogous results for the processes~$\beta$ and~$r$.

\begin{theorem}
\label{T:as}
Let $d \geq 1$.

{\rm (a)}~If $\epsilon > 0$, then
\[
\P\left( |R_n - \E R_n| \geq (\L n)^{\frac{3d}{4} + \epsilon}\mbox{\rm \ \io} \right) = 0.
\]
As a consequence,
\[
\frac{R_n}{\E R_n} \asto 1.
\]

{\rm (b)}~If $\epsilon > 0$, then
\[
\P\left( |\beta_n - \E \beta_n| \geq (\L n)^{\frac{3d}{4} + \epsilon}\mbox{\rm \ \io} \right) = 0.
\]
As a consequence,
\[
\frac{\beta_n}{\E \beta_n} \asto 1.
\]

{\rm (c)}~If $\epsilon > 0$, then
\[
\P\left( |r_n - \E r_n| \geq (\L n)^{\frac{3d}{4} + \epsilon}\mbox{\rm \ \io} \right) = 0.
\]
As a consequence, if $d \geq 5$ then
\[
\frac{r_n}{\E r_n} \asto 1.
\]
\end{theorem}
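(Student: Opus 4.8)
The plan is to establish the three ``infinitely often'' statements first; the strong laws at the ends of (a), (b), (c) then follow immediately by dividing through by the mean functions, using $\E R_n\sim(\L n)^d/d!$, $\E\beta_n\sim(\L n)^d/d!$, and $\E r_n\sim(\L n)^{d-1}/(d-1)!$ from \refT{T:typical counts}. Indeed, once we know $|R_n-\E R_n|<(\L n)^{3d/4+\epsilon}$ for all large~$n$ almost surely, we get $R_n/\E R_n\asto1$ as soon as $3d/4+\epsilon<d$, which holds for some $\epsilon>0$ whenever $d\ge1$; the same goes for $\beta_n$; but for $r_n$ we instead need $3d/4+\epsilon<d-1$, which forces $d\ge5$ --- precisely the stated restriction. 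Since the event $|R_n-\E R_n|\ge(\L n)^{3d/4+\epsilon}$ shrinks as $\epsilon$ grows, it suffices throughout to treat $\epsilon$ arbitrarily small, say $0<\epsilon<d/4$.

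For the ``infinitely often'' part of~(a) I would use that $R_n$, and hence $\E R_n$, is nondecreasing (noted just after \refD{D:record}). Fix a strictly increasing subsequence $n_j:=\lfloor\exp(j^a)\rfloor$, so that $\L n_j\sim j^a$, with the exponent $a$ to be chosen below. For $n_j\le n<n_{j+1}$ one sandwiches $R_{n_j}-\E R_{n_{j+1}}\le R_n-\E R_n\le R_{n_{j+1}}-\E R_{n_j}$ and uses $(\L n)^{3d/4+\epsilon}\ge(\L n_j)^{3d/4+\epsilon}$; provided the mean increments satisfy $\E R_{n_{j+1}}-\E R_{n_j}\le\tfrac12(\L n_j)^{3d/4+\epsilon}$ for all large~$j$, it follows that the event ``$|R_n-\E R_n|\ge(\L n)^{3d/4+\epsilon}$ for infinitely many $n$'' is contained in the union of the events ``$|R_{n_{j+1}}-\E R_{n_{j+1}}|\ge\tfrac14(\L n_{j+1})^{3d/4+\epsilon}$ for infinitely many $j$'' and ``$|R_{n_j}-\E R_{n_j}|\ge\tfrac12(\L n_j)^{3d/4+\epsilon}$ for infinitely many $j$''. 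By \refT{T:typical counts}(b), $\E R_n$ is a degree-$d$ polynomial in $\L n$ up to a negligible $O(n^{-1}(\L n)^d)$ term, so by the mean value theorem $\E R_{n_{j+1}}-\E R_{n_j}=O\bigl((\L n_{j+1})^{d-1}(\L n_{j+1}-\L n_j)\bigr)=O(j^{ad-1})$, which is $o\bigl((\L n_j)^{3d/4+\epsilon}\bigr)$ precisely when $a\bigl(\tfrac d4-\epsilon\bigr)<1$. On the other hand, $\Var R_n\sim\gamma_{d+1,0}(\L n)^d$ and Chebyshev's inequality give $\P\bigl(|R_{n_j}-\E R_{n_j}|\ge c(\L n_j)^{3d/4+\epsilon}\bigr)=O\bigl((\L n_j)^{-d/2-2\epsilon}\bigr)=O(j^{-a(d/2+2\epsilon)})$, which is summable in~$j$ exactly when $a\bigl(\tfrac d2+2\epsilon\bigr)>1$. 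The interval $\tfrac{2}{d+4\epsilon}<a<\tfrac{4}{d-4\epsilon}$ is nonempty (since $2(d-4\epsilon)<4(d+4\epsilon)$ always holds), so one can pick such an~$a$, and the first Borel--Cantelli lemma kills both of the right-hand events. Part~(b) is the identical argument, using that $\beta_n$ (hence $\E\beta_n$) is nondecreasing, that $\E\beta_n$ is a degree-$d$ polynomial in $\L n$, and that $\Var\beta_n\sim\gamma_{d+1,0}(\L n)^d$; note that only the variance asymptotics of \refT{T:typical counts}(c) are used, not any Berry--Esseen bound for~$\beta$.

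The one genuinely different step is~(c), and the obstacle is that $r_n$ is \emph{not} monotone, so the sandwich argument is unavailable. I would sidestep this by writing $r_n-\E r_n=(R_n-\E R_n)-(\beta_n-\E\beta_n)$ and invoking the already-established parts~(a) and~(b) with $\epsilon/2$ in place of~$\epsilon$: almost surely $|R_n-\E R_n|<(\L n)^{3d/4+\epsilon/2}$ and $|\beta_n-\E\beta_n|<(\L n)^{3d/4+\epsilon/2}$ for all large~$n$, whence $|r_n-\E r_n|<2(\L n)^{3d/4+\epsilon/2}\le(\L n)^{3d/4+\epsilon}$ eventually (because $2\le(\L n)^{\epsilon/2}$ for large~$n$), which is the ``infinitely often'' claim of~(c); the strong law for $r_n$ then follows for $d\ge5$ exactly as in the first paragraph. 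I expect the only delicate bookkeeping to be the verification that the admissible interval for the exponent~$a$ is nonempty for the relevant small~$\epsilon$ and every $d\ge1$; everything else is routine Chebyshev-plus-Borel--Cantelli.
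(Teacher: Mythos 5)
Your proposal is correct and follows essentially the same route as the paper: exploit monotonicity of $R$ and $\beta$, pass to a geometric-in-$\L n$ subsequence $n_j=\lfloor\exp(j^a)\rfloor$ (the paper fixes $a=2/d$, which lies in your admissible interval), apply Chebyshev plus the first Borel--Cantelli lemma, and then obtain~(c) by the subtraction $r_n-\E r_n=(R_n-\E R_n)-(\beta_n-\E\beta_n)$. The one small simplification in your version is the symmetric sandwich $R_{n_j}-\E R_{n_{j+1}}\le R_n-\E R_n\le R_{n_{j+1}}-\E R_{n_j}$, which absorbs the mean increment directly and so avoids the paper's separate verification (via~\eqref{sets} and its $\beta$-analogue) that the lower boundary sequence $\E R_n-(\L n)^{3d/4+\epsilon}$ is ultimately nondecreasing.
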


\begin{proof}
(a)~Since $\E R_n \sim (\L n)^d / d!$ by \refT{T:typical counts}(b), the second assertion is indeed an immediate consequence of the first.  To prove the first assertion, we establish
\begin{equation}
\label{right}
\P\left( R_n \geq \E R_n + (\L n)^{\frac{3d}{4} + \epsilon}\mbox{\ \io} \right) = 0
\end{equation}
and
\begin{equation}
\label{left}
\P\left( R_n \leq \E R_n - (\L n)^{\frac{3d}{4} + \epsilon}\mbox{\ \io} \right) = 0.
\end{equation}

To prove~\eqref{right} we exploit the nondecreasingness of the sample paths of the process~$R$.  If $(b_n)$ is ultimately monotone nondecreasing and $(n_j)$ is any strictly increasing sequence of positive integers, then
\begin{equation}
\label{right subsequence}
\{R_n \geq b_n\mbox{\rm\ \io$(n)$}\} \subseteq \{R_{n_{j + 1}} \geq b_{n_j}\mbox{\rm\ \io$(j)$}\}.
\end{equation}
Now choose $b_n \equiv \E R_n + (\L n)^{\frac{3d}{4} + \epsilon}$ (which is clearly nondecreasing) and
$n_j \equiv \lfloor e^{j^{2 / d}} \rfloor$.  Observe for large~$j$ that $\L n_j = j^{2 / d} + O(e^{- j^{2 / d}})$, and hence from \refT{T:typical counts}(b) that
\begin{align*}
\E R_{n_j}
&= (\L n_j)^d \sum_{k = 0}^d \frac{(-1)^k \Gamma^{(k)}(1)}{k! (d - k)!} (\L n_j)^{-k} + o(1) \\
&= j^2 \sum_{k = 0}^d \frac{(-1)^k \Gamma^{(k)}(1)}{k! (d - k)!} j^{-2k/d} + o(1) \sim \frac{j^2}{d!}, \\
\E R_{n_{j + 1}}
&= (j + 1)^2 \sum_{k = 0}^d \frac{(-1)^k \Gamma^{(k)}(1)}{k! (d - k)!} (j + 1)^{-2k/d} + o(1) \\
&= [1 + O(j^{-1})] j^2 \sum_{k = 0}^d \frac{(-1)^k \Gamma^{(k)}(1)}{k! (d - k)!} j^{-2k/d} + o(1) \\
&= \E R_{n_j} + O(j^{-1} \E R_{n_j}) + o(1) = \E R_{n_j} + O(j).
\end{align*}
Observe also that
\[
b_{n_j} - \E R_{n_j} = (\L n_j)^{\frac{3d}{4} + \epsilon} \sim j^{\frac{3}{2} + \frac{2}{d} \epsilon};
\]
As a consequence of these two observations,
\[
b_{n_j} - \E R_{n_{j + 1}} = (b_{n_j} - \E R_{n_j}) - (\E R_{n_{j + 1}} - \E R_{n_j})
\sim j^{\frac{3}{2} + \frac{2}{d} \epsilon} > 0.
\]
Further, from \refT{T:typical counts}(b) we have
\[
\Var R_{n_{j + 1}} \sim \gamma_{d + 1, 0} (\L n_{j + 1})^d = \Theta(j^2).
\]
Hence, by Chebyshev's inequality,
\[
P(R_{n_{j + 1}} \geq b_{n_j}) \leq (b_{n_j} - \E R_{n_{j + 1}})^{-2} \Var R_{n_{j + 1}}
= \Theta(j^{- (1 + \frac{4}{d} \epsilon)}),
\]
which is summable.  The first Borel--Cantelli lemma now implies that
\[
\P(R_{n_{j + 1}} \geq b_{n_j}\mbox{\ \io$(j)$}) = 0,
\]
and then~\eqref{right subsequence} yields the desired~\eqref{right}.

The proof of~\eqref{left} is similar and again uses the nondecreasingness of the sample paths of~$R$.  If
$(b_n)$ is ultimately monotone nondecreasing and $(n_j)$ is any strictly increasing sequence of positive integers, then
\begin{equation}
\label{left subsequence}
\{R_n \leq b_n\mbox{\rm\ \io$(n)$}\} \subseteq \{R_{n_j} \leq b_{n_{j + 1}}\mbox{\rm\ \io$(j)$}\}.
\end{equation}
Now choose $b_n \equiv \E R_n - (\L n)^{\frac{3d}{4} + \epsilon}$ and, again,
$n_j \equiv \lfloor e^{j^{2 / d}} \rfloor$.  The sequence $(b_n)$ is ultimately monotone nondecreasing because it is known (\eg,\ \cite{Bai(2005)}) that
\begin{equation}
\label{sets}
\E R_n - \E R_{n - 1} = \P(X^{(n)}\mbox{\ sets a record}) = n^{-1} \E r_n \sim n^{-1} \frac{(\L n)^{d - 1}}{(d - 1)!},
\end{equation}
while also
\begin{align*}
(\L n)^{\frac{3d}{4} + \epsilon} - [\L (n - 1)]^{\frac{3d}{4} + \epsilon}
&\sim ({\tfrac{3d}{4} + \epsilon}) n^{-1} (\L n)^{{\frac{3d}{4} - 1 + \epsilon}} \\
&= o(n^{-1} (\L n)^{d - 1}),
\end{align*}
provided $\epsilon < d / 4$ (which we may assume without loss of generality), whence
\[
b_n - b_{n - 1} \sim n^{-1} \frac{(\L n)^{d - 1}}{(d - 1)!} > 0.
\]
Proceeding as for~\eqref{right}, by Chebyshev's inequality we have
\[
P(R_{n_j} \leq b_{n_{j + 1}}) \leq (\E R_{n_j} - b_{n_{j + 1}})^{-2} \Var R_{n_j}
= \Theta(j^{- (1 + \frac{4}{d} \epsilon)}),
\]
which is summable.  The first Borel--Cantelli lemma now implies that
\[
\P(R_{n_j} \leq b_{n_{j + 1}}\mbox{\ \io$(j)$}) = 0,
\]
and then~\eqref{left subsequence} yields the desired~\eqref{left}.

(b)~For $d = 1$, part~(b) follows from part~(a) because $r_n = 1$ for $n \geq 1$, so we assume $d \geq 2$.  The sample paths of~$\beta$, like those of~$R$, are nondecreasing.  Thus, in precisely the same fashion that part~(a) is proved using the mean and variance results from \refT{T:typical counts}(b), so one can prove
part~(b) using the mean and variance results from \refT{T:typical counts}(c).  A key technical detail in establishing the analogue of~\eqref{left} for the process~$\beta$ is this analogue of~\eqref{sets} [which follows immediately from~\eqref{sets} by use of concomitants]:
\begin{align*}
\E \beta_n - \E \beta_{n - 1} &= (\E R_n - \E R_{n - 1}) - (\E r_n - \E r_{n - 1}) \\
&= (1 + o(1)) n^{-1} \frac{(\L n)^{d - 1}}{(d - 1)!} - (1 + o(1)) n^{-1} \frac{(\L n)^{d - 2}}{(d - 2)!} \\
&\sim n^{-1} \frac{(\L n)^{d - 1}}{(d - 1)!}.
\end{align*}

(c)~We obtain part(c) by subtraction from parts (a)--(b):
\begin{align*}
\lefteqn{\P\left( |r_n - \E r_n| \geq (\L n)^{\frac{3 d}{4} + \epsilon}\mbox{\ \io} \right)} \\
&= \P\left( |(R_n - \E R_n) - (\beta_n - \E \beta_n)| \geq (\L n)^{\frac{3 d}{4} + \epsilon}\mbox{\ \io} \right) \\
&\leq \P\left( |R_n - \E R_n| \geq \half (\L n)^{\frac{3 d}{4} + \epsilon}\mbox{\ \io} \right)
 + \P\left( |\beta_n - \E \beta_n| \geq \half (\L n)^{\frac{3 d}{4} + \epsilon}\mbox{\ \io} \right) \\
 &\leq \P\left( |R_n - \E R_n| \geq (\L n)^{\frac{3 d}{4} + \frac{\epsilon}{2}}\mbox{\ \io} \right)
 + \P\left( |\beta_n - \E \beta_n| \geq (\L n)^{\frac{3 d}{4} + \frac{\epsilon}{2}}\mbox{\ \io} \right) = 0.
\end{align*}
This gives the first assertion.  Since $\E r_n \sim (\L n)^{d - 1} / (d - 1)!$ by \refT{T:typical counts}(a), the second assertion is indeed an immediate consequence of the first provided $3 d / 4 < d - 1$, \ie,\ $d \geq 5$.
\end{proof}

\begin{remark}
\label{r as}
(a)~In the proof of \refT{T:as}(a) we utilized Chebyshev's inequality.  Use of normal tail proabilities would give a sharper result, except that the error estimate in the Berry--Esseen theorem of \refT{T:typical counts}(b) is insufficiently sharp for that.

(b)~For
$d = 2, 3, 4$ we conjecture on the basis of simulations discussed in \refE{E:r as records-time} that the second conclusion
\[
r_n / \E r_n \asto 1,
\]
\ie,
\begin{equation}
\label{r}
r_n / (\L n)^{d - 1} \asto 1 / (d - 1)!,
\end{equation}
of \refT{T:as}(c) remains true.
We
do at least know from the first assertion in \refT{T:as}(c) that for any $\epsilon > 0$ we have
\begin{equation}
\label{remaining little}
r_n = O((\L n)^{\frac{3 d}{4} + \epsilon})\mbox{\ \as}
\end{equation}
In dimension $d = 2$
we can come close to~\eqref{r}, or at least to showing that $r_n = \Theta(\L n)$ \as\ \ Indeed, we can combine the representation of the distribution of $r_n$ as a Poisson-binomial sum with a Chernoff bound and the first Borel--Cantelli lemma to show that $r_n = O(\L n)$ \as,\ and \refT{T:r} gives $r_n = \Omega((\L n) / (\L_2 n))$ \as
\ignore{
{\bf Here, for the record, is the outline of a straightforward proof of $r_n = O(\L n)\mbox{\ \as}$; I think it should be suppressed, rather than fleshed out, in our final draft.}  We show that, almost surely, $r_n \leq 3 \L n$ for all large~$n$ (and note in passing that the constant~$3$ here can be replaced by any constant larger than~$e$).

The number $r_n$ of current records has the same distribution as the total number $R_n$ of records set through time~$n$ in dimension 1.  As we know, $R_n$ is a Poisson-binomial sum.  It is standard and simple (and reproduced in Lemmas 2.6--2.7 of Jason Matterer's dissertation, sent from JAF to DQN by email on 2018.11.02; see the last sentence in the proof of his Lemma 2.7 and use $k = \lceil c H_n \rceil$ with $c > e$) to use Chernoff to bound the right tail of $R_n$.  Combined with the first Borel--Cantelli lemma, that gives the claimed result.
}
\end{remark}

\section{Time change}
\label{S:time}

It is natural to wonder about the appearance of the record-setting frontier (even in dimension~$2$) when
many observations, or (equivalently) many records, have been generated.  \refF{fig:simulation} displays the record-setting frontier for one trial after 10,000 bivariate records had been generated, at which point results such as those in \refS{S:intro} suggest themselves.  According to \refT{T:typical counts}(b) [or \refP{P:time}(a2)], had this been done naively, by generating observations $X^{(i)}$ and waiting for new records to be set, it would have taken roughly $10^{61}$ observations to obtain 10,000 records.  Instead, only the records were generated, using the importance-sampling scheme described and analyzed in~\cite{Fillgenerating(2018)}.

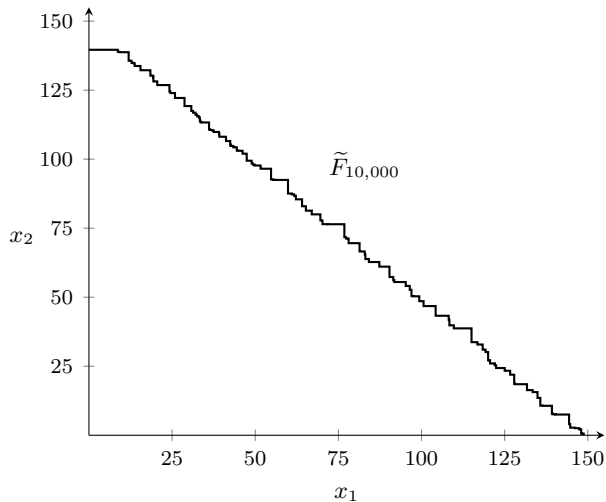
\begin{figure}[htb]
\begin{tikzpicture}[scale=1.]
  \begin{axis}[
    xmin=0, xmax=155,
    ymin=0, ymax=155,
    x label style={at={(axis description cs:0.5,-0.1)},anchor=north},
    y label style={at={(axis description cs:-.13,.5)},rotate=270,anchor=north},
    xtick = {25,50,75,100,125,150},
    ytick = {25,50,75,100,125,150},
    xticklabels={25,50,75, 100,125, 150},
    yticklabels = {25, 50,75,100,125, 150},
    ticklabel style = {font=\scriptsize},
    xlabel={\footnotesize $x_1$},
    ylabel={\footnotesize $x_2$},
    axis x line=bottom,
    axis y line=left
    ]
\addplot[mark=none,color=black,thick] coordinates {
(0, 139.619 )( 8.6835
, 139.619 )( 8.6835 , 138.953 ) ( 8.8741 , 138.953 )( 8.8741 ,
138.832 ) ( 9.16347 , 138.832 )( 9.16347 , 138.747 ) ( 11.9457 ,
138.747 )( 11.9457 , 135.637 ) ( 12.8197 , 135.637 )( 12.8197 ,
134.869 ) ( 13.5316 , 134.869 )( 13.5316 , 134.688 ) ( 13.7647 ,
134.688 )( 13.7647 , 133.746 ) ( 15.5036 , 133.746 )( 15.5036 ,
132.208 ) ( 18.4735 , 132.208 )( 18.4735 , 130.24 ) ( 19.344 ,
130.24 )( 19.344 , 129.637 ) ( 19.3749 , 129.637 )( 19.3749 ,
128.164 ) ( 20.5514 , 128.164 )( 20.5514 , 126.834 ) ( 24.2125 ,
126.834 )( 24.2125 , 124.481 ) ( 24.4874 , 124.481 )( 24.4874 ,
123.951 ) ( 25.8762 , 123.951 )( 25.8762 , 122.163 ) ( 28.7392 ,
122.163 )( 28.7392 , 119.236 ) ( 30.7809 , 119.236 )( 30.7809 ,
117.468 ) ( 31.3101 , 117.468 )( 31.3101 , 116.838 ) ( 32.0379 ,
116.838 )( 32.0379 , 116.111 ) ( 32.5296 , 116.111 )( 32.5296 ,
115.531 ) ( 33.0526 , 115.531 )( 33.0526 , 115.112 ) ( 33.3452 ,
115.112 )( 33.3452 , 113.926 ) ( 33.5192 , 113.926 )( 33.5192 ,
113.347 ) ( 34.0154 , 113.347 )( 34.0154 , 113.293 ) ( 36.1155 ,
113.293 )( 36.1155 , 110.84 ) ( 36.3134 , 110.84 )( 36.3134 ,
110.576 ) ( 37.1146 , 110.576 )( 37.1146 , 110.336 ) ( 37.5161 ,
110.336 )( 37.5161 , 109.856 ) ( 39.0965 , 109.856 )( 39.0965 ,
109.361 ) ( 39.1982 , 109.361 )( 39.1982 , 108.12 ) ( 41.2138 ,
108.12 )( 41.2138 , 106.558 ) ( 42.4826 , 106.558 )( 42.4826 ,
105.476 ) ( 42.524 , 105.476 )( 42.524 , 104.92 ) ( 42.7177 , 104.92
)( 42.7177 , 104.841 ) ( 42.739 , 104.841 )( 42.739 , 104.767 ) (
43.4227 , 104.767 )( 43.4227 , 104.31 ) ( 44.4536 , 104.31 )(
44.4536 , 103.252 ) ( 44.6386 , 103.252 )( 44.6386 , 103.037 ) (
46.1883 , 103.037 )( 46.1883 , 101.976 ) ( 47.443 , 101.976 )(
47.443 , 99.4383 ) ( 48.9235 , 99.4383 )( 48.9235 , 98.4571 ) (
49.3092 , 98.4571 )( 49.3092 , 98.0371 ) ( 49.774 , 98.0371 )(
49.774 , 97.8187 ) ( 49.8035 , 97.8187 )( 49.8035 , 97.7128 ) (
51.6117 , 97.7128 )( 51.6117 , 96.5417 ) ( 54.757 , 96.5417 )(
54.757 , 92.6536 ) ( 55.5015 , 92.6536 )( 55.5015 , 92.4824 ) (
59.857 , 92.4824 )( 59.857 , 87.5588 ) ( 60.9829 , 87.5588 )(
60.9829 , 87.2631 ) ( 61.6822 , 87.2631 )( 61.6822 , 86.7724 ) (
62.2196 , 86.7724 )( 62.2196 , 85.4493 ) ( 64.0775 , 85.4493 )(
64.0775 , 82.9518 ) ( 65.2361 , 82.9518 )( 65.2361 , 81.3453 ) (
66.9776 , 81.3453 )( 66.9776 , 81.2816 ) ( 66.9891 , 81.2816 )(
66.9891 , 80.0105 ) ( 69.5915 , 80.0105 )( 69.5915 , 77.7917 ) (
70.228 , 77.7917 )( 70.228 , 76.4585 ) ( 71.8602 , 76.4585 )(
71.8602 , 76.4328 ) ( 76.8009 , 76.4328 )( 76.8009 , 71.7273 ) (
77.3354 , 71.7273 )( 77.3354 , 71.2488 ) ( 78.0789 , 71.2488 )(
78.0789 , 69.5738 ) ( 81.3703 , 69.5738 )( 81.3703 , 66.5731 ) (
82.9366 , 66.5731 )( 82.9366 , 65.5116 ) ( 83.116 , 65.5116 )(
83.116 , 63.8384 ) ( 84.1717 , 63.8384 )( 84.1717 , 62.7488 ) (
87.3047 , 62.7488 )( 87.3047 , 61.0682 ) ( 90.3656 , 61.0682 )(
90.3656 , 57.3202 ) ( 91.4925 , 57.3202 )( 91.4925 , 56.1526 ) (
91.8287 , 56.1526 )( 91.8287 , 55.4975 ) ( 95.2544 , 55.4975 )(
95.2544 , 54.08 ) ( 96.4635 , 54.08 )( 96.4635 , 52.6625 ) ( 96.9768
, 52.6625 )( 96.9768 , 50.4186 ) ( 97.5024 , 50.4186 )( 97.5024 ,
50.3255 ) ( 99.3293 , 50.3255 )( 99.3293 , 48.5651 ) ( 100.615 ,
48.5651 )( 100.615 , 46.7881 ) ( 104.25 , 46.7881 )( 104.25 , 43.279
) ( 108.177 , 43.279 )( 108.177 , 41.8299 ) ( 108.375 , 41.8299 )(
108.375 , 39.8163 ) ( 109.725 , 39.8163 )( 109.725 , 38.7162 ) (
115.033 , 38.7162 )( 115.033 , 33.7155 ) ( 116.813 , 33.7155 )(
116.813 , 32.8684 ) ( 118.388 , 32.8684 )( 118.388 , 31.0066 ) (
119.271 , 31.0066 )( 119.271 , 30.1307 ) ( 120.027 , 30.1307 )(
120.027 , 27.1157 ) ( 120.522 , 27.1157 )( 120.522 , 27.0183 ) (
120.607 , 27.0183 )( 120.607 , 26.0109 ) ( 121.771 , 26.0109 )(
121.771 , 26.0047 ) ( 122.06 , 26.0047 )( 122.06 , 25.3338 ) (
122.433 , 25.3338 )( 122.433 , 24.3521 ) ( 125.002 , 24.3521 )(
125.002 , 23.3576 ) ( 126.647 , 23.3576 )( 126.647 , 21.9386 ) (
127.865 , 21.9386 )( 127.865 , 18.7059 ) ( 128.048 , 18.7059 )(
128.048 , 18.4906 ) ( 131.709 , 18.4906 )( 131.709 , 16.348 ) (
133.422 , 16.348 )( 133.422 , 15.6255 ) ( 134.877 , 15.6255 )(
134.877 , 13.5231 ) ( 135.715 , 13.5231 )( 135.715 , 10.7266 ) (
136.491 , 10.7266 )( 136.491 , 10.6685 ) ( 139.202 , 10.6685 )(
139.202 , 7.92818 ) ( 139.411 , 7.92818 )( 139.411 , 7.59583 ) (
140.285 , 7.59583 )( 140.285 , 7.50318 ) ( 144.367 , 7.50318 )(
144.367 , 4.05198 ) ( 144.705 , 4.05198 )( 144.705 , 3.43208 ) (
144.705 , 3.43208 )( 144.705 , 2.73945 ) ( 146.169 , 2.73945 )(
146.169 , 2.55625 ) ( 147.446 , 2.55625 )( 147.446 , 2.11898 ) (
148.013 , 2.11898 )( 148.013 , 0.755152 ) ( 148.059 , 0.755152 )(
148.059 , 0.598016 ) ( 148.754 , 0.598016 )( 148.754 , 0 )
};
\draw (83,97.5) node[color=black] {\footnotesize $\widetilde{F}_{10,000}$};
\end{axis}
\end{tikzpicture}

\caption{
Record frontier $\widetilde{F}_{10,000}$ after $10,000$ records generated using the importance-sampling algorithm
described in~\cite{Fillgenerating(2018)}.
\label{fig:simulation}
}

\end{figure}

The record-setting region process $(\mbox{RS}_n)$, and therefore also the frontier process $(F_n)$ we
have studied in earlier sections, is adapted to the natural filtration for the process $C = (C_n)_{n \geq 0}$, where $C_n = (C^{(1)}_n, \dots, C^{(r_n)}_n)$ is the $r_n$-tuple of
remaining records at time~$n$ in order of creation.  Let $T_0 = 0$, and for $m \geq 1$ let $T_m$ denote the $m$th record-creation epoch; note that~$C$ remains constant over each of the time-intervals $[T_{m - 1}, T_m)$, $m \geq 1$.  Fill and Naiman~\cite{Fillgenerating(2018)} don't simulate the \iid\ observations process $X^{(1)}, X^{(2)}, \dots$ (that is, they don't work in ``observations-time''), but rather simulate the process $\tC = (\tC_m)_{m \geq 0}$, where $\tC_m := C_{T_m}$ [and hence the processes $(\widetilde{{\mbox RS}}_m := \mbox{RS}_{T_m})$ and
 $(\tB_m := F_{T_m})$] (that is, they work in ``records-time'').  The following goal thus naturally arises: Translate results about~$C$ to results about~$\tC$.

The keys to doing so are (i)~monotonicity of the sample paths of various processes of interest (such as $F^+$ and $F^-$) and (ii)~the switching relation
\begin{equation}
\label{switch}
\{T_m \leq n\} = \{R_n \geq m\}.
\end{equation}
The switching relation enables us to obtain information about the record-creation times $T_m$
from the records-counts Theorems \ref{T:typical counts}(b) and \ref{T:as}(a).  The following proposition is not the most elaborate result which can be obtained in such fashion, but it will suffice for our purposes.

\begin{proposition}
\label{P:time}
Let $T_m$ denote the $m^{\mbox{\rm \scriptsize th}}$ epoch at which a record is set, and let~$\gamma$ denote the Euler--Mascheroni constant.
\smallskip

\noindent
{\rm (a)~\underline{Typical behavior as $m \to \infty$}:\ }
\smallskip

{\rm (a1)}~If $d = 1$, then
\[
\frac{\L T_m - (m - \gamma)}{m^{1/2}} \Lcto\mbox{\rm \ standard normal}.
\]

{\rm (a2)}~If $d = 2$, then
\[
\frac{\L T_m - [(2 m)^{1 / 2} - \gamma]}{\left( \frac{\pi^2}{6} + \frac{1}{2} \right)^{1/2}} \Lcto\mbox{\rm \ standard normal}.
\]

{\rm (a3)}~If $d \geq 3$, then
\[
\L T_m - [(d! m)^{1 / d} - \gamma] \Pto 0.
\]
\smallskip

\noindent
{\rm (b)~\underline{Almost sure behavior as $m \to \infty$}:\ }
\smallskip

{\rm (b1)}~For every $d \geq 1$ we have
\[
\frac{\L T_m}{(d! m)^{1/d}} \asto 1.
\]

{\rm (b2)}~If $d \geq 5$, then
\[
\L T_m - [(d! m)^{1/d} - \gamma] \asto 0.
\]
\end{proposition}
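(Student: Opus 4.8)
The plan is to run everything off the switching relation~\eqref{switch}. Since $T_m$ is integer-valued, for every real $c \ge 0$ one has $\{\L T_m \le c\} = \{T_m \le \lfloor e^c \rfloor\} = \{R_{\lfloor e^c \rfloor} \ge m\}$, so the sharp information about $R_n$ recorded in Theorems~\ref{T:typical counts}(b) and~\ref{T:as}(a) converts directly into information about $\L T_m$. I will also use repeatedly that $R$ has nondecreasing integer-valued sample paths with $R_{T_m} = m$ and $T_m \ge m \to \infty$, and that $R_n \sim (\L n)^d/d!$ \as\ (combine \refT{T:as}(a) with $\E R_n \sim (\L n)^d/d!$ from \refT{T:typical counts}(b)).

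For the almost sure parts I would argue as follows. Setting $n = T_m$ in $R_n \sim (\L n)^d/d!$ \as\ and using $R_{T_m} = m$ gives $m \sim (\L T_m)^d/d!$ \as, which is~(b1). For~(b2) one first matches the two leading terms of the expansion of $\E R_n$ in \refT{T:typical counts}(b) with those of $(\L n + \gamma)^d/d!$ (using $\Gamma'(1) = -\gamma$) to obtain $\E R_n = \tfrac{1}{d!}(\L n + \gamma)^d + O((\L n)^{d-2})$. Then one picks $\epsilon \in (0, d/4 - 1)$ --- possible exactly because $d \ge 5$ --- so that $3d/4 + \epsilon < d - 1$; by \refT{T:as}(a) the fluctuation $R_n - \E R_n = O((\L n)^{3d/4 + \epsilon})$ \as\ and the $O((\L n)^{d-2})$ remainder are then both $o((\L n)^{d-1})$. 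Evaluating $m = R_{T_m}$ and using~(b1) to rewrite the error terms as powers of $m$ yields $d!\,m = (\L T_m + \gamma)^d + o((\L T_m)^{d-1})$ \as; extracting $d$th roots gives $(d! m)^{1/d} = \L T_m + \gamma + o(1)$ \as, which is~(b2).

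The typical-behavior parts (a1)--(a3) follow by the same ``inversion via~\eqref{switch}''. Fix $x \in \bbR$ in cases (a1)--(a2) and $\epsilon > 0$ in case (a3); let $c_m$ be $m - \gamma + x\sqrt{m}$ in~(a1), $(2m)^{1/2} - \gamma + x(\pi^2/6 + 1/2)^{1/2}$ in~(a2), and $(d! m)^{1/d} - \gamma \pm \epsilon$ in~(a3); and put $n_m := \lfloor e^{c_m} \rfloor$, so that $\L n_m = c_m + o(1)$ and $\P(\L T_m \le c_m) = \P(R_{n_m} \ge m)$. Feeding $\L n_m$ into the mean expansion above (for $d = 1, 2$ the remainder past the $(\L n + \gamma)^d/d!$ term is merely $O(1)$) and using $\Var R_{n_m} = (1 + o(1))\,\gamma_{d+1,0}(\L n_m)^d$ with the explicit values $\gamma_{2,0} = 1$ and $\gamma_{3,0} = \pi^2/6 + 1/2$, one computes
\[
\frac{m - \E R_{n_m}}{\sqrt{\Var R_{n_m}}} \longrightarrow
\begin{cases}
-x & \mbox{in (a1) and (a2),} \\
-\infty & \mbox{in (a3) with $c_m = (d! m)^{1/d} - \gamma + \epsilon$,} \\
+\infty & \mbox{in (a3) with $c_m = (d! m)^{1/d} - \gamma - \epsilon$,}
\end{cases}
\]
the divergences in~(a3) holding because there $|m - \E R_{n_m}| = \Theta(m^{(d-1)/d})$ while $\sqrt{\Var R_{n_m}} = \Theta(m^{1/2})$ and $(d-1)/d > 1/2$ for $d \ge 3$. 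The Berry--Esseen CLT of \refT{T:typical counts}(b) (uniform convergence of the distribution function of the left-hand side above to $\Phi$) then gives $\P(R_{n_m} \ge m) \to \Phi(x)$ in (a1)--(a2) and $\P(R_{n_m} \ge m) \to 1$ (resp.\ $\to 0$) in~(a3) for the choice $c_m = (d! m)^{1/d} - \gamma + \epsilon$ (resp.\ $c_m = (d! m)^{1/d} - \gamma - \epsilon$); reading this back through $\P(\L T_m \le c_m) = \P(R_{n_m} \ge m)$ delivers the weak convergence asserted in (a1)--(a2), and combining the two sign choices delivers $\L T_m - [(d! m)^{1/d} - \gamma] \to 0$ in probability in~(a3).

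None of this is deep; the labor is all bookkeeping, and the two places I would be most careful are: (i)~the two-term mean expansion $\E R_n = \tfrac{1}{d!}(\L n + \gamma)^d + O((\L n)^{d-2})$, together with the check that the almost sure error exponent $3d/4 + \epsilon$ from \refT{T:as}(a) beats $d - 1$ precisely when $d \ge 5$ (this being the real reason~(b2) is restricted to $d \ge 5$); and (ii)~the explicit variance constants $\gamma_{2,0} = 1$ and $\gamma_{3,0} = \pi^2/6 + 1/2$ needed to make the normalizations in (a1)--(a2) come out exactly as stated --- these are instances of the ``explicit forms'' mentioned in connection with \refT{T:typical counts} and may be taken from Bai et al. The floor in $n_m = \lfloor e^{c_m} \rfloor$ perturbs $\L n_m$ by only $O(e^{-c_m}) = o(1)$ and causes no trouble.
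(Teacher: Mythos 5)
Your proposal is correct and follows essentially the same route the paper takes: invert the switching relation $\{T_m \le n\} = \{R_n \ge m\}$, then import the CLT with explicit mean/variance expansions and the explicit $\gamma_{2,0}, \gamma_{3,0}$ from Theorem~\ref{T:typical counts}(b) for parts (a1)--(a3), and the almost-sure concentration from Theorem~\ref{T:as}(a) (with the $3d/4+\epsilon < d-1$ check forcing $d\ge 5$) for part~(b2). The two-term matching $\E R_n = (\L n + \gamma)^d/d! + O((\L n)^{d-2})$ that you use is exactly the computation the paper carries out in slightly different notation, and your handling of the floor in $n_m = \lfloor e^{c_m}\rfloor$ matches the paper's $\epsilon_m$ adjustment.
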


Concerning elaborations on \refP{P:time}(b2), see \refR{R:last}(b).

\begin{proof}
Fix $d \geq 1$.
\smallskip

(a)~Given $\epsilon > 0$, by the switching relation~\eqref{switch} and \refT{T:typical counts}(b) we
have
\begin{align}
\label{smallprob}
\P(\L T_m - [(d! m)^{1 / d} - \gamma] > \epsilon)
&= \P(T_m > \exp[(d! m)^{1 / d} - \gamma + \epsilon]) \\
&= \P(T_m > n) = \P(R_n < m) \nonumber \\
&= \Phi\left( \frac{m - \E R_n}{\sqrt{\Var R_n}} \right) + o(1) \nonumber
\end{align}
as $m \to \infty$, where
$0 \leq \epsilon_m = o(1)$ is chosen as small as possible to make
$n \equiv n_m := \exp[(d! m)^{1 / d} - \gamma + \epsilon - \epsilon_m]$ an integer.  But
$\L n = (d! m)^{1 / d} - \gamma + \epsilon - o(1)$, so
\[
(\L n)^d = d! m [1 - (1 + o(1)) (\gamma - \epsilon) d (d! m)^{-1/d}] \quad \mbox{and} \quad
(\L n)^{-1} \sim (d! m)^{-1/d},
\]
and hence by \refT{T:typical counts}(b)
\begin{align*}
\E R_n
&= \frac{(\L n)^d}{d!} [1 + (1 + o(1)) \gamma d (\L n)^{-1}] \\
&= m [1 - (1 + o(1)) (\gamma - \epsilon) d (d! m)^{-1/d}] [1 + (1 + o(1)) \gamma d (d! m)^{-1/d}] \\
&= m [1 + (1 + o(1)) \epsilon d (d! m)^{-1/d}] = m + (1 + o(1)) \epsilon d (d!)^{-1/d} m^{(d - 1) / d}
\end{align*}
and
\[
\sqrt{\Var R_n} \sim \sqrt{\gamma_{d + 1, 0}} (\L n)^{d / 2} \sim (\gamma_{d + 1, 0}\,d!\,m)^{1/2}
=  \Theta(m^{1/2}).
\]
Thus $(m - \E R_n) / \sqrt{\Var R_n}$ is negative and of magnitude
$\Theta(m^{\frac{d-1}{d} - \frac{1}{2}})$.
\smallskip

(a3)~If $d \geq 3$, it follows that the probability~\eqref{smallprob} tends to~$0$, and similarly
\[
\P(\L T_m - [(d! m)^{1 / d} - \gamma] \leq - \epsilon) \to 0,
\]
yielding the claimed convergence in probability.
\smallskip

(a2)~If $d = 2$, then the same calculations show that for any real~$x$ we have
\[
\P(\L T_m - [(2 m)^{1 / 2} - \gamma] > x)
= \Phi\left( - \gamma_{3, 0}^{-1/2} x \right) + o(1),
\]
yielding the claimed CLT, since from~\cite{Bai(2005)}, $\gamma_{3, 0} = \frac{\pi^2}{6} + \frac{1}{2}$.
\smallskip

(a1)~If $d = 1$, then the same calculations show that for any real~$x$ we have
\[
\P(\L T_m - [m - \gamma] > x) = \Phi\left( - (1 + o(1)) \frac{x}{(\gamma_{2, 0}\,m)^{1/2}} \right) + o(1),
\]
yielding the claimed CLT, since $\gamma_{2, 0} = 1$.
\smallskip

(b1)~This follows readily from the conclusion $R_n / \E R_n \asto 1$ of \refT{T:as}(a) by first recalling from \refT{T:typical counts}(b) that $\E R_n \sim (\L n)^d / d!$; then setting $n = T_m$, noting $R_{T_m} = m$; and finally taking $-d^{-1}$ powers.
\smallskip

(b2)~According to \refT{T:as}, if $\epsilon > 0$ then as $n \to \infty$ we \as\ have
\[
R_n = \rho_n + O((\L n)^{\frac{3 d}{4} + \epsilon}),
\]
where~$\rho$ is the mean function for~$R$.  In particular, setting $n = T_m$, as $m \to \infty$ we \as\ have
\[
m = \rho_{T_m} + O((\L T_m)^{\frac{3 d}{4} + \epsilon}).
\]
If $d \geq 5$, then $d - 1 > (3 d) / 4$ and thus [from \refT{T:typical counts}(b)] almost surely
\[
m = \frac{(\L T_m)^d}{d!} [1 + (1 + o(1)) \gamma d (\L T_m)^{-1}],
\]
which implies
\[
(d! m)^{1/d} = (\L T_m) [1 + (1 + o(1)) \gamma (\L T_m)^{-1}] = \L T_m + \gamma + o(1),
\]
as desired.
\end{proof}

\begin{example}
\label{E:r as records-time}
Here
is a first illustration of the usefulness of \refP{P:time} in connection with the simulations of records discussed at the outset of this section.  Define $\tr_m := r_{T_m}$.  From these simulations it is reasonable to conjecture that
\begin{equation}
\label{conjecture}
\frac{\tr_m}{(d! m)^{(d - 1) / d}} \asto \frac{1}{(d - 1)!}\mbox{\ as $m \to \infty$}.
\end{equation}
But we now show that the records-time conjecture~\eqref{conjecture} is in fact equivalent to the
observations-time conjecture~\eqref{r}---and therefore both conjectures are [by \refT{T:as}(c) and the expected value asymptotics in \refT{T:typical counts}(a)] true at least for $d \geq 5$.

Indeed, \eqref{conjecture} follows immediately from~\eqref{r} by substitution of $T_m$ for~$n$ and use of \refP{P:time}(b1).  To sketch a proof of the converse, consider the ratio on the left in~\eqref{r} for $T_m \leq n < T_{m + 1}$.  For the numerator of the ratio, note that $r_n = r_{T_m}$.  Use $T_m \leq n < T_{m + 1}$ in the denominator to get upper and lower bounds on the ratio, and then use \refP{P:time}(b1) to relate the upper and lower bounds on the ratio in~\eqref{r} to the ratio in~\eqref{conjecture}.
 \end{example}
 \medskip

We can now translate results of \refS{S:intro} from observations-time to records-time (the main goal of this section being to translate \refT{T:W} about frontier width in this fashion), but [because of the limitation of \refP{P:time}(b2)] we only know how to translate some of our almost sure results when $d \geq 5$.

\begin{theorem}
\label{T:t+}
Consider the process $\tB^+$ defined by $\tB^+_m := F^+_{T_m}$.
\medskip

\noindent
{\rm (a)~\underline{Typical behavior of $\tB^+$}:\ }
\smallskip

{\rm (a1)}~For any $d \geq 2$ we have
\[
\frac{\tB_m^+ - (d! m)^{1/d}}{\L m} \Pto 1 - d^{-1}.
\]

{\rm (a2)}~If $d \geq 3$ we have the following convergence in law to Gumbel:
\[
\tB_m^+ - [(d! m)^{1/d} + (1 - d^{-1}) \L m + \L d - d^{-1} \L(d!) - \gamma] \Lcto G.
\]
\medskip

\noindent
{\rm (b)~\underline{Almost sure behavior for $\tB^+$}:\ }
\smallskip

{\rm (b1)}~For any $d \geq 1$ we have
\[
\tB^+_m \sim (d! m)^{1 / d} \mbox{\rm \ \as}
\]

{\rm (b2)}~If $d \geq 5$, then
\[
\liminf \frac{\tB_m^+ - (d! m)^{1/d}}{\L m} = 1 - d^{-1} < 1 =
\limsup \frac{\tB_m^+ - (d! m)^{1/d}}{\L m} \mbox{\rm \ \as}
\]
\end{theorem}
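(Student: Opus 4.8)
The plan is to transfer each statement about $(F^+_n)_{n\ge1}$, as $n\to\infty$, to the corresponding statement about $(\tB^+_m)_{m\ge1}=(F^+_{T_m})_{m\ge1}$, as $m\to\infty$, by substituting the random index $T_m$ for $n$: use the switching relation~\eqref{switch} together with \refP{P:time} to locate $T_m$ (equivalently, $\L T_m$) asymptotically, and use the monotonicity of the sample paths of $F^+$ (\refL{L:+}) to trap $F^+_{T_m}$ between the values of $F^+$ at suitable \emph{deterministic} times. Part~(b1) is then immediate: by \refC{C:+}(b) we have $F^+_n\sim\L n$ \as\ as $n\to\infty$, hence $\tB^+_m\sim\L T_m$ \as\ (since $T_m\uparrow\infty$), and by \refP{P:time}(b1) we have $\L T_m\sim(d!\,m)^{1/d}$ \as; multiplying gives~(b1).

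For~(a1), let $n^\pm_m$ be the integer parts of $\exp[(d!\,m)^{1/d}\pm\L_2 m]$. Since $\L T_m-(d!\,m)^{1/d}=\Op(1)$ for $d\ge2$ by \refP{P:time}(a2)--(a3), while $\L_2 m\to\infty$, we have $\P(n^-_m\le T_m\le n^+_m)\to1$, and on that event the nondecreasingness of $F^+$ gives $F^+_{n^-_m}-\L n^+_m\le\tB^+_m-\L T_m\le F^+_{n^+_m}-\L n^-_m$. Dividing by $\L_2 n^-_m$, and noting that $\L n^+_m-\L n^-_m=(2+o(1))\L_2 m=o(\L m)$ while $\L_2 n^\pm_m\sim d^{-1}\L m$, the two bounding quantities converge in probability to $d-1$ by \refC{C:+}(a) applied along the deterministic subsequences $(n^\pm_m)$; hence $(\tB^+_m-\L T_m)/\L m\Pto1-d^{-1}$, and since $(\L T_m-(d!\,m)^{1/d})/\L m=\op(1)$ we obtain~(a1).

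For~(a2) (so $d\ge3$) write $g(n):=\L n+(d-1)\L_2 n-\L((d-1)!)$, so that \refT{T:+}(a) reads $F^+_n-g(n)\Lcto G$, and let $N_m$ be the integer part of $\exp[(d!\,m)^{1/d}-\gamma]$. A short computation (using $\L(d!)=\L d+\L((d-1)!)$) shows that $g(N_m)$ agrees up to $o(1)$ with the centering constant $a_m:=(d!\,m)^{1/d}+(1-d^{-1})\L m+\L d-d^{-1}\L(d!)-\gamma$, so it suffices to prove $\tB^+_m-g(N_m)\Lcto G$. Fix $\eta>0$ and let $N^\pm_m$ be the integer parts of $\exp[(d!\,m)^{1/d}-\gamma\pm\eta]$; by \refP{P:time}(a3), $\P(N^-_m\le T_m\le N^+_m)\to1$, and on that event the nondecreasingness of $F^+$ gives $F^+_{N^-_m}-g(N_m)\le\tB^+_m-g(N_m)\le F^+_{N^+_m}-g(N_m)$. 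Since $\L N^\pm_m-\L N_m\to\pm\eta$ and $\L_2 N^\pm_m-\L_2 N_m\to0$, we have $g(N^\pm_m)-g(N_m)\to\pm\eta$, so \refT{T:+}(a) along $(N^\pm_m)$ and Slutsky's theorem give $F^+_{N^-_m}-g(N_m)\Lcto G-\eta$ and $F^+_{N^+_m}-g(N_m)\Lcto G+\eta$; the sandwich then yields $e^{-e^{-(x-\eta)}}\le\liminf_m\P(\tB^+_m-g(N_m)\le x)\le\limsup_m\P(\tB^+_m-g(N_m)\le x)\le e^{-e^{-(x+\eta)}}$ for every $x$, and letting $\eta\downarrow0$ finishes~(a2).

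Part~(b2) is the part I expect to require the most care. It rests on the pathwise observation that $F^+$ \emph{increases only at record-creation epochs}: by \refL{L:+}, $F^+_{n+1}=\max(F^+_n,X^{(n+1)}_+)$, and if $X^{(n+1)}_+>F^+_n=\max_{k\le n}X^{(k)}_+$ then $X^{(n+1)}$ cannot be dominated by any earlier observation (its coordinate-sum would otherwise be smaller), so it sets a record; consequently $F^+_n=\tB^+_{m(n)}$ whenever $T_{m(n)}\le n<T_{m(n)+1}$. Now take $d\ge5$ and invoke \refP{P:time}(b2): \as, $\L T_m=(d!\,m)^{1/d}-\gamma+o(1)$, and since $(d!(m+1))^{1/d}-(d!\,m)^{1/d}\to0$ the same holds with $T_m$ replaced by $T_{m+1}$; hence, \as, $\L n-(d!\,m(n))^{1/d}\to-\gamma$ and $\L_2 n\sim d^{-1}\L m(n)$ as $n\to\infty$. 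Combining these with $F^+_n=\tB^+_{m(n)}$ and the a priori bound $\tB^+_m-(d!\,m)^{1/d}=O(\L m)$ \as\ (a consequence of \refC{C:+}(b)), one gets, \as,
\[
\frac{F^+_n-\L n}{\L_2 n}=d\,\frac{\tB^+_{m(n)}-(d!\,m(n))^{1/d}}{\L m(n)}+o(1)\qquad(n\to\infty).
\]
Since $m(\cdot)$ is eventually onto ($m(T_m)=m$ and $m(n)\to\infty$), the sequences $\bigl((F^+_n-\L n)/\L_2 n\bigr)_n$ and $\bigl(d\,(\tB^+_m-(d!\,m)^{1/d})/\L m\bigr)_m$ have the same $\liminf$ and the same $\limsup$ \as; by \refC{C:+}(b) these equal $d-1$ and $d$, so $\liminf_m(\tB^+_m-(d!\,m)^{1/d})/\L m=1-d^{-1}$ and $\limsup_m(\tB^+_m-(d!\,m)^{1/d})/\L m=1$ \as, which is~(b2). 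The crux is exactly this last step: one must use the \emph{pathwise} identification of the two indexings, not merely a distributional or almost-sure limit (which would only give one-sided bounds along the sparse subsequence $(T_m)$), and the hypothesis $d\ge5$ is inherited from the range of validity of \refP{P:time}(b2).
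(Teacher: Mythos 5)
Your proof is correct and takes essentially the same approach as the paper's: every part is transferred from observations-time to records-time via monotonicity of $F^+$'s sample paths and Proposition~\ref{P:time}, trapping $F^+_{T_m}$ between values of $F^+$ at deterministic indices. The only notable variation is in~(b2), where the paper exploits the monotonicity of $n\mapsto(b-\L n)/\L_2 n$ over each constancy block $[T_m,T_{m+1})$ to attain the $\limsup$/$\liminf$ at the block endpoints, whereas you derive the a.s.\ asymptotic identity $(F^+_n-\L n)/\L_2 n = d\,\bigl(\tB^+_{m(n)}-(d!\,m(n))^{1/d}\bigr)/\L m(n)+o(1)$ via the a priori bound $\tB^+_m-(d!\,m)^{1/d}=O(\L m)$ a.s.\ and then match $\liminf$/$\limsup$ through surjectivity of $m(\cdot)$; both routes are substantively equivalent.
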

\smallskip

\begin{proof}
(a2)~Assume that $d \geq 3$ and let
\[
\tG_m := \tB_m^+ - [(d! m)^{1/d} + (1 - d^{-1}) \L m + \L d - d^{-1} \L(d!) - \gamma].
\]
Given $x \in \bbR$ and $\epsilon > 0$, we will show that
\begin{equation}
\label{G}
\P(\tG_m \leq x) \geq \P(G \leq x - \epsilon) - o(1),
\end{equation}
and a similar proof establishes $\P(\tG_m \leq x) \leq \P(G \leq x + \epsilon) + o(1)$.  Letting $m \to \infty$ and then $\epsilon \downarrow 0$ completes the proof of~(a2), and~(a1) is a simple consequence.  

We now prove~\eqref{G}.  By \refP{P:time}(a3) and nondecreasingness of the sample paths of $F^+$, we have
\[
\P(\tG_m \leq x)
\geq \P\Big( F^+_n \leq x + (d! m)^{1/d} + (1 - d^{-1}) \L m + \L d - d^{-1} \L(d!) - \gamma \Big) - o(1),
\]
where $n \equiv n_m = \lfloor \exp[(d! m)^{1/d} - \gamma + \epsilon] \rfloor$.  Observe that
\[
\L n = (d! m)^{1/d} - \gamma + \epsilon - o(1) \quad \mbox{and} \quad
\L_2 n = d^{-1} [\L m + \L(d!)] + o(1),
\]
and so
\begin{align*}
\lefteqn{\hspace{-.3in}\L n + (d - 1) \L_2 n - L((d -1)!)} \\
&= (d! m)^{1/d} + (1 - d^{-1}) \L m + \L d - d^{-1} \L(d!) - \gamma + \epsilon - o(1).
\end{align*}
Thus, making use of Theorem \ref{T:+}(a), we arrive at
\begin{align*}
\lefteqn{\hspace{-.1in}\P(\tG_m \leq x)} \\
&\geq \P\!\Big( F^+_n - [ \L n + (d - 1) \L_2 n - L((d -1)!] \leq x - \epsilon + o(1) \Big) - o(1) \\
&= \P(G \leq x - \epsilon) - o(1),
\end{align*}
as desired.

(a1)~We have already proved (a1) for $d \geq 3$.  A similar proof establishes (a1) if $d = 2$.

(b1)~By \refC{C:+}(b) and \refP{P:time}(b1), the following asymptotic equivalences hold \as:
\[
\tB^+_m = F^+_{T_m} \sim \L T_m \sim (d! m)^{1 / d}. 
\]

(b2)~One checks easily for $b \geq 0$ that $(b - \L n) / \L_2 n$ decreases for $n \geq 15$, and so
$(F^+_n - \L n) / \L_2 n$ decreases over each of the time-intervals $[T_{m - 1}, T_m)$ with~$m$ large.  (It is sufficient to choose $m \geq 16$.)  It follows that
\begin{equation}
\label{limsup}
\limsup_{n \to \infty} \frac{F^+_n - \L n}{\L_2 n} = \limsup_{m \to \infty} \frac{\tB^+_m - \L T_m}{\L_2 T_m}
\end{equation}
and
\begin{align*}
\liminf_{n \to \infty} \frac{F^+_n - \L n}{\L_2 n}
&= \liminf_{m \to \infty} \frac{F^+_{T_m - 1} - \L(T_m - 1)}{\L_2(T_m - 1)} \\
&= \liminf_{m \to \infty} \frac{\tB^+_{m - 1} - \L(T_m - 1)}{\L_2(T_m - 1)} \\
&= \liminf_{m \to \infty} \frac{\tB^+_m - \L T_{m + 1} + o(1)}{\L_2 T_{m + 1} - o(1)}.
\end{align*}
But, by \refP{P:time}(b2), almost surely
\[
\L T_{m + 1} = [d! (m + 1)]^{1/d} - \gamma + o(1) = d! m^{1/d} + O(1)
\]
and hence
\[
\L_2 T_{m + 1} = d^{-1} \L m + O(1),
\]
whence
\begin{align*}
\liminf_{n \to \infty} \frac{F^+_n - \L n}{\L_2 n}
&= \liminf_{m \to \infty} \frac{\tB^+_m - \L T_{m + 1} + o(1)}{\L_2 T_{m + 1} - o(1)} \\
&= \liminf_{m \to \infty} \frac{\tB^+_m - d! m^{1/d} + O(1)}{d^{-1} \L m + O(1)} \\
&= d \liminf_{m \to \infty} \frac{\tB^+_m - d! m^{1/d}}{\L m};
\end{align*}
similarly, by \eqref{limsup},
\[
\limsup_{n \to \infty} \frac{F^+_n - \L n}{\L_2 n}
= d \limsup_{m \to \infty} \frac{\tB^+_m - d! m^{1/d}}{\L m}.
\]
The desired result now follows from \refC{C:+}(b).
\end{proof}
\smallskip

\begin{remark}
\label{R:interval t+}
In the same manner as \refR{R:interval+}, one can show that the set of limit points of the sequence
$[\tB_m^+ - (d! m)^{1/d}] / \L m$ is for $d \geq 5$ almost surely the closed interval
$[1 - d^{-1}, 1]$.
\end{remark}

\begin{theorem}
\label{T:t-}
Consider the process $\tB^-$ defined by $\tB^-_m := F^-_{T_m}$.
\medskip

{\rm (a)~\underline{Typical behavior of $\tB^-$}:\ }If $d \geq 2$, then
\[
\P(\tB_m^- \leq (d! m)^{1/d} - 3 \L_2 m) \to 0
\]
and
\[
\P(\tB_m^- \geq (d! m)^{1/d} + c_m) \to 0\mbox{\rm \ if $c_m \to \infty$}.
\]
As a consequence,
\[
\frac{\tB_m^- - (d! m)^{1/d}}{\L m} \Pto 0.
\]
\smallskip

{\rm (b)~\underline{Almost sure behavior for $\tB^-$}:\ }If $d \geq 5$, then
\[
\lim \frac{\tB_m^- - (d! m)^{1/d}}{\L m} = 0\mbox{\rm \ \as}
\]
\end{theorem}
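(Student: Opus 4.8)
The plan is to transport \refT{T:-} and \refC{C:-} from observations-time to records-time, combining the monotonicity of the sample paths of $F^-$ and of $B^+(\cdot)$ from \refL{L:-} with the control on the record-creation epochs $T_m$ furnished by \refP{P:time}. For the first assertion of~(a), fix $d\ge2$, pick any $c_m\to\infty$ with $c_m=o(\L_2 m)$ (say $c_m=\L_3 m$), and set $n_m:=\lceil\exp[(d!\,m)^{1/d}-\gamma-c_m]\rceil$. Because $c_m\to\infty$, \refP{P:time}(a) --- via the central-limit tightness of $\L T_m-[(2m)^{1/2}-\gamma]$ when $d=2$, and via the stronger statement (a3) when $d\ge3$ --- gives $\P(T_m<n_m)\to0$, so nondecreasingness of $F^-$ yields $\P(\tB_m^-\le(d!\,m)^{1/d}-3\L_2 m)\le\P(F^-_{n_m}\le(d!\,m)^{1/d}-3\L_2 m)+o(1)$. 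The essential step is then to read the deterministic threshold in observations-time: from $\L n_m=(d!\,m)^{1/d}-\gamma-c_m+o(1)$ one gets $\L_3 n_m=\L_2 m+O(1)$ and, after a short computation, $(d!\,m)^{1/d}-3\L_2 m=\L n_m-3\L_3 n_m+g_m$ with $g_m=o(\L_3 n_m)$. Thus $b_{n_m}:=(d!\,m)^{1/d}-3\L_2 m$ meets the hypotheses of \refP{P:-lower} ($0\le b_{n_m}<\L n_m$, $b_{n_m}=(1-o(1))\L n_m$, $\L n_m-b_{n_m}=(3-o(1))\L_3 n_m\to\infty$), and that proposition bounds $\P(F^-_{n_m}\le b_{n_m})$ by $(\L n_m)^{d-1}\exp[-\exp\{(\tfrac32+o(1))\L_3 n_m\}]=\exp[(d-1)\L_2 n_m-(\L_2 n_m)^{3/2+o(1)}]\to0$.

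For the second assertion of~(a) I would instead use $F^-_n\le B^+_n(1)$ from \refL{L:-}(a), where $B^+_n(1)$ is the maximum of $n$ independent Exponential$(1)$ variables. Given $c_m\to\infty$, put $N_m:=\lfloor\exp[(d!\,m)^{1/d}-\gamma+\sqrt{c_m}]\rfloor$; \refP{P:time}(a) gives $\P(T_m>N_m)\to0$, and on $\{T_m\le N_m\}$ monotonicity gives $\tB_m^-\le B^+_{N_m}(1)$. Since the threshold $(d!\,m)^{1/d}+c_m$ equals $\L N_m+h_m$ with $h_m=c_m-\sqrt{c_m}+\gamma+o(1)\to\infty$, the elementary bound $\P(B^+_N(1)\ge\L N+h)\le N e^{-\L N-h}=e^{-h}$ then gives $\P(\tB_m^-\ge(d!\,m)^{1/d}+c_m)\le e^{-h_m}+o(1)\to0$. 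The third (convergence-in-probability) assertion of~(a) is immediate: apply the second with $c_m=\epsilon\L m$ and the first with $3\L_2 m=o(\L m)$ to obtain $\P(|\tB_m^--(d!\,m)^{1/d}|\ge\epsilon\L m)\to0$ for every $\epsilon>0$.

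For~(b), assume $d\ge5$. By \refP{P:time}(b2), almost surely $\L T_m=(d!\,m)^{1/d}-\gamma+o(1)$, whence $\L_2 T_m\sim d^{-1}\L m$ almost surely; by \refC{C:-}(b), $F^-_n-\L n=o(\L_2 n)$ almost surely as $n\to\infty$, so evaluating along the divergent (indeed $T_m\ge m$) subsequence $n=T_m$ gives $\tB_m^--\L T_m=o(\L_2 T_m)=o(\L m)$ almost surely; adding, $\tB_m^-=(d!\,m)^{1/d}+o(\L m)$ almost surely, which is the claim. The one genuinely delicate point is the first assertion of~(a) when $d=2$: \refP{P:time}(a2) supplies only a central limit theorem, so $\L T_m$ has real $\Theta(1)$ fluctuations about $(2m)^{1/2}-\gamma$; this forces the shift $c_m$ to grow and hence forces the use of the super-exponentially small bound of \refP{P:-lower} in place of merely \refT{T:-}(a). (For $d\ge3$ one may take $c_m$ to be a small positive constant, and \refT{T:-}(a) alone suffices.) Everything else is a routine translation through monotonicity and the switching relation.
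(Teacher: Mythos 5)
Your proof is correct and follows essentially the route the paper intends when it says ``part~(a) follows from \refT{T:-}(a) in much the same way that \refT{T:t+}(a) followed from \refT{T:+}(a)'' and ``part~(b) follows from \refC{C:-}(b)'' --- you have simply supplied the details the paper omits. The differences are minor and stylistic. For~(a) you invoke \refP{P:-lower} directly rather than \refT{T:-}(a) together with the flexibility provided by \refR{R:3}; since \refR{R:3} is precisely the observation that the proof of \refT{T:-}(c1) (which rests on \refP{P:-lower}) works for any coefficient exceeding~$2$, the two routes are the same argument. One small overstatement in your commentary: for $d=2$ you claim the $\Theta(1)$ CLT fluctuations of $\L T_m$ ``force'' the use of \refP{P:-lower} in place of \refT{T:-}(a). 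In fact \refT{T:-}(a) with a coefficient $c'\in(2,3)$ in place of~$3$ (exactly the flexibility of \refR{R:3}, which the paper cites for this purpose) absorbs a shift $c_m\to\infty$ with $c_m=o(\L_2 m)$ just as well, since $(3-c')\L_2 m$ eventually dominates $c_m+\gamma+O(1)$; your route is cleaner but not strictly necessary. For~(b) your argument is actually shorter than the analogue the paper gestures at: because \refC{C:-}(b) gives a genuine limit (not just $\liminf\ne\limsup$ as in \refC{C:+}(b)), you can simply evaluate along the a.s.\ divergent random subsequence $T_m$ and add $\L T_m-(d!\,m)^{1/d}=-\gamma+o(1)$ from \refP{P:time}(b2), whereas for $\tB^+$ the paper needs the interval-by-interval monotonicity trick to separate $\liminf$ from $\limsup$. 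Your version correctly identifies that this extra machinery is unnecessary for $\tB^-$.
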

\smallskip

\begin{proof}
(a)~Recalling \refR{R:3} to provide some flexibility, part~(a) follows from \refT{T:-}(a) in much the same way that \refT{T:t+}(a) followed from \refT{T:+}(a) [and \refC{C:+}(a)].  In the interest of brevity, we omit the routine details.

(b)~In the same way that \refT{T:t+}(b) followed from \refC{C:+}(b), so part~(b) follows from \refC{C:-}(b).
\end{proof}

We come finally to our main focus of this section, the process~$\tW$.
\medskip

\begin{theorem}
\label{T:tW}
Consider the process $\tW$ defined by $\tW_m := W_{T_m}$.
\medskip

{\rm (a)~\underline{Typical behavior of $\tW$}:\ }For every $d \geq 1$ we have
\[
\frac{\tW_m}{\L m} \Pto 1 - d^{-1}.
\]

{\rm (b)~\underline{Almost sure behavior for~$\tW$}:\ }If $d \geq 2$, then
\[
\liminf \frac{\tW_m}{\L m} = 1 - d^{-1} < 1 =
\limsup \frac{\tW_m}{\L m} \mbox{\rm \ \as}
\]
and, in particular,
\[
\tW_m = \Theta(\L m) \mbox{\rm \ \as}
\]
\end{theorem}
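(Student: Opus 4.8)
The plan is to obtain \refT{T:tW} from results already established, using the decomposition $\tW_m = \tB^+_m - \tB^-_m$ for part~(a) and a direct transfer of the observations-time statement \refT{T:W}(b) across the time change for part~(b). The one point worth flagging at the outset is that for part~(b) we will use only the \emph{weak} time-change estimate \refP{P:time}(b1), valid for every $d \geq 1$; subtracting the records-time statements \refT{T:t+}(b2) and \refT{T:t-}(b) instead would force the restriction $d \geq 5$, whereas the claim is for all $d \geq 2$. So the real work is organizing part~(b) so that the only almost sure input about the $T_m$ is \refP{P:time}(b1).

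\emph{Part~(a).} If $d = 1$ there is nothing to prove: then $W_n \equiv 0$ (see the remark following \refT{T:W}), so $\tW_m \equiv 0$, while $1 - d^{-1} = 0$. For $d \geq 2$ I would simply subtract: \refT{T:t+}(a1) gives $(\tB^+_m - (d!\,m)^{1/d})/\L m \Pto 1 - d^{-1}$ and \refT{T:t-}(a) gives $(\tB^-_m - (d!\,m)^{1/d})/\L m \Pto 0$, and since $\tW_m = \tB^+_m - \tB^-_m$ is a difference of two functions of the same index $m$, forming the difference yields $\tW_m/\L m \Pto (1 - d^{-1}) - 0 = 1 - d^{-1}$.

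\emph{Part~(b).} Assume $d \geq 2$. The structural observation driving the argument is that $\mbox{RS}_n$, hence $F_n$, hence $W_n$, changes only at the record epochs: if $X^{(n+1)}$ is not a record it is dominated by some current record, so $\mbox{RS}_{n+1} = \mbox{RS}_n$. Thus $W_n$ is \emph{exactly constant}, with value $\tW_m := W_{T_m}$, on each block $\{T_m, T_m+1, \dots, T_{m+1}-1\}$, and since $\L_2 n$ is nondecreasing the ratio $W_n/\L_2 n = \tW_m/\L_2 n$ attains its maximum over the block at $n = T_m$ and its minimum at $n = T_{m+1}-1$. As $R_n \to \infty$ almost surely the blocks exhaust $\{n \geq 1\}$, so the usual partition-into-blocks bookkeeping gives
\[
\limsup_{n \to \infty} \frac{W_n}{\L_2 n} = \limsup_{m \to \infty} \frac{\tW_m}{\L_2 T_m}
\qquad \mbox{and} \qquad
\liminf_{n \to \infty} \frac{W_n}{\L_2 n} = \liminf_{m \to \infty} \frac{\tW_m}{\L_2 (T_{m+1} - 1)}.
\]
Next I would invoke \refP{P:time}(b1): almost surely $\L T_m \sim (d!\,m)^{1/d}$, hence (taking one more logarithm) $\L_2 T_m \sim d^{-1} \L m$, and by monotonicity of $\L_2$ sandwiched between $\L_2 T_m$ and $\L_2 T_{m+1}$ also $\L_2(T_{m+1}-1) \sim d^{-1} \L m$ almost surely. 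Substituting these equivalences, and the values $\limsup_n W_n/\L_2 n = d$ and $\liminf_n W_n/\L_2 n = d-1$ from \refT{T:W}(b), gives almost surely $\limsup_m \tW_m/\L m = 1$ and $\liminf_m \tW_m/\L m = 1 - d^{-1}$. Since $d \geq 2$ is finite, $1 - d^{-1} < 1$, which is the first display of part~(b); and $\tW_m = \Theta(\L m)$ almost surely follows because the $\liminf$ is strictly positive and the $\limsup$ finite.

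The only things needing genuine care are routine: checking that $W$ is constant between consecutive record epochs (a one-line consequence of $\mbox{RS}_{n+1} = \mbox{RS}_n$ when $X^{(n+1)}$ is not a record) and the elementary block bookkeeping that converts $\limsup$/$\liminf$ over $n$ into $\limsup$/$\liminf$ over $m$. This bookkeeping is in fact simpler here than in the proof of \refT{T:t+}(b2), because $W_n/\L_2 n$ is exactly constant on each block rather than merely monotone up to $o(1)$ errors, so I do not anticipate a real obstacle. The substantive content is entirely borrowed from \refT{T:W}(b); the small but essential point — the ``hard part'', such as it is — is recognizing that \refP{P:time}(b1) alone suffices to carry it over, which is what allows the almost sure statement to hold down to $d = 2$ rather than only $d \geq 5$.
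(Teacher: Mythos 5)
Your proof is correct, and for part~(b) it takes a genuinely different (and arguably cleaner) route than the paper. The paper proves part~(b) by re-opening the proofs of Theorems~\ref{T:t+}(b2) and~\ref{T:t-}(b), observing that the only place $d \geq 5$ enters is the application of \refP{P:time}(b2), and then extracting the intermediate limsup/liminf identities for $(\tB^\pm_m - \L T_m)/\L m$ and $(\tB^\pm_m - \L T_{m+1})/\L m$ [displays~\eqref{tB+}--\eqref{tB-}] so that upon subtraction the $\L T_m$, $\L T_{m+1}$ offsets cancel and only \refP{P:time}(b1) is needed for the denominators. You instead transfer \refT{T:W}(b) to records-time in one step by noting that $\mbox{RS}_n$ (hence $W_n$) is \emph{exactly} constant on each block $[T_m, T_{m+1})$, so $W_n/\L_2 n$ is monotone decreasing across the block with extremes at $T_m$ and $T_{m+1}-1$; then the only almost-sure input about the time change is $\L_2 T_m \sim \L_2(T_{m+1}-1) \sim d^{-1}\L m$, which follows from \refP{P:time}(b1) alone. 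Your route buys a shorter, more self-contained argument for part~(b) that bypasses \refT{T:t+}(b2) and \refT{T:t-}(b) entirely, and it makes the key insight more transparent --- because you work with $W_n$ directly (constant on blocks, with only $\L_2 n$ varying) rather than with $F^\pm_n - \L n$ (where both $\L n$ and $\L_2 n$ vary), the cancellation that in the paper requires explicit bookkeeping with $\L T_m$ and $\L T_{m+1}$ happens automatically. Both approaches share the same essential observation: only \refP{P:time}(b1), not~(b2), is what the width needs. Part~(a) is handled identically in both.
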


\begin{proof}
Part~(a), and part~(b) for $d \geq 5$, follow immediately by subtraction from the two preceding theorems about 
$\tB^+$ and $\tB^-$ [and by the triviality of part~(a) for $d = 1$].  We next present an argument that establishes part~(b) for \emph{all} $d \geq 2$.

In the proofs of Theorems~\ref{T:t+}(b) and \ref{T:t-}(b), the only use of the assumption $d \geq 5$ is in the application of \refP{P:time}(b2).  From the computations prior to the application together with application of 
\refP{P:time}(b1) for the denominators, we almost surely have
\begin{align}
\label{tB+}
\limsup_{m \to \infty} \frac{\tB^+_m - \L T_m}{\L m} &= 1, \qquad
\liminf_{m \to \infty} \frac{\tB^+_m - \L T_{m + 1}}{\L m} = 1 - d^{-1}, \\
\nonumber
\limsup_{m \to \infty} \frac{\tB^-_m - \L T_m}{\L m} &= 0, \qquad
\liminf_{m \to \infty} \frac{\tB^-_m - \L T_{m + 1}}{\L m} = 0.
\end{align}
From the two results here about $\tB^-$, it follows quickly using the monotonicity of the paths of $F^-$ that \as
\begin{equation}
\label{tB-}
\lim_{m \to \infty} \frac{\tB^-_m - \L T_m}{\L m} = 0, \qquad  
\lim_{m \to \infty} \frac{\tB^-_m - \L T_{m + 1}}{\L m} = 0.
\end{equation}
Now subtract the equations in~\eqref{tB-} from the corresponding equations in~\eqref{tB+} to complete the proof of part~(b).
 \end{proof}

\begin{remark}
\label{R:last}
(a)~Using \refR{R:interval t+}, for $d \geq 5$ \refT{T:tW}(b) can 
be strengthened to the conclusion that the set of limit points of the sequence $\tW_m / \L m$ is almost surely the closed interval $[1 - d^{-1}, 1]$.  We have not investigated whether this result can be extended to $d = 2, 3, 4$.

(b)~Equation~\eqref{tB-} has 
the independently interesting corollary that
\begin{equation}
\label{LTm difference}
\L T_{m + 1} - \L T_m = o(\L m)\mbox{\ \as}
\end{equation}
for $d \geq 2$.  For $d = 1$,  it follows from the last sentence in \cite[Sec.~2.5]{Arnold(1998)} that
\[
\L T_{m + 1} - \L T_m = O((m \L_2 m)^{1/2}).
\]

For $d \geq 5$ we know the stronger [than~\eqref{LTm difference}] result
\[
L T_{m + 1} - \L T_m  = o(1) \mbox{\ \as}
\]
from \refP{P:time}(b2).  Even stronger results are available for larger values of~$d$.  For example, if 
$d \geq 9$ (so that $d - 2 > \frac{3}{4} d$), then the proof of \refP{P:time}(b2) can be extended to yield
\begin{equation}
\label{stronger LTm}
\L T_m = (d! m)^{1/d} - \gamma + (1 + o(1)) c_d m^{-1/d} \mbox{\rm \ \as}
\end{equation}
for a constant $c_d$ that can be computed explicitly.  Then~\eqref{stronger LTm} implies
\[
\L T_{m + 1} - \L T_m  = O(m^{-1/d})\mbox{\rm \ \as}
\]
\end{remark}

\begin{acks}
We thank Vince Lyzinski and Fred Torcaso for helpful comments.
\end{acks}

\bibliography{records}
\bibliographystyle{plain}

\end{document}